\providecommand{\tabularnewline}{\\}
\numberwithin{equation}{section}
\numberwithin{figure}{section}
\theoremstyle{plain}
\newtheorem{thm}{\protect\theoremname}
\theoremstyle{plain}
\newtheorem{prop}[thm]{\protect\propositionname}
\theoremstyle{remark}
\newtheorem*{rem*}{\protect\remarkname}
\theoremstyle{plain}
\newtheorem{lem}[thm]{\protect\lemmaname}
\tikzset{
  reversed with radius/.style={
    x radius=#1,
    y radius=-#1,
 }
}
\tikzset{
  with arrows/.style={
    decoration={ markings,
      mark=between positions #1 and .999 step #1 with {\arrow{stealth}}
    }, postaction={decorate}
  }, with arrows/.default=25mm,
} 
\providecommand{\tabularnewline}{\\}
\providecommand{\lemmaname}{Lemma}
\providecommand{\propositionname}{Proposition}
\providecommand{\theoremname}{Theorem}
\newcommand{\abs}[1]{\ensuremath{|#1|}}
\newcommand{\Abs}[1]{\ensuremath{\left|#1\right|}}
\newcommand{\norm}[2]{\ensuremath{|\!|#1|\!|_{#2}}}
\newcommand{\Norm}[2]{\ensuremath{\left|\!\left|#1\right|\!\right|_{#2}}}
\renewcommand{\d}[1]{\ensuremath{\textnormal{d}#1}}
\newcommand{\cC}{\mathcal{C}}
\newcommand{\cM}{\mathcal{M}}
\newcommand{\cO}{\mathcal{O}}
\newcommand{\cS}{\mathcal{S}}
\providecommand{\lemmaname}{Lemma}
  \providecommand{\propositionname}{Proposition}
  \providecommand{\remarkname}{Remark}
\providecommand{\theoremname}{Theorem}
\providecommand{\lemmaname}{Lemma}
  \providecommand{\propositionname}{Proposition}
\providecommand{\theoremname}{Theorem}
\providecommand{\lemmaname}{Lemma}
\providecommand{\propositionname}{Proposition}
\providecommand{\remarkname}{Remark}
\providecommand{\theoremname}{Theorem}
\providecommand{\lemmaname}{Lemma}
\providecommand{\propositionname}{Proposition}
\providecommand{\remarkname}{Remark}
\providecommand{\theoremname}{Theorem}
\providecommand{\lemmaname}{Lemma}
\providecommand{\propositionname}{Proposition}
\providecommand{\remarkname}{Remark}
\providecommand{\theoremname}{Theorem}
\providecommand{\lemmaname}{Lemma}
\providecommand{\propositionname}{Proposition}
\providecommand{\remarkname}{Remark}
\providecommand{\theoremname}{Theorem}
\providecommand{\lemmaname}{Lemma}
\providecommand{\propositionname}{Proposition}
\providecommand{\remarkname}{Remark}
\providecommand{\theoremname}{Theorem}
\begin{document}

\title{On the Fourier coefficients of powers of a Blaschke factor and strongly
annular fonctions}

\author{Alexander Borichev}

\address{Aix-Marseille University, CNRS, Centrale Marseille, I2M, Marseille,
France.}

\email{alexander.borichev@math.cnrs.fr}

\author{Karine Fouchet}

\address{Aix-Marseille University, CNRS, Centrale Marseille, I2M, Marseille,
France.}

\email{karine.isambard@univ-amu.fr }

\author{Rachid Zarouf}

\address{Aix-Marseille University, Laboratoire Apprentissage, Didactique,
Evaluation, Formation, Campus Universitaire de Saint-Jérôme, 52 Avenue
Escadrille Normandie Niemen, 13013 Marseille}

\email{rachid.zarouf@univ-amu.fr}

\address{Department of Mathematics and Mechanics, Saint Petersburg State University,
28, Universitetski pr., St. Petersburg, 198504, Russia.}

\email{rzarouf@gmail.com}

\keywords{Fourier coefficients, powers of a Blaschke factor, strongly annular
functions, method of the stationary phase, method of the steepest
descent.}

\subjclass[2000]{30J10, 41A60, 42A16.}

\thanks{The work is supported by the project ANR 18-CE40-0035}
\begin{abstract}
We compute asymptotic formulas for the $k^{{\rm th}}$ Fourier coefficients
of $b_{\lambda}^{n}$, where $b_{\lambda}(z)=\frac{z-\lambda}{1-\lambda z}$
is the Blaschke factor associated to $\lambda\in\mathbb{D}$, $k\in[0,\infty)$
and $n$ is a large integer. We distinguish several regions of different
asymptotic behavior of those coefficients in terms of $k$ and $n$.
Given $\beta\in((1-\lambda)/(1+\lambda),(1+\lambda)/(1-\lambda))$
their decay is oscillatory for $k\in[\beta n,n/\beta]$. Given $\alpha\in(0,(1-\lambda)/(1+\lambda))$
their decay is exponential for $k\in[0,n\alpha]\cup[n/\alpha,\infty).$
Airy-type behavior is happening near the $k$-transition points $n(1-\lambda)/(1+\lambda)$
and $n(1+\lambda)/(1-\lambda)$. The asymptotic formulas for the $k^{{\rm th}}$
Fourier coefficients of $b_{\lambda}^{n}$ are derived using standard
tools of asymptotic analysis of Laplace-type integrals. More precisely,
the integral defining the $k^{{\rm th}}$ Fourier coefficient of $b_{\lambda}^{n}$
is perfectly suited for an application of the method of stationary
phase when $k\in\left(n(1-\lambda)/(1+\lambda),n(1+\lambda)/(1-\lambda)\right)$
and requires the use of the method of the steepest descent when $k\notin[n(1-\lambda)/(1+\lambda),n(1+\lambda)/(1-\lambda)]$.
Uniform versions of those standard methods are required when $k$
approaches one of the boundaries $n(1-\lambda)/(1+\lambda),$ $n(1+\lambda)/(1-\lambda)$.
As an application, we construct strongly annular functions with Taylor
coefficients satisfying sharp summation properties. 
\end{abstract}

\maketitle

\specialsection{Introduction}

\subsection{Notation}

Let $\mathbb{D}=\{z:\:\:|z|<1\}$ be the open unit disk and $\partial\mathbb{D}$
its boundary. For a given $\lambda\in\mathbb{D}$ we denote by 
\[
b_{\lambda}(z)=\frac{z-\lambda}{1-\bar{\lambda}z},
\]
the Blaschke factor corresponding to $\lambda\in\mathbb{D}$. It is
well-known that the function $b_{\lambda}$ is an automorphism of
$\mathbb{D}$ and that $\abs{b_{\lambda}(z)}=1\iff z\in\partial\mathbb{D}$.
Given a nonnegative integer $n$ we recall the definition of the $k^{{\rm th}}-$Taylor/Fourier
coefficient of the $n^{{\rm th}}-$power of $b_{\lambda}$: 
\[
\frac{(b_{\lambda}^{n})^{(k)}(0)}{k!}=\widehat{b_{\lambda}^{n}}(k)=\frac{1}{2i\pi}\int_{\partial\mathbb{D}}b_{\lambda}^{n}(z)z^{-k}\frac{\d z}{z},
\]
whose asymptotic behavior we wish to determine as $n\rightarrow\infty$.
Let $b_{\lambda}^{n}(z)=\sum_{k\geq0}\widehat{b_{\lambda}^{n}}(k)z^{k}$
be the Taylor expansion of $b_{\lambda}^{n}$. Then we have 
\begin{align*}
\left(\frac{z-\lambda}{1-\bar{\lambda}z}\right)^{n} & =e^{in\theta}\left(\frac{ze^{-i\theta}-\abs{\lambda}}{1-\abs{\lambda}ze^{-i\theta}}\right)^{n}\\
 & =\sum_{k\geq0}\widehat{b_{\abs{\lambda}}^{n}}(k)e^{i(n-k)\theta}z^{k},
\end{align*}
with $\theta=\arg\lambda,$ which shows that $\widehat{b_{\lambda}^{n}}(k)=\widehat{b_{\abs{\lambda}}^{n}}(k)e^{i(n-k)\theta}$.
Therefore, without loss of generality we assume from now on that $\lambda\in(0,1)$.
In this article we compute asymptotic formulas for $\widehat{b_{\lambda}^{n}}(k)$
as $n\rightarrow\infty$, when $k\in[0,\infty)$. Furthermore, we
apply these asymptotic formulas to construct strongly annular functions
with small Taylor coefficients.

\subsection{Motivations}

Various motivations have led to study the asymptotic behavior of $\widehat{b_{\lambda}^{n}}(k)$
in the limit of large $n$. We begin by mentioning a line of research
in which the question of estimating $l^{p}$ norms of the sequence
$\left(\widehat{b_{\lambda}^{n}}(k)\right)_{k\geq0}$ plays a central
role, see Subsection \ref{subsec:l_p-norms-of-} below. Another motivation,
described in Subsection \ref{subsec:Strongly-annular-fonctions} is
the construction of so called strongly annular functions with specific
decay of the Taylor coefficients.

\subsubsection{\label{subsec:l_p-norms-of-} $l^{p}$ norms of $\widehat{b_{\lambda}^{n}}$
for $p\in[1,\infty]$ and related topics}

$\:$

We use standard notation from asymptotic analysis: From now on,
for two positive functions $f,g\::\:\mathbb{C}\rightarrow\mathbb{R}^{+}$
we say that $f$ is dominated by $g$, denoted by $f\lesssim g$,
if there is a constant $c>0$ such that $f\leq cg$. We say that $f$
and $g$ are comparable, denoted by $f\asymp g$, if both $f\lesssim g$
and $g\lesssim f$. 
\begin{enumerate}
\item The study of the $l^{p}$ norms of $\widehat{b_{\lambda}^{n}}$ was
probably initiated by J.-P.~Kahane~\cite{Kah} who was interested
in the case $p=1$. He applied van der Corput type estimates on $\widehat{b_{\lambda}^{n}}(k)$
\cite[p. 253]{Kah} to get information on the asymptotic behavior
of the $l^{1}$ norm of $\widehat{b_{\lambda}^{n}}$ 
\[
\norm{\widehat{b_{\lambda}^{n}}}{1}:=\sum_{k\geq0}\abs{\widehat{b_{\lambda}^{n}}(k)}.
\]
Kahane's motivation \cite[Theorem 1]{Kah} was to generalize a theorem
by~Z. K. Leibenson \cite{Leib}, which is a special case of a theorem~\cite[Theorem 4.1.3]{Rud}
about homomorphisms of group algebras due to P.~T.~Cohen. Let $\varphi:\:\mathbb{R}\rightarrow\mathbb{R}$
be a continuous, non-constant and $2\pi$-periodic function. A.~Beurling
and H.~Helson~\cite{BeHe} proved that if $\norm{\widehat{e^{in\varphi}}}{1}=\cO\left(1\right),$
$n\in\mathbb{Z}$, then $\varphi$ is affine. Kahane proved that: 
\begin{enumerate}
\item If $\varphi$ is piecewise linear, then $\norm{\widehat{e^{in\varphi}}}{1}=\cO\left(\log(\abs{n})\right)$,~\cite[Theorem III]{Kah}
and\emph{ } 
\item if $\varphi$ is analytic, then $\norm{\widehat{e^{in\varphi}}}{1}\asymp\sqrt{\abs{n}}$,
\cite[Theorem V]{Kah}. 
\end{enumerate}
Writing $b_{\lambda}(e^{it})$ as $e^{i\varphi(t)}$ for $t\in\left(-\pi,\,\pi\right]$, we deduce from (b) that 
\[
\norm{\widehat{b_{\lambda}^{n}}}{1}\sim c_{1}\sqrt{n},\qquad n\rightarrow\infty.
\]
The precise value $c_{1}$ of the limit 
\[
\lim_{n\rightarrow\infty}n^{-1/2}\norm{\widehat{b_{\lambda}^{n}}}{1}
\]
was computed in~\cite{Gir}. A discussion on $l^{p}$ norms for $p\in[1,\infty]$
occurred in~\cite{BlSh}, where the asymptotic behavior 
\begin{equation}
\norm{\widehat{b_{\lambda}^{n}}}{p}\asymp n^{\frac{2-p}{2p}}\ \textnormal{for}\ p\in[1,2]\label{eq:BS}
\end{equation}
is derived. The discussion in~\cite{BlSh} is more general and motivated
by investigating the boundedness of the composition operator $C_{b}$, $C_{b}(f)=f\circ b$, 
where $b=b_{\lambda}$. To assess whether $C_{b}$ is a bounded linear
operator from one Banach space $X$ of analytic functions into another,
say $Y$, it is often enough to know the asymptotic behavior of $\Norm{b_{\lambda}^{n}}{Y}$.\\
 Let us mention that the asymptotic formulas for $\widehat{b_{\lambda}^{n}}(k)$
obtained in the present paper could be used to compute the exact values
of $c_{p}$ defined as follows: 
\begin{enumerate}
\item If $p\in(0,4)$, then 
\[
\lim_{n\rightarrow\infty}n^{-\frac{2-p}{2p}}\norm{\widehat{b_{\lambda}^{n}}}{p}=c_{p},
\]
\item if $p=4$, then 
\[
\lim_{n\rightarrow\infty}\left(\frac{n}{\log n}\right)^{1/4}\norm{\widehat{b_{\lambda}^{n}}}{4}=c_{4},
\]
\item and if $p\in(4,\infty]$, then 
\[
\lim_{n\rightarrow\infty}n^{-\frac{1-p}{3p}}\norm{\widehat{b_{\lambda}^{n}}}{p}=c_{p},
\]
\end{enumerate}
which generalizes Girard's result \cite{Gir} and strengthens \cite[Theorem 1]{SzZa1}.
The constants $c_{p}$ are not studied in this article; their computations
are part of a forthcoming work. 

\item O. Szehr and R. Zarouf proved upper and lower bounds on $\abs{\widehat{b_{\lambda}^{n}}(k)}$
\cite{SzZa1} to complete the result of M. Blyudze and S. Shimorin \eqref{eq:BS}
on $l^{p}$ norms of the sequence $\widehat{b_{\lambda}^{n}},$ extending
\eqref{eq:BS} to the range $p\in[1,4)$ and providing sharp estimates
on $\norm{\widehat{b_{\lambda}^{n}}}{p}$ for the remaining range
$p\in[4,\infty]$. Later on, Szehr and Zarouf \cite[Proposition 2]{SzZa1}
applied those results to estimate analytic capacities in Beurling--Sobolev
spaces. Finally, the same authors \cite{SzZa2,SzZa4} proved upper
bounds on $\abs{\widehat{(1-z^{2})b_{\lambda}^{n}}(k)}$ to construct
a class of counterexamples to Schäffer's conjecture on optimal estimates
for norms of inverses of matrices \cite{Sch,GMP,Que1,Que2}. \\
 Namely, in 1970 J.J.~Schäffer \cite[Theorem 3.8]{Sch} proved that
for any invertible $n\times n$ matrix $T$ and for any operator norm
$\Norm{\cdot}{}$ the inequality 
\[
\abs{\det{T}}\Norm{T^{-1}}{}\leq\mathcal{S}\Norm{T}{}^{n-1}
\]
holds with $\cS=\cS(n)\leq\sqrt{en}$. He conjectured that in fact
this inequality holds with an $\cS$ independent of $n$. This conjecture
was refuted in the early 1990-s by E.~Gluskin, M.~Meyer and A.~Pajor
\cite{GMP} who have shown that for certain $T=T(n)$ the inequality
can only hold when $\cS$ is growing with $n$. Subsequent contributions
of J.~Bourgain \cite{GMP} and H.~Queffélec \cite{Que1,Que2} provided
increasing lower estimates on $\cS$. The currently best known lower
estimate on $\cS$ is due to H.~Queffélec~\cite{Que2} : 
\[
\cS\geq\sqrt{n}(1-\cO(1/n)).
\]
Those results rely on probabilistic and number theoretic arguments.
The common point in the mentioned lower bounds is that they rely on
an inequality of Bourgain \cite[Inequality (2.2)]{SzZa4} that relates
Schäffer's problem to a geometric property of the spectrum of $T$:
For $\cS$ to grow the eigenvalues of $T$ should satisfy a Turán-type
power sum inequality. The construction of explicit solutions to such
inequalities appears to be a well-studied but open problem in number
theory~\cite{Tur,Mon,ErRe,And1,And2}. More precisely, Bourgain's
inequality relates Schäffer's question to Turán's tenth problem~\cite{And1,Tur}.
The latter has no constructive solution and relies on deep number-theoretic
existence arguments~\cite{And1,Mon,Que2}. In \cite{GMP} as well
as in \cite[question 5]{Que1} the construction of explicit matrices
with growing $\cS$ is formulated as an open problem. Constructive
counterexamples to Schäffer's conjecture are proposed in \cite{SzZa4}
where the authors present an explicit sequence of \textit{Toeplitz
matrices} $T_{\lambda}\in\cM_{n}$ with \emph{singleton spectrum}
$\{\lambda\}\in\mathbb{D}\setminus\{0\}$ such that 
\[
\cS\geq\abs{\lambda}^{n}\Norm{T_{\lambda}^{-1}}{}\geq c(\lambda)\sqrt{n}\Norm{T_{\lambda}}{}{}^{n-1},
\]
$c(\lambda)>0.$ The authors use a duality method to prove an analog
of Bourgain's inequality and thereby estimate $\Norm{T_{\lambda}^{-1}}{}$
from below. Their lower bound on $\Norm{T_{\lambda}^{-1}}{}$ involves
the $l^{\infty}$ norm of the sequence 
\[
\widehat{(1-z^{2})b_{\lambda}^{n}}(k)=\widehat{b_{\lambda}^{n}}(k)-\widehat{b_{\lambda}^{n}}(k-2),\qquad k\geq2.
\]
Better numerical estimates on $\abs{\det{T_{\lambda}}}\Norm{T_{\lambda}^{-1}}{}$
can be obtained by considering more elaborate test functions than
the simplest one they chose \cite[Remark 10 and Remark 15]{SzZa4}.
Exact asymptotic expansions for $\widehat{b_{\lambda}^{n}}(k)$ are
therefore of interest to derive numerical lower estimates on $\mathcal{S}/\sqrt{n}$
as $n\rightarrow\infty.$ \\
\end{enumerate}

\subsubsection{\label{subsec:Strongly-annular-fonctions}Strongly annular fonctions}

A function $f$ analytic in the unit disc is said to be annular if
there exists an embedded sequence of open domains $\Omega_{n}$, $\Omega_{n}\subset\Omega_{n+1}$,
$\overline{\Omega_{n}}\subset\mathbb{D}$, $n\ge1$ such that $\cup_{n\ge1}\Omega_{n}=\mathbb{D}$
and 
\[
\lim_{n\to\infty}\min_{\partial\Omega_{n}}|f|=\infty.
\]
Such a function $f$ does not belong to the Nevanlinna class, and,
in particular, it does not belong to the Hardy space $H^{2}$, that
is $\widehat{f}\not\in\ell^{2}$. The function $f$ is said to be
strongly annular if it is annular with $\Omega_{n}=\mathcal{D}(0,r_{n})$,
$r_{n}\to1$. The short book of Bonar \cite{Bon} dedicated to this
subject contains several constructions of such functions coming back,
in particular, to Lusin--Privalov, 1925, Paley, 1930, and Littlewood,
1944.

Let us also mention here some more recent results on strongly annular
functions. In 1997 Daquila \cite{Daq1} studied strongly annular solutions
of Mahler's functional equation and in 2010 he studied \cite{Daq2}
the density of such solutions in the space $\mathcal{H}(\mathbb{D})$
of the functions holomorphic in the unit disc. In 2007 Redett \cite{Red}
constructed strongly annular functions in standard Bergman spaces.
In 2013 Bernal--González--Bonilla \cite{BeBo} proved that the
set of the strongly annular functions is algebraically large (maximal
dense--lineable and algebrable in $\mathcal{H}(\mathbb{D})$). For
random strongly annular functions see, for example, \cite[Chapter 13, Theorem 7]{Kah1}
and \cite{How}.

\subsection{Known results}

\subsubsection{Estimates on $\widehat{b_{\lambda}^{n}}(k)$}

$\:$

Below we recall the known upper/lower bounds on $\widehat{b_{\lambda}^{n}}(k)$
as well as the known asymptotic formulas for these coefficients. 
\begin{enumerate}
\item D. D. Bonar, F. Carroll, and G. Piranian \cite[Theorem 1]{BCP} proved
that there exist positive numbers $A_{1}$ and $A_{2}$ such that
for all $k$ and $n$ the coefficients $\widehat{b_{1/2}^{n}}(k)$
satisfy the inequality 
\[
\abs{\widehat{b_{1/2}^{n}}(k)}\leq A_{1}n^{-1/3}
\]
and such that, for every nonnegative integer $j$, 
\[
\liminf_{n\rightarrow\infty}n^{1/3}\abs{\widehat{b_{1/2}^{n}}(3k+j)}>A_{2}.
\]
\item It is also shown in \cite[Theorem 2]{BCP} that if $k<n/3,$ then
\[
\abs{\widehat{b_{1/2}^{n}}(k)}\leq\frac{6}{\pi}\frac{1}{n-3k},
\]
and that if $k>3n,$ then 
\[
\abs{\widehat{b_{1/2}^{n}}(k)}\leq\frac{2}{\pi}\frac{1}{k-3n}.
\]
\item Szehr--Zarouf \cite[Proposition 2]{SzZa1} proved that if $\alpha\in(0,\alpha_{0})$,
$\alpha_{0}:=\frac{1-\lambda}{1+\lambda}$, then the following assertions
hold for large enough $n$. 
\begin{enumerate}
\item If $k/n\leq\alpha,$ then $\abs{\widehat{b_{\lambda}^{n}}(k)}$ decays
exponentially as $n$ tends to $\infty$, i.e. there exists $q\in(0,1)$
depending on $\alpha$ and $\lambda$ only such that 
\[
\abs{\widehat{b_{\lambda}^{n}}(k)}\leq q^{n}.
\]
Similarly, if $k/n\geq\alpha^{-1}$ then $\abs{\widehat{b_{\lambda}^{n}}(k)}$
decays exponentially as $n$ tends to $\infty$. 
\item If $k/n\in(\alpha,\alpha_{0}-n^{-2/3})\cup(\alpha_{0}^{-1}+n^{-2/3},\alpha^{-1})$
then 
\[
\abs{\widehat{b_{\lambda}^{n}}(k)}\lesssim\max\left\{ \frac{1}{\abs{\alpha_{0}n-k}},\frac{1}{\abs{\alpha_{0}^{-1}n-k}}\right\} .
\]
\item If $k/n\in[\alpha_{0}-n^{-2/3},\alpha_{0}+n^{-2/3})\cup(\alpha_{0}^{-1}-n^{-2/3},\alpha_{0}^{-1}+n^{-2/3}]$
then 
\[
\abs{\widehat{b_{\lambda}^{n}}(k)}\lesssim\frac{1}{n^{1/3}}.
\]
\item If $k/n\in(\alpha_{0}+n^{-2/3},\alpha_{0}^{-1}-n^{-2/3})$ then 
\[
\abs{\widehat{b_{\lambda}^{n}}(k)}\lesssim\max\left\{ \frac{1}{n^{1/2}\abs{\alpha_{0}-\frac{k}{n}}^{1/4}},\frac{1}{n^{1/2}\abs{\alpha_{0}^{-1}-\frac{k}{n}}^{1/4}}\right\} .
\]
\end{enumerate}
\item An asymptotic expansion of $\widehat{b_{\lambda}^{n}}(k)$ as $k$
and $n$ tend simultaneously to $\infty$ and $k$ approaches the
right boundary of $[\alpha_{0}n,\,\alpha_{0}^{-1}n]$ from inside,
i.e. $\lim_{n\rightarrow\infty}\left(\alpha_{0}^{-1}-k/n\right)=0^{+},$
is computed in \cite[Proposition 6]{SzZa1}. In this region the asymptotic
behavior of $\widehat{b_{\lambda}^{n}}(k)$ can be written in terms
of the Airy function $Ai(x)$. For real arguments the latter can be
defined as an improper Riemann integral 
\[
Ai(x)=\frac{1}{\pi}\int_{0}^{\infty}\cos\left(\frac{t^{3}}{3}+xt\right)\d t.
\]
The authors in \cite{SzZa1} were interested in the oscillatory behavior
of $Ai$ for large negative arguments for which we have the asymptotic
approximation: 
\begin{equation}
Ai(-x)\sim\frac{1}{x^{1/4}\sqrt{\pi}}\cos\left(\frac{2}{3}x^{3/2}-\frac{\pi}{4}\right),\ x\rightarrow+\infty.\label{eq:osc_beh_Airy}
\end{equation}
More precisely it is shown in \cite[Proposition 6]{SzZa1} (making
use of a uniform version of the method of stationary phase, see, for example, \cite[Section 2.3]{Bor})
that for sequences $k=k(n)$ with $k\in[\alpha_{0}n,\alpha_{0}^{-1}n]$
such that $\lim_{n\rightarrow\infty}\frac{k}{n}=\alpha_{0}^{-1}$,
the following asymptotic formula holds as $n\rightarrow\infty$ 
\[
\widehat{b_{\lambda}^{n}}(k)\sim\frac{(1-\lambda)^{1/4}}{\left(\lambda(1+\lambda)\right)^{1/12}}\frac{\sqrt{2}}{\sqrt{k/n}\left(k/n-\alpha_{0}\right)^{1/4}}\frac{Ai(n^{2/3}\gamma^{2})}{n^{1/3}},
\]
where 
\[
\gamma^{2}=\gamma_{\alpha_{0}^{-1}}^{2}\sim\frac{1-\lambda}{\left(\lambda(1+\lambda)\right)^{1/3}}\left(k/n-\alpha_{0}^{-1}\right).
\]
We will see that the above asymptotic formula for $\widehat{b_{\lambda}^{n}}(k)$
remains valid also when $k/n$ approaches $\alpha_{0}^{-1}$ from
outside of the compact interval $[\alpha_{0},\alpha_{0}^{-1}]$, see
below~Theorem~\ref{Th_Regions_I_VII}~(4). When $k/n>\alpha_{0}^{-1}$
and $\lim_{n\rightarrow\infty}\frac{k}{n}=\alpha_{0}^{-1}$ we will
use the fact that the Airy function has exponential asymptotics for
large positive arguments 
\begin{equation}
Ai(x)\sim\frac{1}{2x^{1/4}\sqrt{\pi}}\exp\left(-\frac{2}{3}x^{3/2}\right),\ x\rightarrow+\infty.\label{eq:exp_beh_Airy}
\end{equation}
Let us finally mention that in what follows, Theorem~\ref{Th_Regions_I_VII}~(4) and Theorem \ref{Th:Regions_IV_V_VI}~(1),~(2), show a similar asymptotic formula for $\widehat{b_{\lambda}^{n}}(k)$
as $k/n$ approaches the left boundary $\alpha_{0}$ (both from the
left and the right). 
\end{enumerate}

\subsubsection{Strongly annular fonctions }

Most of the known examples of strongly annular functions involve lacunary
series. Frequently, the Taylor coefficients of the functions in such
examples are unbounded. That is why Bonar asked in \cite[Question 6.9]{Bon}
whether every strongly annular function is a sum of a bounded function
and the sum of a lacunary Taylor series. In 1977 Bonar, Carroll, and
Piranian \cite{BCP} constructed a strongly annular function $f$
such that $\lim_{n\to\infty}\widehat{f}(n)=0$ and 
\[
\sum_{n\ge0}\min(|\widehat{f}(2n)|^{2},|\widehat{f}(2n+1)|^{2})=\infty.
\]
In other words, if $s_{k}$ are positive integers, $s_{k+1}>s_{k}+1$,
then 
\[
\sum_{n\ge0,\,n\not\in(s_{k})}\widehat{f}(n)z^{n}\not\in H^{2}.
\]
This construction was based on the above mentioned estimates of the
asymptotics of the Taylor coefficients of $b_{\lambda}^{n}$.

Another construction of strongly annular functions whose Taylor coefficients
tend to $0$ was given by Bonar, Carroll, and Erdös in \cite{BCE}.

\subsection{\label{subsec:Goals-of-the}Goals of the paper}

\subsubsection{\label{subsec:Asymptotic-analysis-of}Asymptotic analysis of $\widehat{b_{\lambda}^{n}}(k)$
as $n\rightarrow\infty$}

$\:$

The first goal of this paper is to state all asymptotic formulas for
$\widehat{b_{\lambda}^{n}}(k)$ as $n\rightarrow\infty$ depending
on the region to which $k=k(n)\in[0,\infty)$ belongs. The above mentioned
upper bounds on $\widehat{b_{\lambda}^{n}}(k)$ are usually based
on van der Corput type estimates, and the standard \textit{Laplace-type
methods} which we describe below, will be used to derive exact asymptotic
formulas for $\widehat{b_{\lambda}^{n}}(k)$ as $n\rightarrow\infty$.
We write the integral defining $\widehat{b_{\lambda}^{n}}(k)$ in
a way that is convenient for asymptotic analysis: 
\begin{equation}
\widehat{b_{\lambda}^{n}}(k)=\frac{1}{2i\pi}\int_{\partial\mathbb{D}}e^{n\Phi(z)}\frac{\d z}{z}\label{eq:integral}
\end{equation}
(the so-called complex Laplace-type integral) where 
\begin{equation}
\Phi(z)=\Phi_{k/n}(z)=\log\left(z^{-\frac{k}{n}}b_{\lambda}(z)\right),\label{Phi}
\end{equation}
and $\log$ denotes a branch of the complex logarithm chosen in the following way: 
if $k/n\le c<\alpha^{-1}_{0}$, then we can take the branch cut $[0,\infty)$ and fix $\log(-1)=i\pi$, and if 
$k/n\ge c>\alpha_{0}$, then we can take the principal branch of the complex logarithm. 
In particular, if $\alpha_{0}<c_1\le k/n\le c_2<\alpha^{-1}_{0}$, then we could take either of these two 
definitions.
 The asymptotic behavior of this integral is studied using standard
tools of asymptotic analysis: the \textit{method of stationary phase}
\cite{Erd,Fed1,Fed2,Bor} or the\textit{ method of the steepest descent}
\cite{BlHa,Bru,Cop,Tem}, depending on the location of the critical
points of $\Phi$. To apply the method of stationary phase we need
to introduce the real function 
\begin{equation}
h(\varphi)=h_{k/n}(\varphi):=-i\Phi_{k/n}(e^{i\varphi})=\arg\left(\left(z^{-k/n}b_{\lambda}(z)\right)_{\vert z=e^{i\varphi}}\right)\qquad\varphi\in[0,\pi],\label{eq:h}
\end{equation}
observing that $\Abs{z^{-k/n}b_{\lambda}(z)}=1$ for $z\in\partial\mathbb{D}$,
so that 
\begin{align}
\widehat{b_{\lambda}^{n}}(k)=\frac{1}{\pi}\Re{\left\{ \int_{0}^{\pi}e^{inh_{k/n}(\varphi)}\d\varphi\right\} .}\label{eq:int_stat_phase}
\end{align}
As usual, the dominant contribution to integrals of the form~\eqref{eq:integral}
(respectively \eqref{eq:int_stat_phase}) comes from a small neighborhood
around the stationary points of $\Phi$ (respectively $h$). We refer
to Lemma \ref{lem:critical_pts_f} below for an identification of
the critical points of $\Phi$ which we denote by $z_{\pm}$, see
also \cite[Section 6]{SzZa2}. It turns out that when $a=k/n\in[\alpha_{0},\alpha_{0}^{-1}]$
we have $z_{\pm}\in\partial\mathbb{D}$ and the integral \eqref{eq:integral}
is especially suited for an application of the method of stationary
phase \cite{Erd,Fed1,Fed2}, whereas if $a=k/n\notin[\alpha_{0},\alpha_{0}^{-1}]$,
then $z_{\pm}\in\mathbb{R}$ and this method fails. In this case, a
deformation of the contour $\partial\mathbb{D}$ will be required
in order to apply the method of the steepest descent. As $k/n$ approaches
one of the boundaries $\alpha_{0}$ or $\alpha_{0}^{-1}$, uniform
versions of these methods \cite[Section 2.3]{Bor} \cite[Section 9.2]{BlHa}
\cite[p. 366--372]{Won} (all of them being based on \cite{CFU})
will be required, see Proposition \ref{Prop_Airy_fcts} below. A summary
of the asymptotics of $\widehat{b_{\lambda}^{n}}(k)$ is provided in Figure~\ref{Table}
below, depending on $k$. The asymptotic formulas for $\widehat{b_{\lambda}^{n}}(k)$
are discussed in full detail in Section \ref{sec:Asymptotic-formulas-for},
see Theorem \ref{Th_Regions_I_VII} and Theorem \ref{Th:Regions_IV_V_VI}.

\subsubsection{\textcolor{blue}{\label{subsec:SAF_content_of_the_paper}} Strongly
annular functions}

Using the ideas from \cite{BCP} and \cite{BCE} and estimates on
the asymptotics of $\widehat{b_{\lambda}^{n}}(k)$ obtained in our
paper, we construct strongly annular functions $f$ such that (a)
$\widehat{f}$ belongs to $\ell^{q}\setminus\ell^{p}$ for any given
$2\le p<q$ or (b) $\widehat{f}$ belongs to $\ell_{\varphi}^{2}\setminus\ell^{2}$
where $\ell_{\varphi}^{2}$ is the set of sequences $(a_{n})$ such
that 
\[
\sum_{n\ge0}|a_{n}|^{2}/\varphi(1/|a_{n}|)<\infty,
\]
and $\varphi$ is such that $\lim_{t\rightarrow\infty}\varphi(t)=\infty.$
Furthermore, the functions $f$ we construct are not lacunary in the
sense that if $(s_{k})$ is a sequence of positive integers such that
$s_{k+1}>s_{k}+1$, then $\widehat{f}\cdot\chi_{\mathbb{Z}_{+}\setminus(s_{k})}\not\in\ell^{p}$
and $\widehat{f}\cdot\chi_{\mathbb{Z}_{+}\setminus(s_{k})}\not\in\ell^{2}$,
correspondingly, in the cases (a) and (b).

\subsection{Outline of the paper}

In Section \ref{sec:Asymptotic-formulas-for} below, we state asymptotic
formulas for $\widehat{b_{\lambda}^{n}}(k)$ as $n\rightarrow\infty$.
We distinguish seven regions of $k$ where the asymptotic behavior
of $\widehat{b_{\lambda}^{n}}(k)$ differs. Given $\alpha\in[\epsilon,\alpha_{0})$
we compute an asymptotic formula for $\widehat{b_{\lambda}^{n}}(k)$
when $k\in[0,\alpha n]\cup[n/\alpha,\infty]$ and thereby sharpen
the known fact asserting that $\widehat{b_{\lambda}^{n}}(k)$ decays
exponentially for $k$ in those regions, see Theorem~\ref{Th_Regions_I_VII}
below. Given $\beta\in(\alpha_{0},\alpha_{0}^{-1})$ we find that
for $k\in[\beta n,n/\beta]$ the asymptotic of $\widehat{b_{\lambda}^{n}}(k)$
is oscillatory and witnesses a decay of order $\cO(n^{-1/2})$, see
Theorem \ref{Th:Regions_IV_V_VI}~(2) below. We also compute
an asymptotic formula for $\widehat{b_{\lambda}^{n}}(k)$ as $k$
and $n$ tend simultaneously to $\infty$ and $k$ approaches the
boundaries $\alpha_{0}n,\,\alpha_{0}^{-1}n$. In these regions the
asymptotic behavior of $\widehat{b_{\lambda}^{n}}(k)$ is described
in terms of the Airy function $Ai(x)$, see Theorem \ref{Th_Regions_I_VII}~(3), (4), Theorem \ref{Th:Regions_IV_V_VI} and Proposition
\ref{Prop_Airy_fcts} for more details. We end Section \ref{sec:Asymptotic-formulas-for}
summing up $\widehat{b_{\lambda}^{n}}(k)$'s asymptotics depending
on the region where $k$ belongs, see Figure~\ref{Table} below.
The proofs of Theorem \ref{Th_Regions_I_VII}, Theorem \ref{Th:Regions_IV_V_VI}
and Proposition \ref{Prop_Airy_fcts} are collected in Section \ref{sec:Proofs_asymptotics}.
In Section~\ref{Annular} we give two constructions of strongly annular
functions with small Taylor coefficients in Theorems~\ref{thmlp}
and \ref{thmlphi}. These constructions are based on auxiliary Lemmas~\ref{lem1}
and \ref{lem2} concerning, correspondingly, properties of $b_{1/2}^{n}$
and flat polynomials.

\section{\label{sec:Asymptotic-formulas-for}Asymptotic formulas for $\widehat{b_{\lambda}^{n}}(k)$}

It is known \cite[Proposition 2]{SzZa1}, \cite[Lemma 7]{SzZa3} that
given $\alpha<\alpha_{0}$, $\widehat{b_{\lambda}^{n}}(k)$ decays
exponentially for $k\in[0,\alpha n]\cup[\alpha^{-1}n,+\infty)$ as
$n$ tends to $+\infty$. Theorem \ref{Th_Regions_I_VII} below sharpens
the previous results in \cite[Theorem 2]{BCP}, \cite[Proposition 2]{SzZa1},
\cite[Lemma 7]{SzZa3} by stating asymptotic formulas for $\widehat{b_{\lambda}^{n}}(k)$
as $n$ tends to $+\infty$ when $k$ belongs to those regions: 
\begin{enumerate}
\item If $k$ is fixed (Region I), then the proof of the asymptotic formula
for $\widehat{b_{\lambda}^{n}}(k)$ follows by induction on $k$. 
\item If $k=k(n)\rightarrow\infty$ as $n\rightarrow\infty$ with $k\leq\alpha n$
(Region II) or $k\geq\alpha^{-1}n$ (Region VIII), then the integral
defining $\widehat{b_{\lambda}^{n}}(k)$ is treated by a direct application
of the method of the steepest descent \cite[Chapter 7]{BlHa}, \cite[Chapters 7-8]{Cop},
\cite[Chapter 5]{Bru}, \cite[Chapter 4]{Tem}, which we will use
intensively in our proof. 
\item If $k\in[\alpha n,\alpha_{0}n-n^{1/3})$ and in addition $n^{2/3}(\alpha_{0}-k/n)\rightarrow+\infty$
(Region III) or if $k\in[\alpha_{0}^{-1}n+n^{1/3},\alpha^{-1}n]$
and in addition $n^{2/3}(k/n-\alpha_{0}^{-1})\rightarrow+\infty$
(Region VII), then a uniform version of the steepest descent method
based on \cite{CFU}, see \cite[Section 9.2]{BlHa}, \cite[p. 366--372]{Won},
is required to obtain the asymptotic formula for $\widehat{b_{\lambda}^{n}}(k)$.
More precisely, the proof of our asymptotic formulas for $k$ in Regions
III and VII will follow from an application of Proposition \ref{Prop_Airy_fcts}
(stated below) together with the approximation \eqref{eq:exp_beh_Airy}
of the Airy function for large positive arguments. 
\end{enumerate}
Our asymptotic formulas witnessing exponential decay of $\widehat{b_{\lambda}^{n}}(k)$
for $k$ in Regions I-II-III-VII-VIII are sharp, new and agree on
the intersections of Regions I-II, II-III and VII-VIII: We refer to
the comments below just afer the statement of Theorem \ref{Th_Regions_I_VII}
for a detailed discussion, where we also compare our results to the
previous upper estimates from \cite[Proposition 2]{SzZa1}. We recall
that the value of $\alpha_{0}$ is given by $\alpha_{0}=\frac{1-\lambda}{1+\lambda}$
and that $\Phi$ is defined according to \eqref{Phi} by 
\[
\Phi(z)=\Phi_{k/n}(z)=\log\left(z^{-\frac{k}{n}}b_{\lambda}(z)\right).
\]

\begin{thm}
\label{Th_Regions_I_VII} \label{Exponentiel decay of the Fourier coefficients of the powers of a Blaschke factor}
Let $\alpha\in(0,\alpha_{0})$. Consider a sequence $\omega(n^{1/3})$
such that $\omega(n^{1/3})/n^{1/3}\rightarrow\infty$ as $n\rightarrow\infty$
and assume additionally that $\omega(n^{1/3})=o(n)$ as $n\rightarrow\infty$.
The following asymptotic formulas for the $k^{{\rm th}}-$Fourier
coefficients of $b_{\lambda}^{n}$ hold as $n$ tends to $+\infty$.

{\rm (1)} If $k$ is fixed (Region I), then 
\[
\widehat{b_{\lambda}^{n}}(k)\sim\frac{(-\lambda)^{n-k}\left(n(1-\lambda^{2})\right)^{k}}{k!}.
\]

{\rm (2)} If $k=k(n)\rightarrow\infty$ as $n\rightarrow\infty$ with $k\leq\alpha n$
(Region II) or $k\geq\alpha^{-1}n$ (Region VIII), then 
\[
\widehat{b_{\lambda}^{n}}(k)\sim\frac{1}{\sqrt{2k\pi}}\frac{1}{\left[(\alpha_{0}-k/n)(\alpha_{0}^{-1}-k/n)\right]^{1/4}}\left(\frac{b_{\lambda}(z_{+})}{z_{+}^{k/n}}\right)^{n},
\]
where $z_{+}$ is defined by 
\begin{equation}
z_{+}=z_{+}(k/n)=\frac{\frac{k}{n}(1+\lambda^{2})-(1-\lambda^{2})}{2\lambda\frac{k}{n}}+\sqrt{\left(\frac{\frac{k}{n}(1+\lambda^{2})-(1-\lambda^{2})}{2\lambda\frac{k}{n}}\right)^{2}-1}.\label{eq:z_p}
\end{equation}

{\rm (3)} If $k\in[\alpha n,\alpha_{0}n-\omega(n^{1/3})]$ (Region III),
then 
\[
\widehat{b_{\lambda}^{n}}(k)\sim\frac{(-1)^{n-k}}{\sqrt{2n\pi}}\frac{1}{\sqrt{k/n}\left[(\alpha_{0}^{-1}-k/n)(\alpha_{0}-k/n)\right]^{1/4}}\exp\left(-\frac{2}{3}n\abs{\gamma_{{\alpha_{0}}}}^{3}\right),
\]
where $\gamma_{\alpha_{0}}^{3}$ is given by 
\begin{equation}
\gamma_{\alpha_{0}}^{3}=\frac{3}{2}\left[\Phi(z_{+})-i\pi\left(1-\frac{k}{n}\right)\right],\label{eq:exact_g_3_left_bd}
\end{equation}
and in particular 
\begin{equation}
\gamma_{\alpha_{0}}^{3}\sim-\frac{\left(\alpha_{0}-k/n\right)^{3/2}(1+\lambda)^{3/2}}{(\lambda(1-\lambda))^{1/2}},\qquad k/n\to\alpha_{0},\qquad k/n<\alpha_{0}.\label{eq:g_3_left_approx}
\end{equation}

{\rm (4)} If $k\in[\alpha_{0}^{-1}n+\omega(n^{1/3}),\alpha^{-1}n]$ (Region
VII), then 
\[
\widehat{b_{\lambda}^{n}}(k)\sim\frac{1}{\sqrt{2n\pi}}\frac{1}{\sqrt{k/n}\left[(k/n-\alpha_{0}^{-1})(k/n-\alpha_{0})\right]^{1/4}}\exp\left(-\frac{2}{3}n\abs{\gamma_{{\alpha_{0}}^{-1}}}^{3}\right),
\]
where $\gamma_{\alpha_{0}^{-1}}^{3}$ is given by 
\begin{equation}
\gamma_{\alpha_{0}^{-1}}^{3}=\frac{3}{2}\Phi(z_{+}),\label{eq:exact_g_3_right_bd}
\end{equation}
and in particular 
\begin{equation}
\gamma_{\alpha_{0}^{-1}}^{3}\sim-\frac{(k/n-\alpha_{0}^{-1})^{3/2}(1-\lambda)^{3/2}}{\left(\lambda(1+\lambda)\right)^{1/2}},\qquad k/n\to\alpha_{0}^{-1},\qquad k/n>\alpha_{0}^{-1}.\label{eq:g_3_right_approx}
\end{equation}
\end{thm}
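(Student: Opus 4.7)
The plan is to treat each part by the technique adapted to the location of the saddle points of $\Phi$. A direct computation rewrites
\[
\Phi'(z)=\frac{a\lambda(z-z_+)(z-z_-)}{z(z-\lambda)(1-\lambda z)},\qquad a:=k/n,
\]
so that $z_+z_-=1$, the pair $(z_+,z_-)$ is real (with $z_+\in\mathbb{D}$) when $a\notin[\alpha_0,\alpha_0^{-1}]$ and consists of conjugate points on $\partial\mathbb{D}$ otherwise, coalescing at $-1$ as $a\to\alpha_0$ and at $+1$ as $a\to\alpha_0^{-1}$. Using the saddle point equation one obtains the useful closed forms
\[
\Phi''(z_+)=\frac{a^2\lambda(z_+-z_-)}{z_+^2(1-\lambda^2)},\qquad |z_+-z_-|=\frac{1-\lambda^2}{a\lambda}\sqrt{|(a-\alpha_0)(a-\alpha_0^{-1})|},
\]
and the symmetry $\Phi(1/z)=-\Phi(z)$ yields $\Phi(z_-)=-\Phi(z_+)$.

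For part~(1) no integral asymptotics are needed: the factorisation $b_\lambda(z)^n=(z-\lambda)^n(1-\lambda z)^{-n}$ combined with the ordinary and generalised binomial expansions yields the exact identity
\[
\widehat{b_\lambda^n}(k)=\sum_{j=0}^{k}\binom{n}{j}\binom{n+k-j-1}{k-j}(-\lambda)^{n-j}\lambda^{k-j}.
\]
For $k$ fixed, each binomial is $\sim n^j/j!$ or $n^{k-j}/(k-j)!$, and after factoring out $(-\lambda)^{n-k}n^k$ the sum reduces asymptotically to $(k!)^{-1}\sum_{j=0}^k\binom{k}{j}(-\lambda^2)^{k-j}=(1-\lambda^2)^k/k!$, which proves the announced equivalent.

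For part~(2) we deform $\partial\mathbb{D}$ (still encircling $0$ and avoiding the pole $1/\lambda$) so as to cross the real axis at $z_+$ along the steepest descent direction, and apply the standard saddle point method to~\eqref{eq:integral}. The Gaussian integral produces a factor proportional to $e^{n\Phi(z_+)}/(z_+\sqrt{n\,\Phi''(z_+)})$, which combined with the closed forms for $\Phi''(z_+)$ above and the identity $(b_\lambda(z_+)/z_+^{a})^n=e^{n\Phi(z_+)}$ yields the stated prefactor $(2\pi k)^{-1/2}[(\alpha_0-a)(\alpha_0^{-1}-a)]^{-1/4}$; the remainder of the contour contributes an exponentially smaller error. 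The sign of the saddle-point contribution is fixed by the orientation of the steepest descent direction, itself determined by the sign of $\Phi''(z_+)$ and the chosen branch of $\log$.

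Parts~(3)--(4) are the delicate ones: as $a\to\alpha_0$ (resp.\ $a\to\alpha_0^{-1}$) the two saddles coalesce and the ordinary saddle point method fails uniformly. We invoke the uniform Chester--Friedman--Ursell expansion recorded below as Proposition~\ref{Prop_Airy_fcts}, which expresses $\widehat{b_\lambda^n}(k)$ as a slowly-varying factor times $n^{-1/3}Ai(n^{2/3}\gamma_\star^2)$, with $\gamma_\star^3$ analytic in $a$ and vanishing at the coalescence. Matching the CFU cubic change of variable $\Phi(z)=u^3/3-\gamma_\star^2\,u+\eta$ at the two saddles, and using $\Phi(z_-)=-\Phi(z_+)$, gives the exact identities $\gamma^3_{\alpha_0^{-1}}=\frac{3}{2}\Phi(z_+)$ in Region VII and $\gamma^3_{\alpha_0}=\frac{3}{2}[\Phi(z_+)-i\pi(1-a)]$ in Region III; the subtraction $i\pi(1-a)=\Phi(-1)$ reflects the branch convention $\log(-1)=i\pi$ adopted in Region III and makes $\gamma^3_{\alpha_0}$ real. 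The hypothesis $n^{2/3}(\alpha_0-a)\to\infty$ (resp.\ $n^{2/3}(a-\alpha_0^{-1})\to\infty$) places the Airy argument deep in the large-positive regime where \eqref{eq:exp_beh_Airy} applies, converting the Airy factor into the exponential formula of the theorem. The near-boundary equivalents \eqref{eq:g_3_left_approx}--\eqref{eq:g_3_right_approx} finally come from perturbing the saddle point equation at the coalescence (giving $(z_++1)^2\sim(\alpha_0-a)(1+\lambda)^2/(\alpha_0\lambda)$ near $-1$, and symmetrically near $+1$) and Taylor-expanding $\Phi(z_+)-\Phi(\mp 1)=\int_0^{z_++1}\Phi'(-1+t)\,dt$ (resp.\ the analogous integral near $+1$), which is cubic in that local variable. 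The main obstacle throughout is the branch bookkeeping for $\log$, which determines both the sign of the Gaussian/Airy prefactor and the real form of $\gamma_\star^3$ on either side of $[\alpha_0,\alpha_0^{-1}]$; once this is carefully carried out, the asymptotics follow from routine saddle-point estimates and from Proposition~\ref{Prop_Airy_fcts}.
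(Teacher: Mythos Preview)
Your overall architecture matches the paper: steepest descent through the real saddle $z_+$ for part~(2), and for parts~(3)--(4) the uniform CFU/Airy expansion (Proposition~\ref{Prop_Airy_fcts}) followed by the large-positive-argument asymptotic~\eqref{eq:exp_beh_Airy} of $Ai$. Your derivation of $\gamma^3_\star$ via the symmetry $\Phi(1/z)=-\Phi(z)$ and the identification $\Phi(-1)=i\pi(1-a)$ is exactly the bookkeeping the paper carries out.

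For part~(1) you take a genuinely different and more elementary route than the paper. The paper proves $(b_\lambda^n)^{(k)}(0)\sim(-\lambda)^{n-k}(n(1-\lambda^2))^k$ by induction on $k$, differentiating and applying Leibniz to $(1-\lambda z)^{-2}b_\lambda^{n-1}$. Your argument via the exact binomial identity and termwise asymptotics of $\binom{n}{j}\binom{n+k-j-1}{k-j}$ is cleaner and avoids the induction entirely; the paper's approach, on the other hand, never writes down the closed form for $\widehat{b_\lambda^n}(k)$.

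There is one technical gap in your treatment of part~(2). The statement covers \emph{all} sequences with $k\to\infty$ and $k\le\alpha n$ (respectively $k\ge\alpha^{-1}n$), hence in particular sequences with $a=k/n\to 0$ (respectively $a\to\infty$). In that regime the saddle $z_+(a)\sim -a\lambda/(1-\lambda^2)$ collapses onto the singularity of the integrand at $z=0$ (respectively $z_+\to 1/\lambda$), the radius of convergence of the Taylor expansion of $\Phi$ at $z_+$ is only of order $a$ (respectively $1/a$), and the ``standard'' saddle-point error bound is not uniform. The paper treats this separately (Cases~2 and~4 of its proof of part~(2)): it restricts the Gaussian approximation to a shrinking segment of length $\rho=a\,k^{-2/5}$ on the tangent $T(z_+)$, controls the cubic tail $G(z)$ there by $O(a u_k^3)$ with $u_k=k^{-2/5}$, and then bounds the rest of the contour (taken along the circle $|z|=|z_+|$) using the explicit monotonicity of $|b_\lambda(|z_+|e^{it})|$ in $t$. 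Your sentence ``the remainder of the contour contributes an exponentially smaller error'' is the correct conclusion, but it is precisely the step that requires this additional argument when $a\to 0$ or $a\to\infty$; as written, your proposal only covers the sub-case $a\in[\epsilon,\alpha]\cup[\alpha^{-1},\epsilon^{-1}]$ for some fixed $\epsilon>0$.
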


We proceed with a series of remarks and observations highlighting
the coincidence of our formulas for $k$ on the intersections of Regions
I-II, II-III and VII-VIII, and comparing our results to \cite[Proposition 3~(1)]{SzZa1}
and \cite[Proposition 3(2)]{SzZa1}. 
\begin{enumerate}
\item The asymptotic formula stated for $k$ in Region II agrees with the
one for $k$ in Region I. Indeed, a direct computation shows that if
$k$ is fixed, then 
\[
z_{+}^{-k}\sim(-1)^{k}\frac{\left(n(1-\lambda^{2})\right)^{k}}{k^{k}\lambda^{k}},\qquad n\rightarrow\infty,
\]
and 
\[
(b_{\lambda}(z_{+}))^{n}\sim(-1)^{n}e^{k}\lambda^{n},\qquad n\rightarrow\infty.
\]
The coincidence of the asymptotics follows from an application of
Stirling's formula: if $k=o(n),$ then we have 
$$
\widehat{b_{\lambda}^{n}}(k) \sim\frac{1}{\sqrt{2k\pi}}\frac{b_{\lambda}(z_{+})}{z_{+}^{k}}^{n}\sim\frac{(-\lambda)^{n-k}}{\sqrt{2k\pi}}\frac{\left(n(1-\lambda^{2})\right)^{k}}{k^{k}e^{-k}}
  \sim(-\lambda)^{n-k}\frac{\left(n(1-\lambda^{2})\right)^{k}}{k!}.
$$
\item Theorem \ref{Th_Regions_I_VII}~(1),~(2) sharpens the result from
\cite[Proposition 3~(1)]{SzZa1} for $k\in[0,\alpha n]\cup[\alpha^{-1}n,\infty)$.
The latter asserts that $\widehat{b_{\lambda}^{n}}(k)$ decays exponentially
and uniformly for $k\leq\alpha n$ respectively $k\ge\alpha^{-1}n$.
We observe that since the function $a\mapsto\frac{\abs{b_{\lambda}(z_{+}(a))}}{\abs{z_{+}(a)}^{a}}$
is increasing on the interval $[0,\alpha_{0}]$, we have 
\[
\frac{\abs{b_{\lambda}(z_{+}(k/n))}}{\abs{z_{+}(k/n)}^{k/n}}\leq\frac{\abs{b_{\lambda}(z_{+}(\alpha))}}{\abs{z_{+}(\alpha)}^{\alpha}}<1,
\]
and therefore Theorem \ref{Th_Regions_I_VII} gives that 
\[
\abs{\widehat{b_{\lambda}^{n}(k)}}\leq\frac{C}{\sqrt{k}}\left(\frac{\abs{b_{\lambda}(z_{+}(\alpha))}}{\abs{z_{+}(\alpha)}^{\alpha}}\right)^{n}
\]
uniformly for $k\in[0,\alpha n]$, where $C=C(\lambda,\alpha)>0.$
A similar argument for $k\in[\alpha^{-1}n,\infty)$ (Region VIII)
leads to the same conclusion. 
\item The formulation of Theorem \ref{Th_Regions_I_VII}~(3),~(4) includes the number $\gamma^{3}\in\left\{ \gamma_{\alpha_{0}}^{3},\,\gamma_{\alpha_{0}^{-1}}^{3}\right\} $
whose value is given by \cite[formula (9.2.9),  p. 370]{BlHa}:
\begin{equation}
\gamma^{3}=\frac{3}{4}\left[\Phi(z_{+})-\Phi(z_{-})\right]\label{eq:def_g3_gen}
\end{equation}
where $z_{+}$ is defined by \eqref{eq:z_p} and 
\[
z_{-}=z_{-}(k/n)=\frac{\frac{k}{n}(1+\lambda^{2})-(1-\lambda^{2})}{2\lambda\frac{k}{n}}-\sqrt{\left(\frac{\frac{k}{n}(1+\lambda^{2})-(1-\lambda^{2})}{2\lambda\frac{k}{n}}\right)^{2}-1}.
\]
Formulas \eqref{eq:exact_g_3_left_bd}, \eqref{eq:g_3_left_approx},
\eqref{eq:exact_g_3_right_bd}, and \eqref{eq:g_3_right_approx} all
follow from the above definition \eqref{eq:def_g3_gen} of $\gamma^{3}$. 
\item The formulas stated for $k$ belonging to Regions II--III--VII--VIII coincide.
Assuming $k/n<$$\alpha_{0}$ (Region II) -- respectively $k/n>\alpha_{0}^{-1}$
(Region VI) -- we have $\gamma_{\alpha_{0}}^{3}<0$ -- respectively
$\gamma_{\alpha_{0}^{-1}}^{3}<0$ . Therefore 
\begin{align*}
\exp\left(-\frac{2}{3}n\abs{\gamma_{\alpha_{0}}}^{3}\right) & =\exp\left(n\left(\Phi(z_{+})-i\pi\left(1-\frac{k}{n}\right)\right)\right)\\
 & =(-1)^{k-n}\left(\frac{b_{\lambda}(z_{+})}{z_{+}^{k/n}}\right)^{n}
\end{align*}
and 
\begin{align*}
\exp\left(-\frac{2}{3}n\abs{\gamma_{{\alpha_{0}^{-1}}}}^{3}\right) & =\exp\left(n(\Phi(z_{+}))\right)\\
 & =\left(\frac{b_{\lambda}(z_{+})}{z_{+}^{k/n}}\right)^{n},
\end{align*}
which shows that our asymptotic formulas in these four regions are
actually the same. 
\item The asymptotic formulas stated for $k$ in Regions III
and VII, see Theorem \ref{Th_Regions_I_VII}~(3),~(4), significantly
improve the estimate from \cite[Proposition 2,  (2)]{SzZa1} where
the decay of $\widehat{b_{\lambda}^{n}}(k)$ is only shown to be 
\[
\cO\left(\max\left\{ \frac{1}{\abs{\alpha_{0}n-k}},\frac{1}{\abs{\alpha_{0}^{-1}n-k}}\right\} \right).
\]
\end{enumerate}
The following result, Theorem \ref{Th:Regions_IV_V_VI} below, establishes
asymptotic formulas for $\widehat{b_{\lambda}^{n}}(k)$ as $n$ tends
to $+\infty$ when $k$ belongs to the remaining regions where it
turns out that the decay of $\widehat{b_{\lambda}^{n}}(k)$ is no
longer exponential but either of order $\cO(n^{-1/3})$ or oscillatory
and of order $\cO(n^{-1/2})$. More precisely: 
\begin{enumerate}
\item Airy-type behaviour for $\widehat{b_{\lambda}^{n}}(k)$ near the
$k$-transition points $n\alpha_{0}$ and $n\alpha_{0}^{-1}$ is established, 
which extends the formula from \cite[Proposition 6]{SzZa1}
to the case $k>\alpha_{0}^{-1}n$ and generalizes it to the left boundary
$\alpha_{0}n$ (for $k$ both from the left and from the right of
$\alpha_{0}n$). Our asymptotic formulas are respectively given below
for $k$ near $n\alpha_{0}$ (Region IV) see Theorem \ref{Th:Regions_IV_V_VI}~(1), and for $k$ near $n\alpha_{0}^{-1}$ (Region VI), see Theorem
\ref{Th:Regions_IV_V_VI}(3), asserting that the decay of $\widehat{b_{\lambda}^{n}}(k)$
for $k$ in these regions is of order $\cO(n^{-1/3})$ at least when
$k$ lies in neighbourhoods of the boundaries $\alpha_{0}n,\,\alpha_{0}^{-1}n$
of length proportional to $n^{1/3}$. For $k$ in those neighborhoods,
the quantity $n^{2/3}\gamma^{2}$ is always bounded in $n.$ The main
tool to prove Theorem \ref{Th:Regions_IV_V_VI} (1), (3) is the uniform
version of the steepest descent method based on \cite{CFU} already
mentioned above, which we apply following \cite[Section 9.2]{BlHa}
to prove Proposition \ref{Prop_Airy_fcts} as an intermediate step. 
\item If $k$ lies in the remaining central region, $(\alpha_{0}n+n^{1/3},\alpha_{0}^{-1}n-n^{1/3})$,
and if in addition $n^{2/3}(k/n-\alpha_{0})\rightarrow+\infty$ or
$n^{2/3}(\alpha_{0}^{-1}-k/n)\rightarrow+\infty$, we find that the
decay of $\widehat{b_{\lambda}^{n}}(k)$ is oscillatory and of order
$\cO(n^{-1/2})$. The corresponding asymptotic formula is stated below,
see Theorem \ref{Th:Regions_IV_V_VI} (2). To prove the latter for
$k/n\rightarrow\alpha_{0}$ or $k/n\rightarrow\alpha_{0}^{-1}$ we
choose $\beta\in(\alpha_{0},1)$ close enough to $\alpha_{0}$, and
combine a uniform version of the method of stationary phase \cite[Section 2.3]{Bor}
(again based on \cite{CFU}, see the proof of Proposition \ref{Prop_Airy_fcts}
below) with the approximation \eqref{eq:osc_beh_Airy} of the Airy
function for large negative arguments. The proof of Theorem \ref{Th:Regions_IV_V_VI}
(2) for $k$ in the remaining interval $[\beta n,\beta^{-1}n]$ follows
from an application of the standard version of the stationary phase
method \cite[Theorem 4]{Erd}. The proof is however rather long and
technical, and we will use a more elaborate version of this method
due to M.V.~Fedoryuk \cite[Theorem 2.4 p.~80]{Fed2}, \cite[Theorem 1.6 p.107]{Fed1},
which will make the argument much shorter, see Section \ref{subsec:Fedoryuk}
for more details. 
\end{enumerate}
The asymptotic approximations \eqref{eq:exp_beh_Airy} -- respectively
\eqref{eq:osc_beh_Airy} -- for large positive -- respectively negative
-- arguments of the Airy function, show that our asymptotic formulas
coincide for $k$ at the intersection of Regions III-IV, IV-V, V-VI
and VI-VII. 
\begin{thm}
\label{Th:Regions_IV_V_VI} Let $\omega(n^{1/3})$ be a sequence such
that $\omega(n^{1/3})/n^{1/3}\rightarrow\infty$ as $n\rightarrow\infty$.
We assume in addition that $\omega(n^{1/3})=o(n)$ as $n\rightarrow\infty$.
The following asymptotic formulas for the $k^{{\rm th}}$ Fourier
coefficients of $b_{\lambda}^{n}$ hold as $n$ tends to $+\infty$.

{\rm (1)} If $k\in[\alpha_{0}n-\omega(n^{1/3}),\alpha_{0}n+\omega(n^{1/3})]$
(Region IV), then 
\[
\widehat{b_{\lambda}^{n}}(k)\sim\frac{(-1)^{n-k}\sqrt{2}}{n^{1/3}}\frac{(1+\lambda)^{1/4}}{(\lambda(1-\lambda))^{1/12}}\frac{1}{\sqrt{k/n}(\alpha_{0}^{-1}-k/n)^{1/4}}Ai\left(n^{2/3}\gamma_{{\alpha_{0}}}^{2}\right),
\]
where $\gamma_{\alpha_{0}}^{2}$ is asymptotically given by 
\begin{equation}
\gamma_{\alpha_{0}}^{2}\sim\frac{\left(\alpha_{0}-k/n\right)(1+\lambda)}{(\lambda(1-\lambda))^{1/3}},\qquad k/n\to\alpha_{0}.\label{eq:g_2_left}
\end{equation}

{\rm (2)} If $k\in[\alpha_{0}n+\omega(n^{1/3}),\alpha_{0}^{-1}n-\omega(n^{1/3})]$
(Region V), then 
\[
\widehat{b_{\lambda}^{n}}(k)\sim\sqrt{\frac{2}{n\pi}}\frac{\cos\left(nh(\varphi_{+})-\pi/4\right)}{\sqrt{k/n}\left[(\alpha_{0}^{-1}-k/n)(k/n-\alpha_{0})\right]^{1/4}},
\]
where $h=h_{k/n}$ is defined in \eqref{eq:h} and the parameter $\varphi_{+}\in[0,\pi]$
is defined by 
\[
e^{i\varphi_{+}}=z_{+}=\frac{\frac{k}{n}(1+\lambda^{2})-(1-\lambda^{2})}{2\lambda\frac{k}{n}}+i\sqrt{1-\left(\frac{\frac{k}{n}(1+\lambda^{2})-(1-\lambda^{2})}{2\lambda\frac{k}{n}}\right)^{2}}.
\]

{\rm (3)} If $k\in[\alpha_{0}^{-1}n-\omega(n^{1/3}),\alpha_{0}^{-1}+\omega(n^{1/3})]$
(Region VI), then 
\[
\widehat{b_{\lambda}^{n}}(k)\sim\frac{\sqrt{2}}{n^{1/3}}\frac{(1-\lambda)^{1/4}}{(\lambda(1+\lambda))^{1/12}}\frac{1}{\sqrt{k/n}(k/n-\alpha_{0})^{1/4}}Ai\left(n^{2/3}\gamma_{{\alpha_{0}}^{-1}}^{2}\right),
\]
where 
\begin{equation}
\gamma_{\alpha_{0}^{-1}}^{2}\sim\frac{(k/n-\alpha_{0}^{-1})(1-\lambda)}{\left(\lambda(1+\lambda)\right)^{1/3}},\qquad k/n\to\alpha_{0}^{-1}.\label{eq:g_2_right}
\end{equation}
\end{thm}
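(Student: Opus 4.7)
The plan is to analyze the integral
\[
\widehat{b_{\lambda}^{n}}(k)=\frac{1}{2i\pi}\int_{\partial\mathbb{D}}e^{n\Phi(z)}\frac{\d z}{z}=\frac{1}{\pi}\Re\!\left\{\int_{0}^{\pi}e^{inh_{k/n}(\varphi)}\d\varphi\right\}
\]
separately in the three sub-regions, exploiting the fact that the critical points $z_{\pm}$ of $\Phi$ (identified via Lemma~\ref{lem:critical_pts_f}) sit on $\partial\mathbb{D}$ when $k/n\in[\alpha_{0},\alpha_{0}^{-1}]$, coalesce precisely at the transition values $\alpha_{0}$ and $\alpha_{0}^{-1}$, and move off the unit circle otherwise. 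The whole theorem rests on a single uniform asymptotic (Proposition~\ref{Prop_Airy_fcts}) in which the phase $\Phi$ is replaced, via a Chester--Friedman--Ursell change of variable, by the canonical cubic $-\frac{1}{3}t^{3}+\gamma^{2}t$, yielding Airy-function asymptotics that remain valid uniformly as the two critical points merge.

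For parts~(1) and~(3) (Regions IV and VI), I would simply plug the ranges $k/n=\alpha_{0}+O(\omega(n^{1/3})/n)$ and $k/n=\alpha_{0}^{-1}+O(\omega(n^{1/3})/n)$ into Proposition~\ref{Prop_Airy_fcts}. The approximations~\eqref{eq:g_2_left} and~\eqref{eq:g_2_right} for $\gamma^{2}$ then follow from Taylor-expanding the defining relation $\gamma^{3}=\frac{3}{4}[\Phi(z_{+})-\Phi(z_{-})]$ (with $z_{\pm}$ given by~\eqref{eq:z_p}) about the double critical value $a=\alpha_{0}$ or $a=\alpha_{0}^{-1}$; this is a direct computation using $z_{\pm}=1+O(\sqrt{\alpha_{0}-a})$ near the left boundary and $z_{\pm}=-1+O(\sqrt{a-\alpha_{0}^{-1}})$ near the right one. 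The amplitude prefactors are extracted from the first nonzero Taylor coefficient of $\Phi$ at the coalescence point.

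For part~(2) I would split Region V into two overlapping pieces. On the inner sub-region $[\beta n,\beta^{-1}n]$ with $\alpha_{0}<\beta<1$ fixed, the two critical points $\varphi_{\pm}=\pm\arccos\!\bigl((k/n)(1+\lambda^{2})-(1-\lambda^{2})\bigr)/(2\lambda k/n)$ of $h_{k/n}$ on $[-\pi,\pi]$ are bounded away from each other and from the endpoints, so the Fedoryuk version of the stationary phase method~\cite[Theorem 2.4]{Fed2} applies with full uniformity in $k/n\in[\beta,\beta^{-1}]$. Each critical point contributes $\sqrt{2\pi/(n|h''(\varphi_{\pm})|)}\,\exp(inh(\varphi_{\pm})\pm i\pi/4)$, and because $h(-\varphi)=-h(\varphi)$ and $|h''(\varphi_{+})|=|h''(\varphi_{-})|$, taking the real part collapses the two contributions into the single cosine written in the statement; a computation of $h''(\varphi_{+})$ then identifies the prefactor $\sqrt{k/n}\,[(\alpha_{0}^{-1}-k/n)(k/n-\alpha_{0})]^{1/4}$. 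On the complementary strips $k/n\in(\alpha_{0},\beta)$ and $k/n\in(\beta^{-1},\alpha_{0}^{-1})$ (with $n^{2/3}(k/n-\alpha_{0})\to\infty$ or $n^{2/3}(\alpha_{0}^{-1}-k/n)\to\infty$), I would instead use the uniform stationary-phase version of Proposition~\ref{Prop_Airy_fcts} and substitute the oscillatory large-negative-argument expansion~\eqref{eq:osc_beh_Airy} of $Ai$; the resulting $\cos(\frac{2}{3}n|\gamma|^{3}-\pi/4)$ matches $\cos(nh(\varphi_{+})-\pi/4)$ upon observing that $\frac{2}{3}\gamma^{3}=h(\varphi_{+})$ on that range (by the definition~\eqref{eq:def_g3_gen} combined with $\Phi(z_{+})=ih(\varphi_{+})$ and $\Phi(z_{-})=-ih(\varphi_{+})$).

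The main technical obstacle will be the uniformity as $k/n$ crosses the threshold $\beta$: I need the Fedoryuk asymptotic and the CFU-based uniform asymptotic to agree on the overlap, which is exactly what~\eqref{eq:osc_beh_Airy} provides, but the bookkeeping of branches of $\log$ in the definition~\eqref{Phi} of $\Phi$ (and of the square-root branches in $z_{\pm}$) must be done consistently on either side of $\alpha_{0}$ and $\alpha_{0}^{-1}$ to ensure that the phase $nh(\varphi_{+})$ emerging from the stationary-phase argument is literally the same function appearing in the $\gamma^{3}$ formulas of Theorem~\ref{Th_Regions_I_VII}. Once this matching is verified, parts~(1)--(3) follow from Proposition~\ref{Prop_Airy_fcts} and Fedoryuk's theorem with no further surprises.
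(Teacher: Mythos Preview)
Your proposal is correct and follows essentially the same route as the paper: Proposition~\ref{Prop_Airy_fcts} (the CFU cubic change of variable) handles Regions IV and VI directly and the boundary strips of Region V via the large-negative-argument Airy asymptotic~\eqref{eq:osc_beh_Airy}, while Fedoryuk's uniform stationary phase covers the inner part $[\beta n,\beta^{-1}n]$ of Region V. One small slip to fix when you write it up: the coalescence points are swapped in your sketch---by Lemma~\ref{lem:critical_pts_f}, $z_{\pm}\to-1$ as $a\to\alpha_{0}$ and $z_{\pm}\to 1$ as $a\to\alpha_{0}^{-1}$, not the other way round; and the relation you quote should read $\frac{2}{3}\gamma^{3}=ih(\varphi_{+})$ (purely imaginary) rather than $h(\varphi_{+})$, which is exactly what makes the $(-1)^{n-k}$ factor disappear on the $\alpha_{0}$ side after simplifying the cosine.
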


The formulas given in Theorem \ref{Th:Regions_IV_V_VI} (1) respectively
(3) are actually valid for $k/n$ in a \textit{fixed} neighbourhood
of $\alpha_{0}$ respectively $\alpha_{0}^{-1}$. In fact, they hold
more generally if $k\in[\alpha n,\beta n]$ respectively $k\in[\beta^{-1}n,\alpha^{-1}n]$
as long as $\alpha\in(0,\alpha_{0})$ and $\beta\in(\alpha_{0},1)$
are chosen close enough to $\alpha_{0}$. This is the content of Proposition
\ref{Prop_Airy_fcts} below, which entirely describes the Airy-type
behaviour of $\widehat{b_{\lambda}^{n}}(k)$ near the $k$-transition
points $n\alpha_{0}$ and $n\alpha_{0}^{-1}$. It is a modified version
of \cite[Proposition 17]{SzZa4} where only upper bounds were stated
and where the factor $(1-z^{2})$ has been replaced by 1. The main
tool to prove Proposition \ref{Prop_Airy_fcts} is a result from \cite{CFU},
which we apply following \cite[Section 9.2]{BlHa}. 
\begin{prop}
\label{Prop_Airy_fcts} Fix $\alpha\in(0,\alpha_{0})$ and $\beta\in(\alpha_{0},1)$.
Suppose that $\alpha$ and $\beta$ are close enough to $\alpha_{0}$.
If $k/n\in[\alpha,\beta]$, then 
\[
\widehat{b_{\lambda}^{n}}(k)\sim_{n\rightarrow\infty}(-1)^{n-k}\sqrt{\frac{2\abs{\gamma}}{k/n}}\frac{1}{\abs{\Delta}^{1/4}}\frac{Ai(n^{2/3}\gamma^{2})}{n^{1/3}},
\]
where $\Delta=(k/n-\alpha_{0})(\alpha_{0}^{-1}-k/n)$ and $\gamma^{2}=\gamma_{\alpha_{0}}^{2}$
is asymptotically given by \eqref{eq:g_2_left} as $k/n\rightarrow\alpha_{0}.$
\textup{If $k/n\in[\beta^{-1},\alpha^{-1}]$, then } 
\[
\widehat{b_{\lambda}^{n}}(k)\sim_{n\rightarrow\infty}\sqrt{\frac{2\abs{\gamma}}{k/n}}\frac{1}{\abs{\Delta}^{1/4}}\frac{Ai(n^{2/3}\gamma^{2})}{n^{1/3}},
\]
where $\gamma^{2}=\gamma_{\alpha_{0}^{-1}}^{2}$ is asymptotically
given by \eqref{eq:g_2_right} as $k/n\rightarrow\alpha_{0}^{-1}.$ 
\end{prop}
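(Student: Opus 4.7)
My plan is to adapt the Chester--Friedman--Ursell uniform steepest descent scheme \cite{CFU} to the integral \eqref{eq:integral}, following \cite[Section 9.2]{BlHa} and the closely related \cite[Proposition 17]{SzZa4} (which treats the same integral with the test function $(1-z^{2})/z$ instead of $1/z$, and only produces upper bounds). Setting $a=k/n$, the equation $\Phi'_{a}(z)=0$ reduces to a quadratic in $z$ whose product of roots equals $1$, so the two critical points $z_{\pm}=z_{\pm}(a)$ of \eqref{eq:z_p} satisfy $z_{+}z_{-}=1$, coincide at $z=-1$ when $a=\alpha_{0}$ and at $z=+1$ when $a=\alpha_{0}^{-1}$, and stay uniformly close to $-1$ (resp.\ $+1$) for $a\in[\alpha,\beta]$ (resp.\ $[\beta^{-1},\alpha^{-1}]$) provided $\alpha,\beta$ are taken close enough to $\alpha_{0}$. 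In these ranges the two saddles coalesce on $\partial\mathbb{D}$, so standard stationary phase and steepest descent both collapse, and CFU is the natural tool.

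Following CFU, I introduce the cubic substitution $w\mapsto z(w)$ defined by
\[
\Phi_{a}(z)\;=\;\tfrac{1}{3}w^{3}-\gamma^{2}w+A,\qquad A=\tfrac{1}{2}\bigl(\Phi_{a}(z_{+})+\Phi_{a}(z_{-})\bigr),\qquad \gamma^{3}=\tfrac{3}{4}\bigl(\Phi_{a}(z_{+})-\Phi_{a}(z_{-})\bigr),
\]
so that the two critical points $w=\pm\gamma$ of the right-hand side are sent to $z=z_{\mp}$. This $\gamma$ coincides with $\gamma_{\alpha_{0}}$ or $\gamma_{\alpha_{0}^{-1}}$ of \eqref{eq:def_g3_gen}, and a Taylor expansion of $\Phi_{a}$ at $z=-1$ (resp.\ $z=+1$) recovers the asymptotics \eqref{eq:g_2_left}--\eqref{eq:g_2_right}. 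By the main theorem of \cite{CFU}, $w\mapsto z(w)$ is analytic and univalent in a fixed disk about the origin, with analytic dependence on $a$, provided $a$ stays in a sufficiently small neighborhood of $\alpha_{0}$ or $\alpha_{0}^{-1}$; this is what forces the hypothesis that $\alpha,\beta$ be close to $\alpha_{0}$. Pulling $\partial\mathbb{D}$ through the substitution and deforming its image to the canonical Airy contour $\mathcal{C}$ then produces
\[
\widehat{b_{\lambda}^{n}}(k)\;=\;\frac{e^{nA}}{2\pi i}\int_{\mathcal{C}}e^{n(w^{3}/3-\gamma^{2}w)}\,G(w)\,dw\;+\;O(e^{-cn}),\qquad G(w)=\frac{1}{z(w)}\frac{dz}{dw},
\]
the exponentially small error coming from the part of $\partial\mathbb{D}$ outside the validity neighborhood of the substitution, via the standard bound $\Re\Phi_{a}<\Re\Phi_{a}(z_{\pm})-c$ off a fixed neighborhood of the saddles.

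The leading asymptotic now follows by writing $G(w)=P_{0}+Q_{0}(w^{2}-\gamma^{2})+O((w^{2}-\gamma^{2})^{2})$ with $P_{0}=\tfrac12(G(\gamma)+G(-\gamma))$, and identifying the $P_{0}$-integral with $P_{0}\,Ai(n^{2/3}\gamma^{2})/n^{1/3}$ via the standard Airy integral representation (the $Q_{0}$-term produces a subdominant $Ai'$ contribution absorbed in $\sim$). Differentiating the defining identity $\Phi'_{a}(z)(dz/dw)=w^{2}-\gamma^{2}$ once more at $w=\pm\gamma$ gives $(dz/dw)^{2}|_{w=\pm\gamma}=\pm 2\gamma/\Phi''_{a}(z_{\mp})$; a direct evaluation of $\Phi''_{a}$ at the saddles together with $z_{+}z_{-}=1$ yields $|\Phi''_{a}(z_{\pm})|\asymp (k/n)\sqrt{|\Delta|}$, and averaging over $\pm\gamma$ produces the claimed prefactor $\sqrt{2|\gamma|/(k/n)}\,|\Delta|^{-1/4}$. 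The overall sign is fixed by evaluating $e^{nA}$ at the coalescence: $\Phi_{a}(-1)=i\pi(1-a)$ gives $e^{nA}\to(-1)^{n-k}$ near $\alpha_{0}$, and $\Phi_{a}(+1)=0$ gives $e^{nA}\to 1$ near $\alpha_{0}^{-1}$, which is exactly the sign discrepancy between the two cases of the statement.

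The main obstacle is uniformity in $a$ over the full interval $[\alpha,\beta]$ (or $[\beta^{-1},\alpha^{-1}]$). Non-uniform saddle methods blow up as $z_{+}$ and $z_{-}$ coalesce, and the delicate point is to ensure that the CFU substitution's domain of univalence does not shrink as $a$ sweeps through the transition. This is precisely what forces $\alpha,\beta$ to be close to $\alpha_{0}$: it keeps $z_{\pm}$ inside a fixed neighborhood of the coalescence point, so that the CFU implicit function theorem applies with bounds uniform in the parameter $a$. Once this uniform substitution is established, the Airy asymptotic stated in the proposition is a direct consequence of the Airy integral representation.
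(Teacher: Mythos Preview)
Your approach is essentially the paper's: both apply the Chester--Friedman--Ursell cubic transformation following \cite[Section~9.2]{BlHa} and \cite[Proposition~17]{SzZa4}, both compute the prefactor from $(dz/dw)^2|_{\pm\gamma}=\pm 2\gamma/\Phi''_a(z_{\mp})$, and both read off the sign $(-1)^{n-k}$ from $e^{nA}$ with $A=\eta=i\pi(1-k/n)$ near $\alpha_0$ versus $A=0$ near $\alpha_0^{-1}$.

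Two points need correction, however. First, your expansion $G(w)=P_0+Q_0(w^2-\gamma^2)+\dots$ drops the odd part: the CFU decomposition is $G=A_0+A_1 w+(w^2-\gamma^2)H_0$, and it is the linear term $A_1 w$ that produces the $Ai'$ contribution, not $Q_0(w^2-\gamma^2)$ (the latter integrates by parts to an $O(1/n)$ correction). The paper checks explicitly that $G_0(\gamma)=G_0(-\gamma)$, whence $A_1=0$; you use this implicitly when you ``average over $\pm\gamma$'' but never verify it. Second, your tail bound $O(e^{-cn})$ via ``$\Re\Phi_a<\Re\Phi_a(z_\pm)-c$ off a fixed neighborhood of the saddles'' is false for $a\in(\alpha_0,\beta]$: there $z_\pm\in\partial\mathbb{D}$ and $\Re\Phi_a\equiv 0$ on the whole circle, so no exponential decay is available. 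The remainder is still $o(n^{-1/3})$ --- indeed $O(n^{-M})$ for every $M$ --- but by repeated integration by parts (non-stationary phase), not steepest descent. The paper is careful to say only ``asymptotically smaller than the integral itself'' here and to distinguish the two subcases $a\lessgtr\alpha_0$ (with and without contour deformation).
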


\begin{rem*}
The factor $(-1)^{n-k}$ in the first formula of Proposition \ref{Prop_Airy_fcts}
corresponds to that in Theorem \ref{Th_Regions_I_VII} for $k$ in
Regions I and II. Indeed, the Airy function is positive in a neighborhood
of 0, and for $k\in[0,\alpha n]$ (Region I) the sign of the factor
$\left(\frac{b_{\lambda}(z_{+})}{z_{+}^{k/n}}\right)^{n}$ is $(-1)^{n-k}$
because $z_{+}=z_{+}(k/n)$ is negative. 
\end{rem*}
Proposition \ref{Prop_Airy_fcts} shows in particular that: 
\begin{enumerate}
\item For $k\in[\alpha n,\alpha_{0}n-n^{1/3})$ respectively $k\in[\alpha_{0}^{-1}n+n^{1/3},\alpha^{-1}n]$
such that $n^{2/3}(\alpha_{0}-k/n)\rightarrow+\infty$ respectively
$n^{2/3}(k/n-\alpha_{0}^{-1})\rightarrow+\infty$, we use \eqref{eq:g_2_left}
respectively \eqref{eq:g_2_right} to observe that $n^{2/3}\gamma^{2}\rightarrow+\infty$
and then use the asymptotic approximation \eqref{eq:exp_beh_Airy}
for large positive arguments of the Airy function, to deduce the precise
nature of $\widehat{b_{\lambda}^{n}}(k)$'s exponential decay in these
regions, see Theorem \ref{Th_Regions_I_VII}~(3),~(4) above. 
\item For $k\in(\alpha_{0}n+n^{1/3},\beta n]\cup[\beta^{-1}n,\alpha_{0}^{-1}n-n^{1/3})$
such that either $n^{2/3}(k/n-\alpha_{0})\rightarrow+\infty$ or $n^{2/3}(\alpha_{0}^{-1}-k/n)\rightarrow+\infty$,
we use \eqref{eq:g_2_left} and \eqref{eq:g_2_right} to observe that
$n^{2/3}\gamma^{2}\rightarrow-\infty$ and then apply the asymptotic
approximation \eqref{eq:osc_beh_Airy} for large negative arguments
of the Airy function, which shows that the decay of $\widehat{b_{\lambda}^{n}}(k)$
is oscillatory in these regions, see Theorem \ref{Th:Regions_IV_V_VI}
(2) above. 
\end{enumerate}

\subsection{Summary of $\widehat{b_{\lambda}^{n}}(k)$'s asymptotics}

The table below, see Figure \ref{Table}, shows values of $A(n,k)$
such that 
\[
\widehat{b_{\lambda}^{n}}(k)\asymp A(n,k)
\]
depending on the region to which $k$ belongs. Again we use Landau
standard notation and denote by $\omega(n^{1/3})$ a sequence such
that $\omega(n^{1/3})/n^{1/3}\rightarrow\infty$ as $n\rightarrow\infty$.
We assume in addition that $\omega(n^{1/3})=o(n)$ as $n\rightarrow\infty$.
The numbers $\gamma_{\alpha_{0}}$ and $\gamma_{{\alpha_{0}}^{-1}}$
are asymptotically given by 
\[
\gamma_{\alpha_{0}}^{2}\asymp\alpha_{0}-k/n\qquad{\rm and}\qquad\gamma_{1/\alpha_{0}}^{2}\asymp k/n-1/\alpha_{0}
\]
respectively as $k/n\to\alpha_{0}$ and $\alpha_{0}^{-1}$. The table
shows that the asymptotic behavior of $\widehat{b_{\lambda}^{n}}(k)$
is symmetric with respect to Region V. A possible explanation for
that symmetry is due to the following observation, which is a consequence
of a simple change of variable. 
\begin{prop}
Given $\lambda\in(0,1),$ $k\geq1$, and $n\geq1$, the following
identity holds 
\[
\widehat{b_{\lambda}^{n}}(k)=\frac{(-1)^{n-k}}{2i\pi}\int_{\partial\mathbb{D}}\tilde{\varphi}(z)\exp\left(k\tilde{\Phi}(z)\right){\rm d}z
\]
where 
\[
\tilde{\varphi}(z)=\frac{1}{z}\frac{1-\lambda^{2}}{\abs{1-\lambda z}^{2}}\qquad{\rm and}\qquad\tilde{\Phi}(z)=\log\left(\frac{b_{\lambda}(z)}{z^{\frac{n}{k}}}\right)=\Phi_{n/k}(z),
\]
$\Phi_{n/k}$ being defined by \eqref{Phi}. 
\end{prop}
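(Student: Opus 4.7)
The plan is to prove the identity by a single change of variable on the unit circle, namely the involution $u=-1/b_{\lambda}(z)$, which maps $\partial\mathbb{D}$ onto itself and exchanges the roles of $n$ and $k$ in the integrand while producing the prefactor $(-1)^{n-k}$ and the weight $\tilde{\varphi}$.

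Starting from the definition
\[
\widehat{b_{\lambda}^{n}}(k)=\frac{1}{2i\pi}\int_{\partial\mathbb{D}}b_{\lambda}^{n}(z)\,z^{-k}\,\frac{\d z}{z},
\]
I would first verify that $u=-1/b_{\lambda}(z)$ is an involution, i.e.\ that the equation $u=-1/b_{\lambda}(z)$ solves to $z=-1/b_{\lambda}(u)=-(1-\lambda u)/(u-\lambda)$. Since $|b_{\lambda}|=1$ on $\partial\mathbb{D}$, this map sends the unit circle to itself; because $b_{\lambda}$ is orientation-preserving on $\partial\mathbb{D}$ and $w\mapsto1/w$ reverses orientation (while $w\mapsto-w$ is a rotation), the change of variable $u\mapsto z$ reverses orientation on $\partial\mathbb{D}$.

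Next I would compute the Jacobian. A direct differentiation of $z=-(1-\lambda u)/(u-\lambda)$ yields $\d z=(1-\lambda^{2})(u-\lambda)^{-2}\,\d u$, whence
\[
\frac{\d z}{z}=-\frac{1-\lambda^{2}}{(1-\lambda u)(u-\lambda)}\,\d u.
\]
For $u\in\partial\mathbb{D}$ one has $|1-\lambda u|^{2}=(1-\lambda u)(1-\lambda/u)=(1-\lambda u)(u-\lambda)/u$, so $(1-\lambda u)(u-\lambda)=u|1-\lambda u|^{2}$ and therefore $\d z/z=-\tilde{\varphi}(u)\,\d u$. Combining this with the orientation reversal contributes a second minus sign, giving the net identity $\int_{\partial\mathbb{D}}f(z)\,\d z/z=\int_{\partial\mathbb{D}}f(-1/b_{\lambda}(u))\,\tilde{\varphi}(u)\,\d u$ with both contours oriented counterclockwise.

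Finally, I would evaluate the integrand under the substitution: from $b_{\lambda}(z)=-1/u$ one gets $b_{\lambda}^{n}(z)=(-1)^{n}u^{-n}$, and from $z=-1/b_{\lambda}(u)$ one gets $z^{-k}=(-1)^{k}b_{\lambda}^{k}(u)$, so the product picks up exactly $(-1)^{n+k}=(-1)^{n-k}$. Recognizing $b_{\lambda}^{k}(u)\,u^{-n}=\exp(k\tilde{\Phi}(u))$ with $\tilde{\Phi}=\Phi_{n/k}$ completes the proof. The main (and only) delicate point is the bookkeeping of the two sign contributions—one from the Jacobian computation and one from the orientation reversal—which cleanly combine with the signs coming from $b_{\lambda}^{n}(z)$ and $z^{-k}$ to produce the single factor $(-1)^{n-k}$ appearing in the statement.
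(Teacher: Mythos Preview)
Your proof is correct and arguably cleaner than the paper's, but it uses a genuinely different substitution. The paper changes variables via the disk involution $u=\widetilde{b_{\lambda}}(z):=-b_{\lambda}(z)$, which preserves orientation on $\partial\mathbb{D}$; this produces an integral of the form $\int u^{n}(\widetilde{b_{\lambda}}(u))^{-k-1}\cdots\,du$, and the authors then need an extra step---taking the complex conjugate and using that $\widehat{b_{\lambda}^{n}}(k)$ is real---to bring the exponents back to the desired form $b_{\lambda}(u)^{k}u^{-n}$. Your substitution $u=-1/b_{\lambda}(z)$ is also an involutive M\"obius map on $\partial\mathbb{D}$, but it reverses orientation; the extra sign from the orientation reversal cancels with the sign from the Jacobian exactly as you say, and the integrand lands directly in the form $(-1)^{n-k}b_{\lambda}(u)^{k}u^{-n}\tilde\varphi(u)$ without any conjugation step. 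Since on $\partial\mathbb{D}$ one has $-1/b_{\lambda}(z)=\overline{-b_{\lambda}(z)}$, your map is precisely the paper's map composed with complex conjugation, so your single substitution absorbs the paper's two steps into one. The only point worth flagging is that your map does not preserve the unit disk (it swaps interior and exterior), but this is irrelevant since the change of variable is purely a reparametrization of the contour $\partial\mathbb{D}$.
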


\begin{proof}
We first write

\begin{align*}
\widehat{b_{\lambda}^{n}}(k) & =\frac{1}{2i\pi}\int_{\partial\mathbb{D}}\left(b_{\lambda}(z)\right)^{n}z^{-k-1}{\rm d}z\\
 & =\frac{(-1)^{n}}{2i\pi}\int_{\partial\mathbb{D}}\left(\widetilde{b_{\lambda}}(z)\right)^{n}z^{-k-1}{\rm d}z
\end{align*}
where $\widetilde{b_{\lambda}}(z)=-b_{\lambda}(z)=\frac{\lambda-z}{1-\lambda z}$
satisfies $\widetilde{b_{\lambda}}\circ\widetilde{b_{\lambda}}={\rm id}$.
Changing the variable $z$ by $u=\widetilde{b_{\lambda}}(z)$ we get
$z=\widetilde{b_{\lambda}}(u)$, ${\rm d}z=-\frac{1-\lambda^{2}}{(1-\lambda u)^{2}}{\rm d}u$
and therefore 
\begin{align*}
\widehat{b_{\lambda}^{n}}(k) & =-\frac{(-1)^{n}}{2i\pi}\int_{\partial\mathbb{D}}u^{n}\left(\widetilde{b_{\lambda}}(u)\right)^{-k-1}\frac{1-\lambda^{2}}{(1-\lambda u)^{2}}{\rm d}u\\
 & =\frac{(-1)^{n-k}}{2i\pi}\int_{\partial\mathbb{D}}u^{n}\left(b_{\lambda}(u)\right)^{-k-1}\frac{1-\lambda^{2}}{(1-\lambda u)^{2}}{\rm d}u.\\
 & =\frac{(-1)^{n-k}}{2\pi}\int_{\partial\mathbb{D}}u^{n+1}\left(b_{\lambda}(u)\right)^{-k-1}\frac{1-\lambda^{2}}{(1-\lambda u)^{2}}\vert_{u=e^{it}}{\rm d}t
\end{align*}
Taking into account the fact that $\widehat{b_{\lambda}^{n}}(k)$
is real and using complex conjugation we find 
\begin{align*}
\widehat{b_{\lambda}^{n}}(k) & =\frac{(-1)^{n-k}}{2\pi}\int_{\partial\mathbb{D}}\left(b_{\lambda}(u)\right)^{k+1}u^{-n-1}u^{2}\frac{1-\lambda^{2}}{(u-\lambda)^{2}}\vert_{u=e^{it}}{\rm d}t\\
 & =\frac{(-1)^{n-k}}{2\pi}\int_{\partial\mathbb{D}}\left(b_{\lambda}(u)\right)^{k}u^{-n+1}\frac{1-\lambda^{2}}{(u-\lambda)(1-\lambda u)}\vert_{u=e^{it}}{\rm d}t\\
 & =\frac{(-1)^{n-k}}{2i\pi}\int_{\partial\mathbb{D}}\frac{1-\lambda^{2}}{\abs{1-\lambda u}^{2}}\left(b_{\lambda}(u)\right)^{k}u^{-n}\frac{{\rm d}u}{u},
\end{align*}
which completes the proof. 
\end{proof}
\begin{figure}[ht!]
\label{Table} \centering

\begin{tabular}{|c|c|c|}
\hline 
Values of $k(n)$ in interval  & Asymptotics of $\widehat{b_{\lambda}^{n}}(k)$  & Region\tabularnewline
\hline 
\hline 
$[0,\,\alpha n]$  & $\frac{1}{\sqrt{k/n}\left[(\alpha_{0}-k/n)(\alpha_{0}^{-1}-k/n)\right]^{1/4}}\frac{1}{\sqrt{n}}\left(\frac{b_{\lambda}(z_{+})}{z_{+}^{k/n}}\right)^{n}$  & I-II\tabularnewline
\hline 
$(\alpha n,\,\alpha_{0}n-\omega(n^{1/3})]$  & $\frac{1}{\sqrt{k/n}\left[(\alpha_{0}^{-1}-k/n)(\alpha_{0}-k/n)\right]^{1/4}}\text{\ensuremath{\frac{1}{\sqrt{n}}}}e^{-\frac{2}{3}n\abs{\gamma_{{\alpha_{0}}^{-1}}}^{3}}$  & III\tabularnewline
\hline 
$[\alpha_{0}n-\omega(n^{1/3}),\,\alpha_{0}n+\omega(n^{1/3})]$  & $\frac{1}{\sqrt{k/n}(\alpha_{0}^{-1}-k/n)^{1/4}}\frac{Ai(n^{2/3}\gamma_{{\alpha_{0}}}^{2})}{n^{1/3}}$  & IV\tabularnewline
\hline 
$[\alpha_{0}n+\omega(n^{1/3}),\,\alpha_{0}^{-1}n-\omega(n^{1/3})]$  & $\frac{1}{\sqrt{n}}\frac{\cos\left(nh(\varphi_{+})-\pi/4\right)}{\sqrt{k/n}\left[(\alpha_{0}^{-1}-k/n)(k/n-\alpha_{0})\right]^{1/4}}$  & V\tabularnewline
\hline 
$[\alpha_{0}^{-1}n-\omega(n^{1/3}),\,\alpha_{0}^{-1}n+\omega(n^{1/3})]$  & $\frac{1}{\sqrt{k/n}(k/n-\alpha_{0})^{1/4}}\frac{Ai(n^{2/3}\gamma_{{\alpha_{0}}^{-1}}^{2})}{n^{1/3}}$  & VI\tabularnewline
\hline 
$[\alpha_{0}^{-1}n+\omega(n^{1/3}),\,\alpha^{-1}n)$  & $\frac{1}{\sqrt{k/n}\left[(k/n-\alpha_{0}^{-1})(k/n-\alpha_{0})\right]^{1/4}}\frac{1}{\sqrt{n}}e^{-\frac{2}{3}n\abs{\gamma_{{\alpha_{0}}^{-1}}}^{3}}$  & VII\tabularnewline
\hline 
$[\alpha^{-1}n,\,\infty)$  & $\frac{1}{\sqrt{k/n}\left[(\alpha_{0}-k/n)(\alpha_{0}^{-1}-k/n)\right]^{1/4}}\frac{1}{\sqrt{n}}\left(\frac{b_{\lambda}(z_{+})}{z_{+}^{k/n}}\right)^{n}$  & VIII\tabularnewline
\hline 
\end{tabular}\caption{Asymptotic formulas for $\widehat{b_{\lambda}^{n}}(k)$ as $n\rightarrow\infty$,
up to numerical factors. For $k$ in Regions I--II and VIII, we have
$\abs{z_{+}^{-k/n}b_{\lambda}(z_{+})}<1$ and the decay of $\widehat{b_{\lambda}^{n}}(k)$
is exponential. The values $\gamma_{\alpha_{0}}$ and $\gamma_{{\alpha_{0}}^{-1}}$
are asymptotically given by $\gamma_{\alpha_{0}}^{2}\asymp\alpha_{0}-k/n$
and $\gamma_{\alpha_{0}^{-1}}^{2}\asymp k/n-\alpha_{0}^{-1}$ respectively
as $k/n\to\alpha_{0}$ and $\alpha_{0}^{-1}$. The formulas for $k$
in Regions III and VII ensure the transition between the exponential
decay (Regions I--II and VIII) and the $\cO(n^{-1/3})$ decay, which
occurs in Regions IV and VI when the distance between $k$ and $\alpha_{0}n$
respectively $\alpha_{0}^{-1}n$ does not exceed $n^{1/3}$. Finally,
the formula for $k$ in Region V ensures the transition to an oscillatory
decay of order $\cO(n^{-1/2})$ when $k$ is away from the boundaries
$\alpha_{0}n$ and $\alpha_{0}^{-1}n$ (we refer to Theorem \ref{Th:Regions_IV_V_VI}~(2) for the definition of $h(\varphi_{+})$).}
\end{figure}

\section{\label{sec:Proofs_asymptotics}Proofs of the asymptotic formulas
for $\widehat{b_{\lambda}^{n}}(k)$ }

As usual, the dominant contribution to integrals of the form~ \eqref{eq:integral}
comes from a small neighborhood around the stationary points of $\Phi_{a}$.
Therefore we start by recalling the critical
points of $\Phi_{a}$. The following lemma is a more complete version
of \cite[Lemma 11]{SzZa2}. We prove it below for completeness. 
\begin{lem}
\label{lem:critical_pts_f} Let $a=k/n$ and let $\Phi(z)=\Phi_{a}(z)$
be defined as above. We have the following assertions. 
\begin{enumerate}
\item If $a\in(\alpha_{0},\,\alpha_{0}^{-1})$, then $\Phi_{a}(\cdot)$
has two distinct stationary points $z_{\pm}\in\partial\mathbb{D}$
of order one, i.e.~$\frac{\partial\Phi_{a}}{\partial z}\left(z_{\pm}\right)=0$
but $\frac{\partial^{2}\Phi_{a}}{\partial z^{2}}\left(z_{\pm}\right)\neq0$,
satisfying $z_{-}=\overline{z_{+}}$. 
\item If $a\in\left\{ \alpha_{0},\alpha_{0}^{-1}\right\} $, then $\Phi_{a}(\cdot)$
has one stationary point $z_{0}\in\left\{ -1,1\right\} $ of order
two, i.e.~$\frac{\partial\Phi_{a}}{\partial z}(z_{0})=\frac{\partial^{2}\Phi_{a}}{\partial z^{2}}\left(z_{0}\right)=0$,
but $\frac{\partial^{3}\Phi_{a}}{\partial z^{3}}\left(z_{0}\right)\neq0$.
More precisely, if $a=\alpha_{0}$ then $z_{0}=-1$ and 
\[
\frac{\partial^{3}\Phi_{\alpha_{0}}}{\partial z^{3}}(z_{0})=\frac{2\lambda(1-\lambda)}{(1+\lambda)^{3}}.
\]
If $a=\alpha_{0}^{-1}$ then $z_{0}=1$ and 
\[
\frac{\partial^{3}\Phi_{\alpha_{0}^{-1}}}{\partial z^{3}}(z_{0})=\frac{2\lambda(1+\lambda)}{(1-\lambda)^{3}}.
\]
\item If $a\notin[\alpha_{0},\,\alpha_{0}^{-1}]$, then $\Phi_{a}(\cdot)$
has two stationary points $z_{\pm}\in\mathbb{R}$ of order one, i.e.
$\frac{\partial\Phi_{a}}{\partial z}\left(z_{\pm}\right)=0$ but $\frac{\partial^{2}\Phi_{a}}{\partial z^{2}}\left(z_{\pm}\right)\neq0$,
satisfying $z_{-}=z_{+}^{-1}$. 
\end{enumerate}
The stationary points $z_{+}$ and $z_{-}$ are given by the formula
\begin{equation}
z_{\pm}=z_{\pm}(a)=\frac{a(1+\lambda^{2})-(1-\lambda^{2})}{2\lambda a}\pm\sqrt{\left(\frac{a(1+\lambda^{2})-(1-\lambda^{2})}{2\lambda a}\right)^{2}-1}\label{eq:rep_z_pm}
\end{equation}
and if $a\notin\left\{ \alpha_{0},\,\alpha_{0}^{-1}\right\} $ then
\begin{eqnarray}
\frac{\partial^{2}\Phi_{a}}{\partial z^{2}}\Bigg|_{z=z_{\pm}} & = & \frac{(1-\lambda^{2})(z_{\pm}-z_{\mp})\lambda}{(z_{\pm}-\lambda)^{2}(1-\lambda z_{\pm})^{2}}.\label{eq:scd_deriv}
\end{eqnarray}
\end{lem}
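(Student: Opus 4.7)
The plan is to reduce the critical point equation to a simple quadratic in $z$ and then analyze its discriminant as $a=k/n$ varies. First I would compute
\[
\frac{\partial \Phi_a}{\partial z}(z)=-\frac{a}{z}+\frac{1}{z-\lambda}+\frac{\lambda}{1-\lambda z},
\]
and clear denominators. A short expansion collapses the numerator to
\[
a\lambda\, z^{2}-\bigl(a(1+\lambda^{2})-(1-\lambda^{2})\bigr)\,z+a\lambda=0,
\]
whose product of roots is $1$. Therefore $z_{-}=1/z_{+}$, and the quadratic formula yields the representation \eqref{eq:rep_z_pm}.

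Next I would read off the trichotomy from the discriminant $D(a)=A(a)^{2}-1$ where $A(a)=(a(1+\lambda^{2})-(1-\lambda^{2}))/(2a\lambda)$. A direct factorization gives $A(a)^{2}-1=\bigl((1-\lambda)^{2}/(2a\lambda)\bigr)\bigl((1+\lambda)^{2}/(2a\lambda)\bigr)(a-\alpha_{0})(a-\alpha_{0}^{-1})$, so $D<0$ exactly when $a\in(\alpha_{0},\alpha_{0}^{-1})$ (two complex-conjugate roots which, having product $1$, lie on $\partial\mathbb D$), $D>0$ when $a\notin[\alpha_{0},\alpha_{0}^{-1}]$ (two real roots, mutual inverses), and $D=0$ precisely at $a\in\{\alpha_{0},\alpha_{0}^{-1}\}$. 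At the endpoints a quick evaluation of $A(a)$ shows $A(\alpha_{0})=-1$ and $A(\alpha_{0}^{-1})=1$, giving $z_{0}=-1$ and $z_{0}=1$ respectively.

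For formula \eqref{eq:scd_deriv} I would differentiate once more to get
\[
\Phi_a''(z)=\frac{a}{z^{2}}-\frac{1}{(z-\lambda)^{2}}+\frac{\lambda^{2}}{(1-\lambda z)^{2}},
\]
and then eliminate $a$ via the critical point relation $a=z_{\pm}(1-\lambda^{2})/\bigl((z_{\pm}-\lambda)(1-\lambda z_{\pm})\bigr)$. Putting everything over the common denominator $z_{\pm}(z_{\pm}-\lambda)^{2}(1-\lambda z_{\pm})^{2}$, the numerator miraculously simplifies to $\lambda(1-\lambda^{2})(z_{\pm}^{2}-1)$. Using $z_{+}z_{-}=1$ to rewrite $z_{\pm}^{2}-1=z_{\pm}(z_{\pm}-z_{\mp})$ cancels a factor of $z_{\pm}$ and gives the stated formula. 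This algebraic simplification is really the only delicate step; the rest is bookkeeping.

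Finally, at the degenerate parameters I would verify $\Phi_a''(z_{0})=0$ (immediate from the coalescence $z_{+}=z_{-}$, or from $z_{0}^{2}-1=0$ in the simplified expression above) and compute
\[
\Phi_a'''(z)=-\frac{2a}{z^{3}}+\frac{2}{(z-\lambda)^{3}}+\frac{2\lambda^{3}}{(1-\lambda z)^{3}}.
\]
Substituting $(a,z_{0})=(\alpha_{0},-1)$ and reducing over the common denominator $(1+\lambda)^{3}$ leaves $2\lambda(1-\lambda)/(1+\lambda)^{3}$; the case $(\alpha_{0}^{-1},1)$ is symmetric, producing $2\lambda(1+\lambda)/(1-\lambda)^{3}$. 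In particular both third derivatives are nonzero, so $z_{0}$ is truly of order two. The main obstacle is merely keeping the algebraic simplification of $\Phi''$ honest; everything else is routine once the quadratic form of the critical point equation is identified.
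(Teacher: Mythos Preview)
Your proposal is correct and follows essentially the same approach as the paper: compute $\Phi_a'$, reduce the critical-point equation to the quadratic $a\lambda z^{2}-(a(1+\lambda^{2})-(1-\lambda^{2}))z+a\lambda=0$, read off the location of the roots from the discriminant, and then plug back into $\Phi_a''$ and $\Phi_a'''$. The only difference is that you spell out the discriminant factorization and the algebraic simplification of $\Phi_a''(z_{\pm})$ in more detail than the paper, which merely asserts that ``plugging in the values of $z_{\pm}$'' yields \eqref{eq:scd_deriv}.
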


\begin{proof}
Computing derivatives we obtain 
\begin{align*}
\frac{\partial\Phi_{a}}{\partial z} & =\frac{1}{z-\lambda}-\frac{a}{z}+\frac{\lambda}{1-\lambda z},\\
\frac{\partial^{2}\Phi_{a}}{\partial z^{2}} & =-\frac{1}{(z-\lambda)^{2}}+\frac{a}{z^{2}}+\frac{\lambda^{2}}{(1-\lambda z)^{2}},\\
\frac{\partial^{3}\Phi_{a}}{\partial z^{3}} & =\frac{2}{(z-\lambda)^{3}}-\frac{2a}{z^{3}}+\frac{2\lambda^{3}}{(1-\lambda z)^{3}}.
\end{align*}
The function $\Phi_{a}(z)$ has a stationary point if and only if
$\partial\Phi_{a}/\partial z=0$, i.e. if and only if 
\begin{align*}
a=1+\frac{\lambda}{z-\lambda}+\frac{\lambda z}{1-\lambda z}.
\end{align*}
Solving the latter for $z$ yields the representation \eqref{eq:rep_z_pm}
for the roots $z_{\pm}$ of $\frac{\partial\Phi_{a}}{\partial z}$.
If $a\notin\left\{ \alpha_{0},\,\alpha_{0}^{-1}\right\} ,$ then $z_{+}$
and $z_{-}$ are distinct. If $a\in(\alpha_{0},\,\alpha_{0}^{-1})$,
then $z_{\pm}\in\partial\mathbb{D}\setminus \{-1,1\}$ and if $a\notin[\alpha_{0},\,\alpha_{0}^{-1}]$,
then $z_{\pm}\in\mathbb{R}\setminus\{-1,1\}$. Plugging in the values of $z_{\pm}$ we obtain formula
\eqref{eq:scd_deriv} for the value of $\frac{\partial^{2}\Phi_{a}}{\partial z^{2}}\Big|_{z=z_{\pm}}$
when $a\notin\left\{ \alpha_{0},\,\alpha_{0}^{-1}\right\} .$ If $a\in\left\{ \alpha_{0},\,\alpha_{0}^{-1}\right\} $,
then $\frac{\partial\Phi_{a}}{\partial z}$ has a unique zero. If
$a=\alpha_{0}^{-1},$ then $z_{+}=z_{-}=1=z_{0}$ and 
\[
\Phi_{\alpha_{0}^{-1}}(1)=\frac{\partial\Phi_{\alpha_{0}^{-1}}}{\partial z}(1)=\frac{\partial^{2}\Phi_{\alpha_{0}^{-1}}}{\partial z^{2}}(1)=0,
\]
with 
\[
\frac{\partial^{3}\Phi_{\alpha_{0}^{-1}}}{\partial z^{3}}(1)=\frac{2\lambda(1+\lambda)}{(1-\lambda)^{3}}\neq0.
\]
If $a=\alpha_{0}$, then $z_{+}=z_{-}=-1=z_{0}$ and 
\[
\frac{\partial\Phi_{\alpha_{0}}}{\partial z}(-1)=\frac{\partial^{2}\Phi_{\alpha_{0}}}{\partial z^{2}}(-1)=0,\qquad\frac{\partial^{3}\Phi_{\alpha_{0}}}{\partial z^{3}}(-1)=\frac{2\lambda(1-\lambda)}{(1+\lambda)^{3}}\neq0.
\]
\end{proof}

\subsection{Proof of Theorem \ref{Th_Regions_I_VII}~(1),~(2)}
\begin{proof} We start with part (1). 
We establish by induction on $k$ that 
\[
(b_{\lambda}^{n})^{(k)}(0)\sim(-\lambda)^{n-k}\left(n(1-\lambda^{2})\right)^{k},\qquad k\geq0,\,n\to\infty.
\]
This asymptotic formula clearly holds for $k=0$. We assume that the
above induction hypothesis holds for all $0\leq j\leq k$. We first
observe that 
$$
(b_{\lambda}^{n})^{(k+1)}(z)  =\left((b_{\lambda}^{n})'\right)^{(k)}(z)=n(1-\lambda^{2})\left((1-\lambda z)^{-2}\cdot b_{\lambda}^{n-1}\right)^{(k)}(z),
$$
and then apply Leibniz formula to the product $z\mapsto(1-\lambda z)^{-2}\cdot b_{\lambda}^{n-1}(z)$,
at $z=0$. Computation shows that 
\[
\left((1-\lambda z)^{-2}\right)^{(j)}(0)=(j+1)!\lambda^{j}
\]
and therefore 
\[
\left((1-\lambda z)^{-2}\cdot b_{\lambda}^{n-1}\right)^{(k)}(0)=\sum_{j=0}^{k}\binom{k}{j}(j+1)!\lambda^{j}\left(b_{\lambda}^{n-1}\right)^{(k-j)}(0).
\]
Applying our induction hypothesis to the factor $\left(b_{\lambda}^{n-1}\right)^{(k-j)}(0)$,
it turns out that the main contribution to the above sum is due to
its first term (whose index is $j=0$): 
$$
\binom{k}{0}(0+1)!\lambda^{0}\left(b_{\lambda}^{n-1}\right)^{(k)}(0)  \sim(-\lambda)^{n-1-k}\left((n-1)(1-\lambda^{2})\right)^{k} \sim(-\lambda)^{n-k-1}\left(n(1-\lambda^{2})\right)^{k}.
$$
We conclude that 
$$
(b_{\lambda}^{n})^{(k+1)}(0)  =n(1-\lambda^{2})\left((1-\lambda z)^{-2}\cdot b_{\lambda}^{n-1}\right)^{(k)}(0)
  \sim(-\lambda)^{n-k-1}\left(n(1-\lambda^{2})\right)^{k+1},
$$
which completes the proof of part (1). \\

Proof of part (2). The integral defining $\widehat{b_{\lambda}^{n}}(k)$
is of the form: 
\begin{equation}
\widehat{b_{\lambda}^{n}}(k)=\frac{1}{2i\pi}\int_{\partial\mathbb{D}}\varphi(z)e^{n\Phi(z)}{\rm d}z\label{eq:int_rep}
\end{equation}
where $\varphi(z)=z^{-1}$ and $\Phi=\Phi_{a}$ with $a=k/n$.

\textit{\uline{Case 1}}\textit{:}\textbf{\textit{ $a\in[\epsilon,\alpha].$
}}We first assume that $a\in[\epsilon,\alpha]$ for a given $\epsilon\in(0,\alpha)$
and apply the saddle point/steepest descent method \cite[Chapter 7]{BlHa},
\cite[Chapters 5-6]{Bru}, \cite[Chapters 7-8]{Cop} to determine
an asymptotic formula for the integral \eqref{eq:int_rep}. This method
essentially consists in deforming the original contour of integration
(here $\partial\mathbb{D}$) into a suitable one, say $C,$ so that
$C$ remains inside the domain $U$ where our integrand is holomorphic
(here $U=\mathbb{C}\setminus\{1/\lambda\}$) and the classical conditions
-- which we recall below and which relate to geometrical considerations
specific to our situation -- are satisfied. We refer to Figure \ref{Plot_1}
and Figure \ref{Plot_2} for an illustration.

1) First of all \textit{$C$ must pass through the relevant saddle
point(s) of} $\Phi$ i.e. the solutions $z_{\pm}$ of the equation
$\Phi'(z)=0$. In our case $a\leq\alpha<\alpha_{0}$ it can be checked
that only $z_{+}$ is relevant: on the interval $[z_{-},\,z_{+}]$
the continuous function 
\[
\psi:z\mapsto e^{\Re\Phi(z)}
\]
achieves its minimum at $z=z_{+},$ its maximum at $z=z_{-}$ and
\[
\psi(z_{+})<1<\psi(z_{-}).
\]

We also observe that the function $a\mapsto z_{+}(a)$ is negative
and monotonically decreasing for $a\in(0,\alpha_{0})$; moreover $\lim_{a\rightarrow0}z_{+}(a)=0$
and $\lim_{a\rightarrow\alpha_{0}}z_{+}(a)=-1$. In particular for
$a\in[\epsilon,\alpha]$ we have $-1<z_{+}(a)<0$ and $z_{+}(a)$
is separated from 0.

2) The level curve 
\[
L(z_{+})=\left\{ z\in U:\:\:\Re\Phi(z)=\Re\Phi(z_{+})\right\} 
\]
passes two times through $z_{+}$ making angle of $\pi/2$ and divides
$U$ into two domains $V(z_{+})$ and $H(z_{+})$ respectively named
valleys and hills separating the neighborhood of the saddle point
$z_{+}$: 
\[
V(z_{+})=\left\{ z\in U:\:\:\Re\Phi(z)<\Re\Phi(z_{+})\right\} ,
\]
\[
H(z_{+})=\left\{ z\in U:\:\:\Re\Phi(z)>\Re\Phi(z_{+})\right\} ,
\]
and the new contour of integration $C$\textit{ must be contained
in }$V(z_{+}).$ Here we observe that $L(z_{+})$ is symmetric with
respect to the real axis, which is the bisector in $H(z_{+})$ of
the angle between the two tangents to the curve $L(z_{+})$ at $z_{+}.$
We have $\psi(z)=1$ for $z\in\partial\mathbb{D}$ and therefore $\partial\mathbb{D}\subset H(z_{+}).$
Furthermore we observe that $H(z_{+})$ contains both a neighborhood
of $1/\lambda$ because $\lim_{z\to1/\lambda}\psi(z)=\infty$,
and a neighborhood of $0$ since $\lim_{z\to0}\psi(z)=\infty.$
The fact that $\lim_{z\to\infty}\psi(z)=0$ shows that $V(z_{+})$
contains a neighborhood of $\infty$ and that the distance from any
point of $L(z_{+})$ to $z_{+}$ is finite. $V(z_{+})$ also contains
a neighborhood of $\lambda$ since $\psi(\lambda)=0.$ Let us finally
mention that $L(z_{+})$ is actually composed of two curves: A closed
curve contained in $\mathbb{D}$ passing two times through $z_{+}$
and another one surrounding $\partial\mathbb{D},$ which is not of
interest for us. We refer to Figure \ref{Plot_1} for a depiction
of the behavior of $L(z_{+}),\,H(z_{+})$ and $V(z_{+})$ near the
unit disc.

3) We recall that the curves of steepest descent respectively steepest
ascent from $z_{+}$, respectively named $S_{d}$ and $S_{a}$, see
Figure \ref{Plot_2}, are the curves defined by the equation 
\[
\Im\Phi(z)=\Im\Phi(z_{+})
\]
and contained in $V(z_{+})$ -- respectively in $H(z_{+})$ -- and
in a neighborhood of $z_{+}$. If $T(z_{+})$ denotes the tangent
at $z_{+}$ to the curve of steepest descent from $z_{+},$ then\textit{
$T(z_{+})$ must also be tangent to the new contour of integration
$C$ at $z_{+}$} and it is more convenient to choose $C$ such that
it coincides with $T(z_{+})$ on a fixed neighborhood of $z_{+}$.
Here $T(z_{+})$ is the vertical line passing through $z_{+}$. It
is usually obtained as the bisector in $V(z_{+})$ of the angle formed
by the two perpendicular tangents to the level curve $L(z_{+})$ at
$z_{+}.$ The other bisector of this angle is part of the real axis,
and necessarily lies in $H(z_{+}):$ $z\mapsto\psi(z)$ achieves its
minimum at $z_{+}$ on $[z_{-},\,0)$ whereas $z\mapsto\psi(z)$ attains
its maximum at $z_{+}$ on $T(z_{+})$, which is required to apply
the method of the steepest descent.

\begin{figure}[h]
\centering \centering \includegraphics[width=0.9\linewidth]{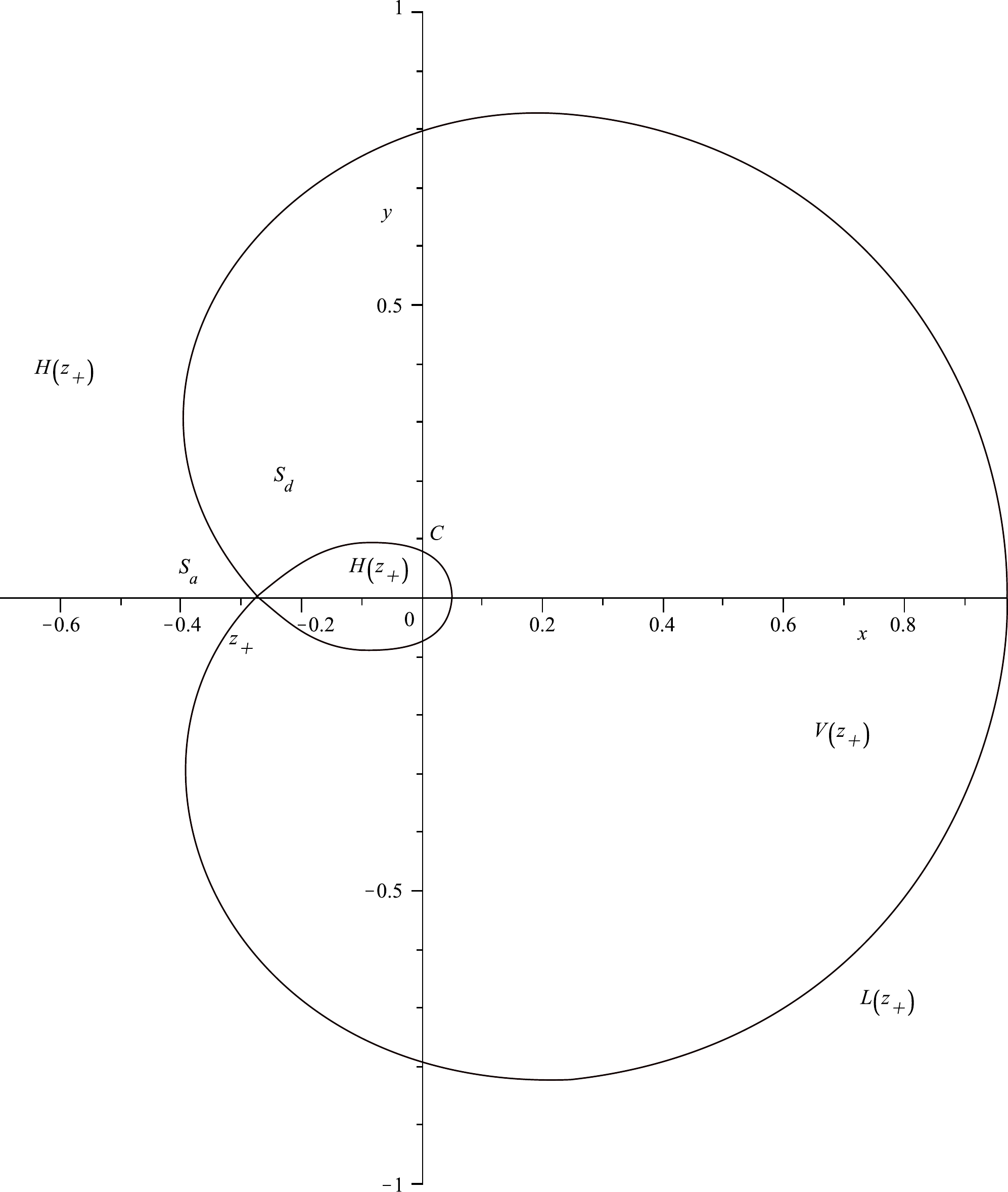}
\caption{This figure depicts $L(z_{+})$ $\,H(z_{+})$ and $V(z_{+})$
where $\lambda=0.5$ and $k/n=0.32$. }
\label{Plot_1} 
\end{figure}

\begin{figure}[h!]
\centering \centering \includegraphics[width=0.9\linewidth]{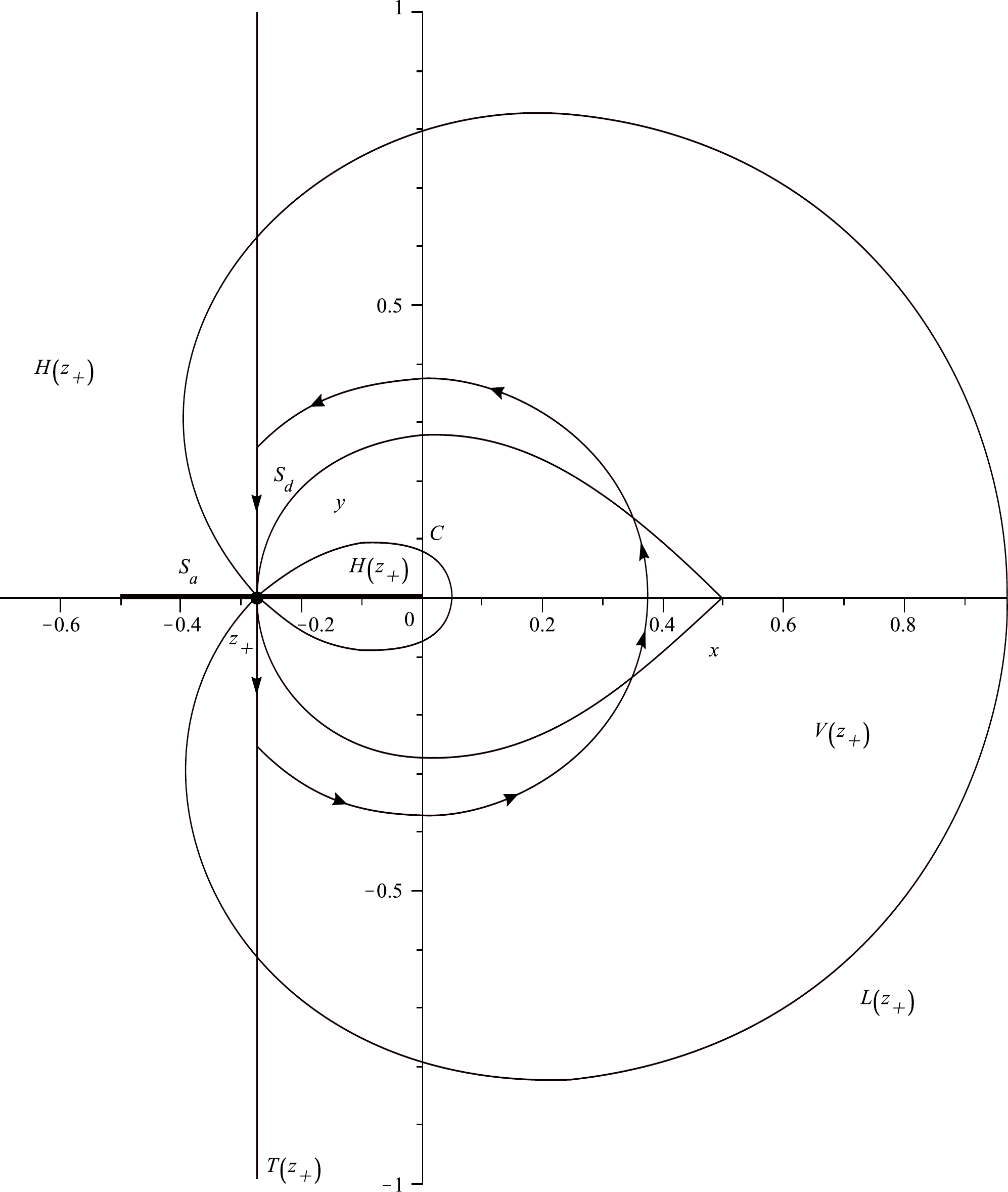}
\caption{This figure depicts the new contour of integration $C$, the level
curve $L(z_{+})$, the curve $S_{d}$ of steepest descent from $z_{+}$,
the curve $S_{a}$ of steepest ascent from $z_{+}$, the tangent $T(z_{+})$
to $S_{d}$ at $z_{+}$, the domain $V(z_{+})$ and the domain $H(z_{+})$,
when $k/n\in[\epsilon n,\alpha n]$. Here we chose $\lambda=0.5$
and $k/n=0.32$. }
\label{Plot_2} 
\end{figure}

If such a choice of $C$ is possible -- which is the case here, see
Figure \ref{Plot_2} -- then \cite[formula (7.2.10)]{BlHa}, \cite[formula (36.7)]{Cop},
\cite[formula (5.7.2)]{Bru} we have 
\begin{align}
\int_{\partial\mathbb{D}}\varphi(z)e^{n\Phi(z)}{\rm d}z & =\int_{C}\varphi(z)e^{n\Phi(z)}{\rm d}z\nonumber \\
 & \sim\varphi(z_{+})e^{n\Phi(z_{+})+i\theta}\sqrt{\frac{2\pi}{n\abs{\Phi''(z_{+})}}},\qquad n\rightarrow\infty,\label{eq:asympt_formula}
\end{align}
where $\theta$ is the angle between $T(z_{+})$ and the real axis.
It follows from Lemma \ref{lem:critical_pts_f}, formula \eqref{eq:scd_deriv},
that 
\begin{equation}
\Phi''(z_{+})=\frac{\lambda(z_{+}-z_{-})(1-\lambda^{2})}{(z_{+}-\lambda)^{2}(1-\lambda z_{+})^{2}},\label{eq:sec_der_Phi_z_p}
\end{equation}
which is strictly positive, and taking into account the fact that
$\theta=3\pi/2$ we find 
\begin{align*}
\int_{\partial\mathbb{D}}\varphi(z)e^{n\Phi(z)}{\rm d}z & \sim i\sqrt{\frac{2\pi}{n}}\frac{1}{z_{+}}\left(\frac{b_{\lambda}(z_{+})}{z_{+}^{k/n}}\right)^{n}\frac{(z_{+}-\lambda)(1-\lambda z_{+})}{\sqrt{\lambda(1-\lambda^{2})(z_{+}-z_{-})}}\\
 & =i\sqrt{\frac{2\pi}{n}}\left(\frac{b_{\lambda}(z_{+})}{z_{+}^{k/n}}\right)^{n}\frac{(z_{+}-\lambda)(z_{-}-\lambda)}{\sqrt{\lambda(1-\lambda^{2})(z_{+}-z_{-})}},
\end{align*}
where we used the identity $z_{+}z_{-}=1$ (see Lemma \ref{lem:critical_pts_f}).
It follows from \eqref{eq:rep_z_pm} that 
\begin{align}
z_{+}-z_{-} & =\frac{\sqrt{(\lambda^{2}-1)(a(\lambda-1)+1+\lambda)(a(1+\lambda)+\lambda-1)}}{a\lambda}\nonumber \\
 & =\frac{1-\lambda^{2}}{a\lambda}\sqrt{(a-\alpha_{0}^{-1})(a-\alpha_{0})},\label{eq:z_p_minus_z_m}
\end{align}
and that 
\begin{equation}
(z_{+}-\lambda)(z_{-}-\lambda)=\frac{1-\lambda^{2}}{a},\label{eq:zp-r_times_zm-r}
\end{equation}
where $a=k/n.$ Therefore 
\begin{align*}
\frac{(z_{+}-\lambda)(z_{-}-\lambda)}{\sqrt{\lambda(1-\lambda^{2})(z_{+}-z_{-})}} & =\frac{1-\lambda^{2}}{a}\frac{1}{\sqrt{\lambda(1-\lambda^{2})}}\sqrt{\frac{a\lambda}{1-\lambda^{2}}}\frac{1}{\left[(a-\alpha_{0}^{-1})(a-\alpha_{0})\right]^{1/4}}\\
 & =\frac{1}{\sqrt{a}\left[(a-\alpha_{0}^{-1})(a-\alpha_{0})\right]^{1/4}}.
\end{align*}
Dividing the above asymptotic formula for $\int_{\partial\mathbb{D}}\varphi(z)e^{n\Phi(z)}{\rm d}z$
by $2i\pi$ we conclude that 
\[
\widehat{b_{\lambda}^{n}}(k)\sim\frac{1}{\sqrt{2n\pi}}\frac{1}{\sqrt{k/n}\left[(\alpha_{0}-k/n)(\alpha_{0}^{-1}-k/n)\right]^{1/4}}\left(\frac{b_{\lambda}(z_{+})}{z_{+}^{k/n}}\right)^{n}.
\]
\\

\textit{\uline{Case 2}}\textit{:}\textbf{\textit{ }}$a=k/n\rightarrow0$
\textit{and} $k\rightarrow\infty$\textbf{\textit{.}} Now we assume
that $k=k(n)$ is such that $k(n)\rightarrow\infty$ and $k(n)/n\rightarrow0$
as $n\rightarrow\infty.$ The situation is essentially the same as
before in the sense that again $z_{+}=z_{+}(k/n)$ is the only relevant
saddle point of $\Phi$, but it is slightly more delicate because
this time $z_{+}$ approaches the origin as $n\rightarrow\infty.$
The new contour of integration $C$ is chosen in\textit{ }$V(z_{+})$
the same way as previously but the straight steepest descent line
$C\cap T(z_{+})$ -- along which $\Phi''(z_{+})(z-z_{+})^{2}$ is
negative -- must lie in a neighborhood of $z_{+}$ where $\Phi$
can be expanded as a convergent power series 
\[
\Phi(z)=\Phi(z_{+})+\sum_{j\geq2}\frac{\Phi^{(j)}(z_{+})}{j!}(z-z_{+})^{j}.
\]
A computation shows that 
\[
z_{+}=-a\frac{\lambda}{1-\lambda^{2}}+\cO(a^{2})
\]
as $a=k/n$ tends to 0, and for $j\geq2$ 
\begin{align}
\frac{\Phi^{(j)}(z_{+})}{j!} & =\frac{(-1)^{j}a}{jz_{+}^{j}}+\frac{\lambda^{j}}{j}\frac{1}{(1-\lambda z_{+})^{j}}-\frac{(-1)^{j}}{j(z_{+}-\lambda)^{j}}\label{eq:Taylor_coeff_Phi}\\
 & \sim\frac{(-1)^{j}a}{jz_{+}^{j}}\sim\frac{1}{ja^{j-1}}\left(\frac{1-\lambda^{2}}{\lambda}\right)^{j}.\nonumber 
\end{align}
In particular, for large enough $n$ the radius of convergence $R$
of the power series of $\Phi$ near $z_{+}$ is proportional to $a.$
We put 
\[
G(z)=\sum_{j\geq3}\frac{\Phi^{(j)}(z_{+})}{j!}(z-z_{+})^{j}.
\]
We follow and adapt the approach from \cite[p. 92-93]{Cop} to our
situation. Let $x=2/5$ and $u_{k}=k^{-x}$ so that $\lim_{k\rightarrow\infty}u_{k}=0,$
$\lim_{k\rightarrow\infty}ku_{k}^{3}=0$ and $\lim_{k\rightarrow\infty}ku_{k}^{2}=\infty$.
We choose $C$ such that $C\cap T(z_{+})$ lies in the disc $\abs{z-z_{+}}\leq\rho$
where $\rho=au_{k}=\frac{k}{n}u_{k}.$ For $z$ in the disc $\abs{z-z_{+}}\leq\rho$
we have 
\begin{align*}
\abs{G(z)} & \leq\sum_{j\geq3}\frac{\abs{\Phi^{(j)}(z_{+})}}{j!}\abs{z-z_{+}}^{j}\\
 & \lesssim a\sum_{j\geq3}\frac{1}{j}\left(\frac{1-\lambda^{2}}{\lambda}u_{k}\right)^{j}\lesssim au_{k}^{3}.
\end{align*}
It follows that for $z\in C\cap T(z_{+})$ we have 
\[
\exp\left(n\Phi(z)\right)=\exp\left(n\Phi(z_{+})+n\frac{\Phi''(z_{+})}{2}(z-z_{+})^{2}\right)\cdot\left(1+\cO\left(ku_{k}^{3}\right)\right).
\]
Observing that $\Abs{\frac{z_{+}-z}{z_{+}}}\lesssim u_{k}$, we obtain 
$$
\varphi(z)  =\frac{1}{z_{+}+z-z_{+}}=\frac{1}{z_{+}\left(1+\frac{z-z_{+}}{z_{+}}\right)}
  =\frac{1}{z_{+}}\left(1+\sum_{j\geq1}\frac{1}{z_{+}}\left(\frac{z_{+}-z}{z_{+}}\right)^{j}\right)=\frac{1}{z_{+}}+\cO\left(u_{k}\right).
$$
Taking into account the fact that $x<2^{-1}$ we find that for $z\in C\cap T(z_{+})$
\[
\varphi(z)\exp\left(n\Phi(z)\right)=z_{+}^{-1}\exp\left(n\Phi(z_{+})+n\frac{\Phi''(z_{+})}{2}(z-z_{+})^{2}\right)\cdot\left(1+\cO\left(ku_{k}^{3}\right)\right)
\]
The contribution of the neighbourhood $\abs{z-z_{+}}\leq\rho$ of
the saddle point $z_{+}$ is therefore 
\begin{multline}
\int_{C\cap T(z_{+})}\varphi(z)\exp(n\Phi(z))\d z\\
=z_{+}^{-1}\exp\left(n\Phi(z_{+})\right)\int_{C\cap T(z_{+})}\exp\left(n\frac{\Phi''(z_{+})}{2}(z-z_{+})^{2}\right)\d z\cdot\left(1+\cO\left(ku_{k}^{3}\right)\right)\label{eq:copson}
\end{multline}
It follows from \eqref{eq:sec_der_Phi_z_p}, \eqref{eq:z_p_minus_z_m},
and \eqref{eq:zp-r_times_zm-r} that 
\begin{equation}
\Phi''(z_{+})=\frac{k}{n}z_{+}^{-2}\left(\alpha_{0}-\frac{k}{n}\right)^{1/2}\left(\alpha_{0}^{-1}-\frac{k}{n}\right)^{1/2}.\label{eq:sec_der_Phi_z_p_2}
\end{equation}
We let $r$ vary from $-\rho$ to $\rho$ and put $z=z_{+}-ir$. Then
\eqref{eq:copson} gives 
\begin{multline*}
\int_{C\cap T(z_{+})}\varphi(z)\exp(n\Phi(z))\d z\\
=-iz_{+}^{-1}\left(1+\cO\left(ku_{k}^{3}\right)\right)\exp\left(n\Phi(z_{+})\right)\int_{-\rho}^{\rho}\exp\left(-kz_{+}^{-2}\frac{\left(\alpha_{0}-\frac{k}{n}\right)^{1/2}\left(\alpha_{0}^{-1}-\frac{k}{n}\right)^{1/2}}{2}r^{2}\right)\d r.
\end{multline*}
Changing the variable $r$ by 
\[
v=\sqrt{k}z_{+}^{-1}\frac{\left(\alpha_{0}-\frac{k}{n}\right)^{1/4}\left(\alpha_{0}^{-1}-\frac{k}{n}\right)^{1/4}}{\sqrt{2}}r,
\]
we get 
\begin{multline}
\int_{C\cap T(z_{+})}\varphi(z)\exp(n\Phi(z))\d z\\
=i\left(1+o(1)\right)\exp\left(n\Phi(z_{+})\right)\sqrt{\frac{2}{k}}\int_{-\omega}^{\omega}\exp\left(-v^{2}\right)\d v,\label{eq:Copson2}
\end{multline}
where 
$$
\omega  =\sqrt{\frac{k}{2}}\abs{z_{+}}^{-1}\left(\alpha_{0}-\frac{k}{n}\right)^{1/4}\left(\alpha_{0}^{-1}-\frac{k}{n}\right)^{1/4}\rho
 \sim\sqrt{\frac{k}{2}}\frac{1-\lambda^{2}}{\lambda}u_{k}\asymp\sqrt{k}u_{k},
$$
and, in particular, $\omega$ tends to $\infty$ with $k$. Moreover, 
as $k\rightarrow\infty$, we have 
$$
\int_{\omega}^{\infty}\exp(-v^{2})\d v  =\cO\left(\frac{\exp(-\omega^{2})}{\omega}\right)
  =\cO\left(\frac{\exp(-Cku_{k}^{2})}{\sqrt{k}u_{k}}\right),
$$
for some absolute constant $C>0$. Therefore,  
\[
\int_{C\cap T(z_{+})}\varphi(z)\exp(n\Phi(z))\d z=i\exp\left(n\Phi(z_{+})\right)\sqrt{\frac{2\pi}{k}}\cdot\left(1+o(1)\right),
\]
and, hence,  
\[
\frac{1}{2i\pi}\int_{C\cap T(z_{+})}\varphi(z)\exp(n\Phi(z))\d z\sim\frac{1}{\sqrt{2k\pi}}\left(\frac{b_{\lambda}(z_{+})}{z_{+}^{k/n}}\right)^{n}.
\]
\\
 To complete the proof we choose $C$ so that $C\setminus T(z_{+})$
coincides with the circle centered at $0$ of radius $\abs{z_{+}}=-z_{+}$
intersected with the half-plane $\left\{ \Re z>z_{+}\right\} $, and
show that 
\[
\frac{1}{2i\pi}\int_{C\setminus T(z_{+})}\varphi(z)\exp(n\Phi(z))\d z=o\left(\frac{1}{2i\pi}\int_{C\cap T(z_{+})}\varphi(z)\exp(n\Phi(z))\d z\right)
\]
as $k\rightarrow\infty$. The endpoints of $C\setminus T(z_{+})$
are denoted by $\abs{z_{+}}e^{i(\pi-\eta)}$ and $\abs{z_{+}}e^{i(-\pi+\eta)}$
where $\eta>0$ is such that 
\[
\eta\asymp\sin\eta\asymp\frac{\rho}{\abs{z_{+}}}\asymp u_{k}.
\]
We write 
\begin{multline*}
\frac{1}{2i\pi}\int_{C\setminus T(z_{+})}\varphi(z)\exp(n\Phi(z))\d z\\
=\varphi(z_{+})\exp(n\Phi(z_{+}))\frac{1}{2i\pi}\int_{C\setminus T(z_{+})}\frac{\varphi(z)\exp(n\Phi(z))}{\varphi(z_{+})\exp(n\Phi(z_{+}))}\d z,
\end{multline*}
put $z=\abs{z_{+}}e^{it}=-z_{+}e^{it},$ and observe that 
\[
\Abs{\frac{\varphi(z)\exp(n\Phi(z))}{\varphi(z_{+})\exp(n\Phi(z_{+}))}}=\Abs{\frac{b_{\lambda}\left(\abs{z_{+}}e^{it}\right)}{b_{\lambda}(z_{+})}}^{n},\qquad\abs{z}=\abs{z_{+}}.
\]
A direct computation shows that 
\begin{equation}
\Abs{b_{\lambda}\left(\abs{z_{+}}e^{it}\right)}^{2}=1-\frac{(1-\lambda^{2})(1-\abs{z_{+}}^{2})}{1+\lambda^{2}\abs{z_{+}}^{2}-2\lambda\abs{z_{+}}\cos t}.\label{eq:b_square_identity}
\end{equation}
This function is increasing on $[0,\pi]$ and decreasing on $[-\pi,0]$.
Therefore,  
\[
\Abs{\frac{\varphi(z)\exp(n\Phi(z))}{\varphi(z_{+})\exp(n\Phi(z_{+}))}}\leq\Abs{\frac{b_{\lambda}\left(\abs{z_{+}}e^{i(\pi-\eta)}\right)}{b_{\lambda}(z_{+})}}^{n}.
\]
By \eqref{eq:b_square_identity} we obtain that there exists $C>0$
such that 
\[
\Abs{\frac{b_{\lambda}\left(\abs{z_{+}}e^{i(\pi-\eta)}\right)}{b_{\lambda}(z_{+})}}\leq1-Ca\eta^{2},
\]
which proves that 
\begin{equation}
\frac{1}{2i\pi}\int_{C\setminus T(z_{+})}\frac{\varphi(z)\exp(n\Phi(z))}{\varphi(z_{+})\exp(n\Phi(z_{+}))}\d z=\cO\left(\exp(-Cku_{k}^{2})\right).\label{eq:remainder_SDM}
\end{equation}
This completes the proof in case 2.

\textit{\uline{Case 3}}\textit{: }\textbf{\textit{$a\in[\alpha^{-1},\epsilon^{-1}]$.}}
A discussion similar to that for case 1 leads to the same formula
for $a=k/n\in[\alpha^{-1},\epsilon^{-1}]$ where $\epsilon\in(0,\alpha)$
is fixed. We first reproduce the three steps from the first case required
to deform the original contour of integration $\partial\mathbb{D}$
into the suitable one $C,$ which remains inside the domain $U$ where
our integrand is holomorphic. The geometrical considerations corresponding
to conditions (1)--(3) are sometimes slightly different in this case.
We detail them below for completeness and refer to Figure \ref{Plot_3}
for an illustration.

1) As in case 1, $C$ should pass through the relevant saddle point
of $\Phi$. Again, it can be checked that only the critical point
$z_{+}$ is relevant: For $z$ on the interval $[\lambda,\lambda^{-1})$
the continuous function $z\mapsto\psi(z)$ achieves its minimum at
$z=z_{+},$ its maximum at $z=z_{-}$ and 
\[
\psi(z_{+})<1<\psi(z_{-}).
\]

We also observe that the function $a\mapsto z_{+}(a)$ is nonnegative
and monotonically increasing for $a\in(\alpha_{0}^{-1},\epsilon^{-1})$;
moreover $\lim_{a\rightarrow\alpha_{0}^{-1}}z_{+}(a)=1$ and $\lim_{a\rightarrow+\infty}z_{+}(a)=1/\lambda$.
In particular for $a\in[\epsilon,\alpha]$ we have $1<z_{+}(a)<1/\lambda$.

2) Again, the level curve $L(z_{+})$ passes two times through $z_{+}$
making angle of $\pi/2$ and divides $U$ into $V(z_{+})$ (valleys)
and $H(z_{+})$ (hills). The new contour of integration $C$ will
be contained in $V(z_{+})$ as required. $L(z_{+})$ is symmetric
with respect to the real axis and it consists again of two parts.
The first one is not of interest for us: it is a closed curve contained
in $\mathbb{D}$ surrounding $\lambda.$ The second one, which is
the one we are interested in, is a closed curve that surrounds $\partial\mathbb{D}$
to the left of $z_{+}$ and a neighborhood of $1/\lambda$ to the
right of $z_{+}.$ As in case 1, the real axis is the bisector in
$H(z_{+})$ of the angle between the two tangents to this part of
$L(z_{+})$ at $z_{+}.$ Finally $H(z_{+})$ still contains $\partial\mathbb{D}$
since $\psi(z)=1$ for $z\in\partial\mathbb{D},$ and it also contains
a neighborhood of $1/\lambda$ because $\lim_{z\mapsto1/\lambda}\psi(z)=\infty$.
$V(z_{+})$ contains a neighborhood of $\infty$ because $\lim_{z\mapsto\infty}\psi(z)=0$
and also contains a neighborhood of $\lambda$ because $\lim_{z\mapsto\lambda}\psi(z)=0.$

3) We do not reproduce the discussion on the curves of steepest descent/ascent
$S_{d}$ and $S_{a}$ from $z_{+}$, since it is identical to the
previous one (case 1). (This time $z\mapsto\psi(z)$ attains its
minimum on $(\lambda,1/\lambda)$ at $z_{+}$ whereas $z\mapsto\psi(z)$
attains its maximum at $z_{+}$ on $T(z_{+})$.)

\begin{figure}[h]
\centering \centering \includegraphics[width=0.9\linewidth]{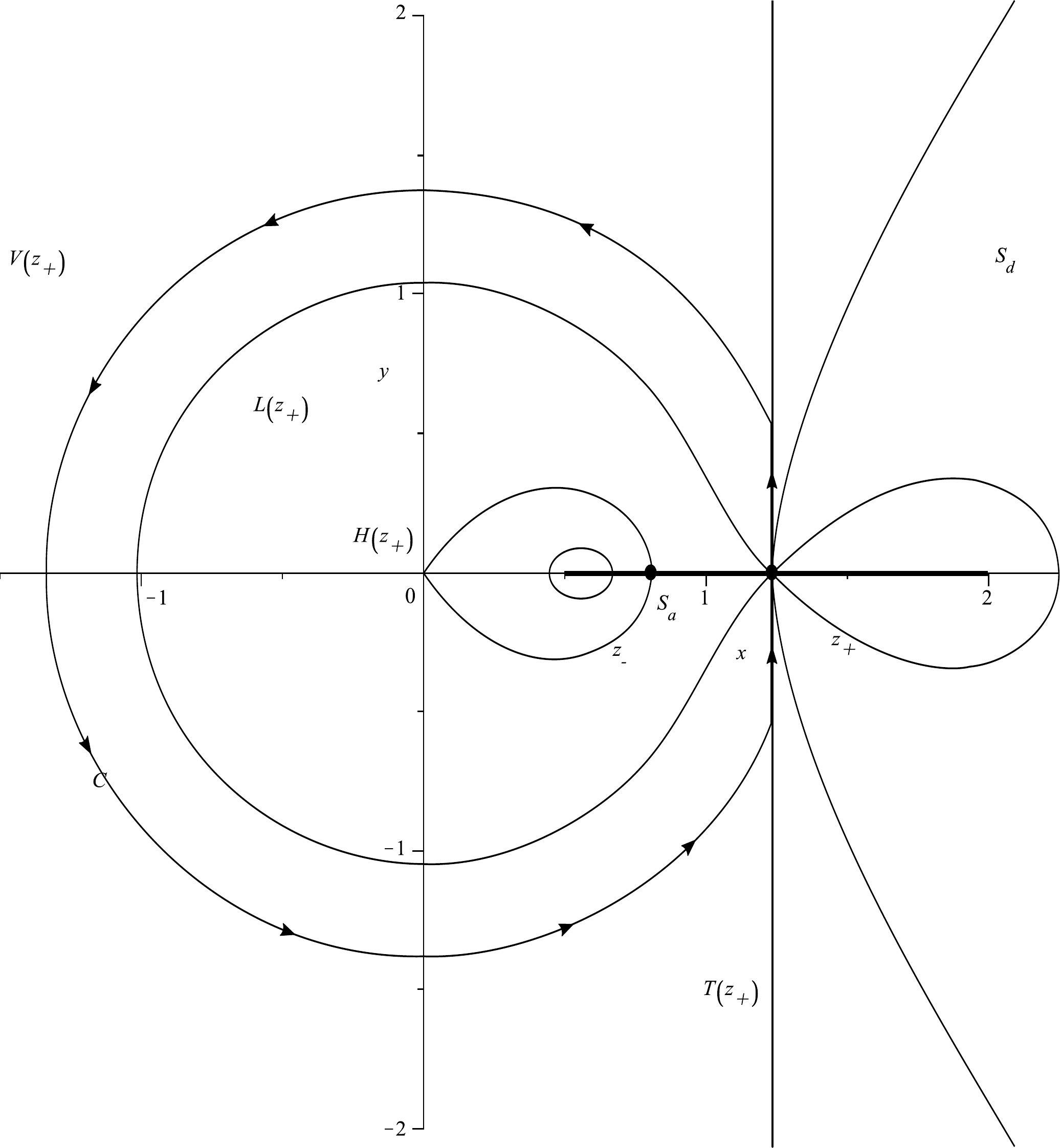}
\caption{This figure depicts the new contour of integration $C$, the level
curve $L(z_{+})$, the curve $S_{d}$ of steepest descent from $z_{+}$,
the curve $S_{a}$ of steepest ascent from $z_{+}$, the tangent $T(z_{+})$
to $S_{d}$ at $z_{+}$, the domain $V(z_{+})$ and the domain $H(z_{+})$,
when $k/n\in[\alpha^{-1}n,\epsilon^{-1}n]$. Here we chose $\lambda=0.5$
and $k/n=3.3$. }
\label{Plot_3} 
\end{figure}

Since such a choice of $C$ is possible -- see Figure \ref{Plot_3}
for an illustration -- the asymptotic formula \eqref{eq:asympt_formula}
used in case 1 applies also here and we get 
\[
\widehat{b_{\lambda}^{n}}(k)=\frac{1}{2i\pi}\int_{C}\varphi(z)e^{n\Phi(z)}{\rm d}z\sim\frac{1}{2i\pi}\varphi(z_{+})e^{n\Phi(z_{+})+i\theta}\sqrt{\frac{2\pi}{n\abs{\Phi''(z_{+})}}},
\]
as $n\rightarrow\infty$, where $\theta=\pi/2$ is the angle between
$T(z_{+})$ and the real axis. The rest of the proof is identical
to the one we have detailed in case 1.

\textit{\uline{Case }}\textit{4:}\textbf{\textit{ }}$a=k/n\in[\alpha^{-1}n,\infty)$
and $k/n\rightarrow\infty$. This case is analogous to case 2. As
in case 3, $z_{+}=z_{+}(k/n)$ is the only relevant saddle point of
$\Phi$, but this time $z_{+}$ approaches $1/\lambda$ as $n\rightarrow\infty.$
The new contour of integration $C$ is chosen in\textit{ }$V(z_{+})$
the same way as in case 3 but the straight steepest descent line $C\cap T(z_{+})$
-- along which $\Phi''(z_{+})(z-z_{+})^{2}$ is negative -- must
lie (as in case 2) in a neighborhood of $z_{+}$ where $\Phi$ can
be expanded as a convergent power series 
\[
\Phi(z)=\Phi(z_{+})+\sum_{j\geq2}\frac{\Phi^{(j)}(z_{+})}{j!}(z-z_{+})^{j}
\]
whose radius of convergence -- which can be computed using \eqref{eq:Taylor_coeff_Phi}
-- is this time proportional to $1/a$, whereas it was proportional
to $a$ when $k/n\rightarrow0$ (see case 2). We omit the rest of
the proof, which is identical to the one we have detailed in case
2. 
\end{proof}

\subsection{Proof of Proposition \ref{Prop_Airy_fcts}}
\begin{proof}[Proof of Proposition \ref{Prop_Airy_fcts}.]
We omit the proof of the second asymptotic formula (i.e. when $k/n$
is in a neighborhood of $\alpha_{0}^{-1}$) because it follows from
an almost word-for-word adaptation of the one of \cite[Proposition 10]{SzZa2}
(the part corresponding to (2)--(4), replacing the factor $(1-z^{-2})$
by 1). We choose to sketch the proof of the asymptotic formulas for
$k/n$ in a neighborhood of $\alpha_{0}$ , which is similar to those
in \cite{SzZa2,SzZa4}, but where computations are slighlty different.
We refer to the proof of \cite[Proposition 17]{SzZa4} for more technical
details. Again, we recall that for any $k$ and $n$: 
\[
\widehat{b_{\lambda}^{n}}(k)=\frac{1}{2i\pi}\int_{\partial\mathbb{D}}\frac{e^{n\Phi(z)}}{z}\d z
\]
where $\Phi=\Phi_{a}$ and $a=k/n$. It is explained in \cite{SzZa2,SzZa1}
that the standard method of stationary phase cannot be applied when
$k/n$ approaches $\alpha_{0}^{-1}$ because in this case the saddle
points $z_{+}$ and $z_{-}$ which are of order 1, are coalescing
to the saddle point $z_{0}=1,$ which is of order 2. If $k/n$ approaches
$\alpha_{0}$, then the same phenomena occurs and $z_{\pm}$ are coalescing
this time to $z_{0}=-1.$ As the main contribution of the above integral
is due to the critical points $z_{\pm}=z_{\pm}(a)$ of $\Phi_{a}$,
if $a<\alpha_{0}$ it is required to locally deform the unit circle
to a new contour that passes through $z_{+}$, $z_{-}$ (which are
real and negative) and $-1$. If $a>\alpha_{0}$, then the critical
points $z_{\pm}\in\partial\mathbb{D}$ (are complex conjugates) and
there is no need to deform the contour as the unit circle already
passes through $z_{+}$, $z_{-}$ and $-1$: In this case the proof
below is actually reduced to an application of the uniform version
of the method of stationary phase \cite[Section 2.3]{Bor}. Let $\mathcal{D}(-1,\varepsilon)$
be the closed disk centered at $-1$ of radius $\varepsilon>0$ chosen
in such a way that $z_{\pm}\in\mathcal{D}(-1,\varepsilon)$. We denote
by $\mathcal{C}_{\epsilon}\subset D(-1,\varepsilon)$ a corresponding
local deformation of the unit circle $\partial\mathbb{D}$ and illustrate
it below. \vskip 5pt 
\begin{figure}[H]
\begin{minipage}[c]{0.43\textwidth}%
\hskip 0.8cm \begin{tikzpicture}[scale=1.8]
\scalebox{-1}[1]{\draw (11,1) [dashed] arc (90:15:1);
\draw [dashed] (11,1) arc (90:0:1);
\draw [dashed] (11,-1) arc (270:360:1);
\draw [dashed] (11,-1) arc (270:345:1);

\draw [color=blue,very thick] (11.96,0.27) arc (16:20:1);
\draw [color=blue,very thick] (11.96,0.27) to[out=-90,in=90](12.2,0);
\draw [color=blue,very thick](12.2,0) to (11.8,0);
\draw [color=blue,very thick] (11.8,0) to[out=-90, in=90](11.96,-0.27);
\draw [color=blue,very thick] (11.945,-0.33) arc (346:350:1);}

\draw [color=blue] (-12.1,0.3) node [scale=1] {$\mathcal{C}_\varepsilon$};
\draw (-11.8,-0.05)--(-11.8,0.05);
\draw (-12,-0.05) -- (-12,0.05);
\draw (-12.2,-0.05)--(-12.2,0.05);
\draw (-12.1,-0.1) node[left,scale=1] {$z_-$};
\draw (-11.85,-0.1) node[right,scale=1]{$z_+$};

\draw (-12,-0.19) node[above,scale=0.6] {$-1$};
\end{tikzpicture} \caption{The contour $\mathcal{C}_{\epsilon}$ for $\frac{k}{n}<\alpha_{0}$.}
\end{minipage}\hfill{}%
\begin{minipage}[c]{0.43\textwidth}%
\hskip 1cm \begin{tikzpicture}[scale=1.8]

\draw [dashed] (-11,1) arc (90:180:1);
\draw [dashed] (-11,-1) arc (270:180:1);

\draw[color = blue, very thick] (-11.9,0.42) arc (150:182:0.8) ;
\draw[color = blue, very thick] (-11.9,-0.42) arc (210:178:0.8) ;
\draw [color=blue] (-12.1,0.3) node [scale=1] {$\mathcal{C}_\varepsilon$};

\draw (-12,0.2) node[right] {$z_+$};
\draw (-12,-0.2) node[right]{$z_-$};
\draw (-12.03, 0.2) -- (-11.93,0.2);
\draw (-12.03, -0.2) -- (-11.93,-0.2);
\draw (-12.13,0.1) node[below,scale=0.6] {$-1$};
\end{tikzpicture}

\caption{The contour $\mathcal{C}_{\epsilon}$ for $\frac{k}{n}>\alpha_{0}$.}
\end{minipage}
\end{figure}

We shall use a uniform version of the steepest descent method \cite{CFU}
as described in \cite[p.  369--376]{BlHa}, where the case of
two nearby saddle points is considered and the first step is to observe
that: 
\[
\widehat{b_{\lambda}^{n}}(k)\sim\frac{1}{2i\pi}\int_{\mathcal{C}_{\varepsilon}}\frac{e^{n\Phi(z)}}{z}\d z,\qquad n\rightarrow\infty,
\]
the contribution to the integral \eqref{eq:integral} from the part
of the contour outside $\mathcal{D}(-1,\varepsilon)$ being asymptotically
smaller than the integral itself \cite[ Subsection 5]{CFU}. This
can usually be proved by the familiar arguments of the ordinary method
of steepest descents, similar to those we previously used to prove
\eqref{eq:remainder_SDM}. Following \cite[(9.2.6)]{BlHa}, to simplify
the dependence of $z_{\pm}$ on $k/n$ we change the variable of integration
via a locally one-to-one transformation, implicitly given by $s=s_{a}(z)$
solving the equation 
\begin{equation}
\Phi(z)=-\left(\frac{1}{3}s^{3}-\gamma_{\alpha_{0}}^{2}s\right)+\eta,\label{eq:cubic_transfo}
\end{equation}
where the parameters $\gamma=\gamma_{\alpha_{0}}$ and $\eta$ are
determined in such a way that $s=0$ is mapped to $z=-1$ and the saddle points
$z_{\pm}$ are mapped symmetrically to $s=\pm\gamma$. For $z=z(s)$
to define a conformal map of $\mathcal{D}(-1,\varepsilon)$ it is
necessary that $\gamma_{\alpha_{0}}^{3}$ and $\eta$ be respectively
defined by \eqref{eq:def_g3_gen} and 
\[
\eta=\frac{\Phi(z_{+})+\Phi(z_{-})}{2},
\]
so that 
\[
\gamma^{2}=\gamma_{\alpha_{0}}^{2}=\frac{(1+\lambda)\left(\alpha_{0}-k/n\right)}{(\lambda(1-\lambda))^{1/3}}+o(\alpha_{0}-k/n),
\]
and 
\[
\eta=i\pi\left(1-\frac{k}{n}\right).
\]
For each value of $z$, \eqref{eq:cubic_transfo} defines three possible
values of $s,$ that is, there are three branches of the inverse transformation.
It is shown in \cite{CFU} that there is one branch of the transformation
\eqref{eq:cubic_transfo} that defines, for each $a$ in a neighborhood
of $\alpha_{0}$, a conformal map of $\mathcal{D}(-1,\varepsilon)$.
More precisely, the transformation \eqref{eq:cubic_transfo} has exactly
one branch $s=s(z,a)$ that can be expanded into a power series in
$z$ with coefficients that are continuous in $a$. On this branch
the points $z=z_{\pm}$ correspond to $s=\pm\gamma_{\alpha_{0}}$,
and the mapping of $z$ to $s$ is one-to-one on $\mathcal{D}(-1,\varepsilon)$.
This is an analog of \cite[Proposition 12]{SzZa2} and of \cite[Proposition 9]{SzZa1}
for $k/n$ in a neighborhood of $\alpha_{0}$ instead of $k/n$ close
to $\alpha_{0}^{-1}.$ Following \cite[Section 9.2]{BlHa} we get
\[
\frac{1}{2i\pi}\int_{{\cC_{\epsilon}}}\exp\left(n\Phi_{a}(z)\right)\frac{\d z}{z}=\frac{1}{2i\pi}\int_{{\hat{\cC}_{\epsilon}}}G_{0}(s)\exp\left(n\left(-\frac{s^{3}}{3}+\gamma^{2}s+\eta\right)\right)\d s
\]
where we made the notation less cluttered writing briefly $\gamma^{2}$
for $\gamma_{\alpha_{0}}^{2}$, and where 
\[
G_{0}(s)=\frac{1}{z(s)}\frac{\d z}{\d s}
\]
is regular on the image $\hat{D}(-1,\epsilon)$ of $D(-1,\epsilon)$
under the transformation $z\mapsto s(z)$. We exploit the fact that
if the integrand vanishes near a critical point then its contribution
to the asymptotic expansion is diminished. Thus we expand 
\[
G_{0}(s)=A_{0}+A_{1}s+(s^{2}-\gamma^{2})H_{0}(s),
\]
with $A_{0},\,A_{1},$ and $H_{0}$ to be determined. As long as $H_{0}$
is regular in $\hat{D}(-1,\epsilon)$ the last term of the above identity
vanishes at the two saddle points $s=\pm\gamma$. We can then determine
$A_{0},$ $A_{1}$ by setting $s=\pm\gamma$ in the above equality
to get 
\begin{equation}
A_{0}=\frac{G_{0}(\gamma)+G_{0}(-\gamma)}{2},\quad A_{1}=\frac{G_{0}(\gamma)-G_{0}(-\gamma)}{2\gamma}.\label{eq:A0A1}
\end{equation}
With $A_{0},$ $A_{1}$ defined by these formulas, it is shown in
\cite[p.~373]{BlHa} that $H_{0}=\frac{G_{0}(s)-A_{0}-A_{1}s}{s^{2}-\gamma^{2}}$
is regular in ${\hat{D}(-1,\epsilon)}$ as desired. We conclude that
$$
\int_{\cC_{\epsilon}}\exp\left(n\Phi_{a}(z)\right)\frac{\d z}{z}\sim e^{i\pi\left(n-k\right)}\int_{\hat{\cC}_{\epsilon}}(A_{0}+A_{1}s)\exp\left(n\left(-\frac{s^{3}}{3}+\gamma^{2}s\right)\right)\d s.\label{F}
$$
Following the procedure described in \cite[p.~371--375]{BlHa} we
consider a contour $C_{1}$ which is asymptotically equivalent to
$\hat{\mathcal{C}}_{\epsilon}$. This means that the contribution
of $C_{1}$ near the critical points coincides with that of $\hat{\mathcal{C}}_{\epsilon}$,
but $C_{1}$ continues to $\infty$ as a contour of steepest descent.
$C_{1}$ starts at infinity with points of argument $-2\pi/3$ and
ends at infinity with points of argument $2\pi/3$. See Figure~\ref{fig:asyEquiv},
Figure~\ref{fig:C_1_C_hat_1} and Figure~\ref{fig:C_1_C_hat_2}
below, for a description of $C_{1}$ and $\hat{\mathcal{C}}_{\epsilon}$.
We refer to \cite[Section 7.2]{BlHa} for a detailed description of
such contours.

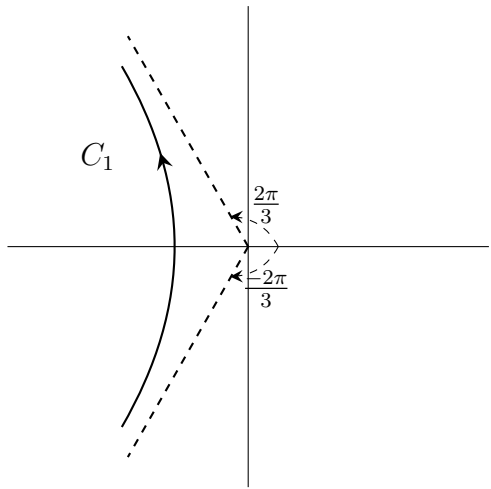
\begin{figure}[ht!]
\centering \begin{tikzpicture}[scale=0.8]
\tikzstyle arrowstyle=[scale=1.5]
\draw (-4,0) -- (4,0);
\draw (0,-4) -- (0,4);
\draw[thick,postaction={decorate,decoration={markings,mark=at position .75 with {\arrow[arrowstyle]{stealth}}}}] (-2.1,-3) to[bend right] (-2.1,3);
\draw[dashed,postaction={decorate,decoration={markings,mark=at position 1 with {\arrow[arrowstyle]{stealth}}}}] (0.5,0) to[bend right] (-0.3,0.5);
\draw[dashed,postaction={decorate,decoration={markings,mark=at position 1 with {\arrow[arrowstyle]{stealth}}}}] (0.5,0) to[bend left] (-0.3,-0.5);
\draw (0.3,0.7) node {$\frac{2 \pi}{3}$};
\draw (0.3,-0.7) node {$\frac{-2 \pi}{3}$};
\draw (-2.5,1.5) node {$C_1$};
\draw[thick, dashed] (0,0) to (-2,3.5);
\draw[thick, dashed] (0,0) to (-2,-3.5);
\end{tikzpicture} \caption{Introduction of the asymptotically equivalent contour $C_{1}$.}
\label{fig:asyEquiv} 
\end{figure}

\begin{figure}[H]
\centering %
\begin{minipage}[c]{0.5\textwidth}%
\begin{tikzpicture}[scale=0.8]
\tikzstyle arrowstyle=[scale=1.5]
\draw (-4,0) -- (4,0);
\draw (0,-4) -- (0,4);
\draw[thick,postaction={decorate,decoration={markings,mark=at position .75 with {\arrow[arrowstyle]{stealth}}}}] (-2.1,-3) to[bend right] (-2.1,3);
\draw[dashed,postaction={decorate,decoration={markings,mark=at position 1 with {\arrow[arrowstyle]{stealth}}}}] (0.5,0) to[bend right] (-0.3,0.5);
\draw[dashed,postaction={decorate,decoration={markings,mark=at position 1 with {\arrow[arrowstyle]{stealth}}}}] (0.5,0) to[bend left] (-0.3,-0.5);
\draw (0.3,0.7) node {$\frac{2 \pi}{3}$};
\draw (0.3,-0.7) node {$- \frac{2 \pi}{3}$};
\draw (-2.5,1.5) node {$C_1$};
\draw (1.24,-0.15) -- (1.24,0.15);
\draw (-1.24,-0.15) -- (-1.24,0.15);
\draw (1.1,-0.35) node [right] {$- \gamma$};
\draw (-1.15,-0.3) node [left] {$\gamma$};
\draw[thick, dashed] (0,0) to (-2,3.5);
\draw[thick, dashed] (0,0) to (-2,-3.5);
\draw[thick, blue] (-1.24,0) to (1.24,0);
\draw[thick, blue] (-0.8,0.6) node {$\hat{\mathcal{C_\varepsilon}}$};
\end{tikzpicture} \caption{\label{fig:C_1_C_hat_1} The contour $\hat{\mathcal{C}_{\epsilon}}$
for $\frac{k}{n}<\alpha_{0}$ and the asymptotically equivalent contour
$C_{1}$. }
\end{minipage}%
\begin{minipage}[c]{0.5\textwidth}%
\begin{tikzpicture}[scale=0.8]
\tikzstyle arrowstyle=[scale=1.5]
\draw (-4,0) -- (4,0);
\draw (0,-4) -- (0,4);
\draw[thick,postaction={decorate,decoration={markings,mark=at position .95 with {\arrow[arrowstyle]{stealth}}}}] (-0.88,-3) to[bend right] (-0.88,3);
\draw (-0.9,1.5) node {$C_1$};
\draw (-0.1,-0.4) -- (0.1,-0.4);
\draw (-0.1,0.4) -- (0.1,0.4);
\draw (0,-0.4) node [right] {$\gamma$};
\draw (0,0.4) node [right] {$-\gamma$};

\draw[thick, blue] (0,-0.5) to (0,0.5);
\draw[thick, blue] (-0.8,0.6) node {$\hat{\mathcal{C_\varepsilon}}$};
\end{tikzpicture} \caption{\label{fig:C_1_C_hat_2} The contour $\hat{\mathcal{C}_{\epsilon}}$
for $\frac{k}{n}>\alpha_{0}$ and the asymptotically equivalent contour
$C_{1}$. }
\end{minipage}
\end{figure}

When we replace $\hat{\cC}_{\epsilon}$ by $C_{1}$ in \eqref{F}, the introduced
error is negligible, since the integral of $(A_{0}+A_{1}t)\exp\left(n\left(-\frac{s^{3}}{3}+\gamma^{2}s\right)\right)$
over $C_{1}\setminus\hat{D}(1,\epsilon)$, is asymptotically smaller
than the integral over $\hat{\cC}_{\epsilon}$, see \cite[p. 372]{BlHa}
for details. The Airy function can be represented as an integral over
$C_{1}$. By a change of variable $\tau\mapsto i\tau$ and a deformation
of the contour of integration one obtains 
\begin{align*}
Ai(x) & =\frac{1}{2\pi}\int_{-\infty}^{+\infty}\cos\left(\frac{\tau^{3}}{3}+\tau x\right)\d\tau=\frac{1}{2i\pi}\int_{C_{1}}\exp\left(-\frac{u^{3}}{3}+ux\right)\d u
\end{align*}
and therefore

\begin{equation}
\frac{1}{2i\pi}\int_{{\cC_{\epsilon}}}\exp\left(n\Phi_{a}(z)\right)\frac{\d z}{z}\sim(-1)^{n-k}\left(\frac{A_{0}}{n^{1/3}}Ai(n^{2/3}\gamma^{2})+\frac{A_{1}}{n^{2/3}}Ai'(n^{2/3}\gamma^{2})\right),\qquad n\rightarrow\infty,\label{eq:Airy_expansion}
\end{equation}
where $A_{0},\,A_{1}$ are defined in \eqref{eq:A0A1}. To compute
$A_{0},\,A_{1}$ we write 
\[
G_{0}(\pm\gamma)=G_{0}(s_{\pm})=\frac{1}{z_{\pm}}z'(s_{\pm}).
\]
A computation (see the proof of \cite[Proposition 17]{SzZa4} for
more details) shows that: 
\[
z'(t_{\pm})=z_{\pm}\sqrt{\frac{2\abs{\gamma}}{a}}\frac{1}{\abs{\Delta}^{1/4}},\quad\textnormal{where}\ a=k/n,\quad\textnormal{and}\ \Delta=\left(a-\alpha_{0}\right)\left(\alpha_{0}^{-1}-a\right).
\]
Therefore 
\[
G_{0}(\gamma)=G_{0}(-\gamma)=\sqrt{\frac{2\abs{\gamma}}{a}}\frac{1}{\abs{\Delta}^{1/4}}
\]
and 
\[
A_{0}=\frac{G_{0}(\gamma)+G_{0}(-\gamma)}{2}=G_{0}(\gamma)=\sqrt{\frac{2\abs{\gamma}}{a}}\frac{1}{\abs{\Delta}^{1/4}},
\]
\[
A_{1}=\frac{G_{0}(\gamma)-G_{0}(-\gamma)}{2\gamma}=0.
\]
\end{proof}

\subsection{Proofs of Theorem \ref{Th_Regions_I_VII}~(3),~(4) and of
Theorem \ref{Th:Regions_IV_V_VI}(2)}

\subsubsection{The case where $a=k/n$ is close to the boundaries $\alpha_{0},\alpha_{0}^{-1}$.}

We first discuss the situation where $a=k/n$ approaches the boundaries
$\alpha_{0},\alpha_{0}^{-1}$ and start by proving Theorem \ref{Th_Regions_I_VII}~(3),~(4). Here we apply Proposition
\ref{Prop_Airy_fcts} together with \eqref{eq:exp_beh_Airy}. 

\begin{proof}[Proof of Theorem \ref{Th_Regions_I_VII}~{\rm (3),~(4)}.]
First we prove part (3). If $k\in[\alpha n,\alpha_{0}n-n^{1/3})$
and if, in addition, $n^{2/3}(\alpha_{0}-k/n)\rightarrow+\infty$
(Region III) then $n^{2/3}\gamma^{2}\rightarrow+\infty$ as $n$ tends
to $\infty$. Since $Ai(x)\sim\frac{1}{2x^{1/4}\sqrt{\pi}}\exp\left(-\frac{2}{3}x^{3/2}\right)$
as $x\rightarrow+\infty$, we have 
\begin{align*}
\sqrt{\frac{2\abs{\gamma}}{k/n}}\frac{(-1)^{n-k}}{\abs{\Delta}^{1/4}}Ai(n^{2/3}\gamma^{2}) & \sim\sqrt{\frac{2\abs{\gamma}}{k/n}}\frac{(-1)^{n-k}}{\abs{\Delta}^{1/4}}\frac{1}{2\sqrt{\pi}n^{1/6}\abs{\gamma}^{1/2}}\exp\left(-\frac{2}{3}n\abs{\gamma}^{3}\right)\\
 & \sim\frac{1}{\sqrt{2\pi}}\frac{(-1)^{n-k}}{\sqrt{k/n}\left[(\alpha_{0}-k/n)(\alpha_{0}^{-1}-k/n)\right]^{1/4}}\frac{\exp\left(-\frac{2}{3}n\abs{\gamma}^{3}\right)}{n^{1/6}}.
\end{align*}
It remains to use \eqref{eq:Airy_expansion} and to divide both parts 
by $n^{1/3}.$ We omit the proof of part (4) which is almost identical. 
\end{proof}
Next we apply Proposition \ref{Prop_Airy_fcts} together with \eqref{eq:osc_beh_Airy}
to prove Theorem \ref{Th:Regions_IV_V_VI}~(2) for $k\leq\beta n$
or $k\geq\beta^{-1}n$. 

\begin{proof}[Proof of Theorem \ref{Th:Regions_IV_V_VI}~{\rm (2)} for $k\leq\beta n$
or $k\geq\beta^{-1}n$.]
Let $k\in(\alpha_{0}n+n^{1/3},\beta n]\cup[\beta^{-1}n,\alpha_{0}^{-1}n-n^{1/3})$.
We assume in addition that either $n^{2/3}(k/n-\alpha_{0})\rightarrow+\infty$
or $n^{2/3}(\alpha_{0}^{-1}-k/n)\rightarrow+\infty$ (i.e. $k$ lies
in Region IV\textbackslash$(\beta n,\beta^{-1}n)${} ):

i) If $n^{2/3}(k/n-\alpha_{0})\rightarrow+\infty$ then $\gamma^{2}=\gamma_{\alpha_{0}}^{2}$
and $n^{2/3}\gamma^{2}\rightarrow-\infty$. Recalling that $Ai(-x)\sim\frac{1}{x^{1/4}\sqrt{\pi}}\cos\left(\frac{2}{3}x^{3/2}-\frac{\pi}{4}\right)$
as $x\rightarrow+\infty$ we obtain 
\[
\sqrt{\frac{2\abs{\gamma}}{k/n}}\frac{(-1)^{n-k}}{\abs{\Delta}^{1/4}}Ai(n^{2/3}\gamma^{2})\sim\sqrt{\frac{2\abs{\gamma}}{k/n}}\frac{(-1)^{n-k}}{\abs{\Delta}^{1/4}}\frac{1}{\sqrt{\pi}n^{1/6}\abs{\gamma}^{1/2}}\cos\left(\frac{2}{3}n\abs{\gamma}^{3}-\frac{\pi}{4}\right),
\]
and therefore 
\begin{multline*}
\sqrt{\frac{2\abs{\gamma}}{k/n}}\frac{(-1)^{n-k}}{\abs{\Delta}^{1/4}}Ai(n^{2/3}\gamma^{2})\\
\sim\sqrt{\frac{2}{\pi}}\frac{(-1)^{n-k}}{\sqrt{k/n}\left[(\alpha_{0}-k/n)(\alpha_{0}^{-1}-k/n)\right]^{1/4}}\frac{\cos\left(n\abs{ih(\varphi_{+})-i\pi(1-k/n)}-\frac{\pi}{4}\right)}{n^{1/6}},
\end{multline*}
where we use the definitions of $\gamma^{3}$ (see \eqref{eq:exact_g_3_left_bd})
and $h$ (see \eqref{eq:h}). Using the fact that $\gamma^{3}\in i\mathbb{R}_{+}$
we obtain 
\begin{align*}
\cos\left(n\abs{ih(\varphi_{+})-i\pi(1-k/n)}-\frac{\pi}{4}\right) & =\cos\left(nh(\varphi_{+})-\pi(n-k)-\frac{\pi}{4}\right)\\
 & =(-1)^{n-k}\cos\left(nh(\varphi_{+})-\frac{\pi}{4}\right).
\end{align*}
It remains to use Proposition \ref{Prop_Airy_fcts} and to divide
by $n^{1/3}$.

ii) If $n^{2/3}(\alpha_{0}^{-1}-k/n)\rightarrow+\infty$, then our
argument is similar. We use the second formula in Proposition \ref{Prop_Airy_fcts}
and the fact that this time $\gamma^{3}=\gamma_{\alpha_{0}^{-1}}^{3}=-\frac{3}{2}ih(\varphi_{+})$,
see \eqref{eq:exact_g_3_right_bd}. 
\end{proof}

\subsubsection{\label{subsec:Fedoryuk}The case where $a=k/n$ is separate from
the boundaries $\alpha_{0},\alpha_{0}^{-1}$.}

Lemma \ref{lem:critical_pts_f} shows that the location of stationary
points of $\Phi_{a}$ in $\mathbb{C}$ is determined by the location
of $a$ relative to the critical interval $[\alpha_{0},\alpha_{0}^{-1}]$.
The situation where $a$ approaches the boundaries $\alpha_{0},\alpha_{0}^{-1}$
was discussed in the previous subsection. In this case, the stationary
points $z_{\pm}$ degenerate and uniform methods are required. The
situation where $a$ is separate from $\alpha_{0},\alpha_{0}^{-1}$,
that is there exists $\beta\in(\alpha_{0},1)$ that separates $a$
from the boundary, $a\in[\beta,\beta^{-1}]$, is different and even
simpler. In this case the stationary points $z_{\pm}=e^{i\varphi_{\pm}}$
of $\Phi_{a}$ belong to the contour of integration $\partial\mathbb{D}$
and remain separate from $\pm1$, see below. Since $\Abs{z^{-k/n}\frac{z-\lambda}{1-\lambda z}}=1$
for any $z\in\partial\mathbb{D}$ we can introduce the real function
\[
\tilde{h}(\varphi)=\widetilde{h_{a}}(\varphi)=-h_{a}(\varphi)=i\Phi_{a}(e^{i\varphi}),\qquad\varphi\in[0,\pi],
\]
to write the integral as a generalized Fourier integral (the Fourier/Taylor
coefficients of $b_{\lambda}^{n}$ are real because $\lambda\in(0,1)$),
\begin{align*}
\overline{\widehat{b_{\lambda}^{n}}(k)}={\frac{1}{2\pi}\int_{-\pi}^{\pi}e^{-n\Phi_{a}(e^{i\varphi})}\d\varphi=\frac{1}{\pi}\Re{\left\{ \int_{0}^{\pi}e^{in\widetilde{h_{a}}(\varphi)}\d\varphi\right\} .}}
\end{align*}
The asymptotic behavior of this integral can be determined using A.
Erdélyi's standard \textit{method of stationary phase}~\cite{Erd}
and the approach from \cite[Section 3.1]{SzZa3}, which will be done
at the end of this section. Before that let us mention that a more
elaborate version of the classical method of stationary phase, due
to M.V.~Fedoryuk \cite[Theorem 2.4 p.~80]{Fed2} (see \cite[Theorem 1.6 p.107]{Fed1}
for a simple version in one dimension), will make our proof much shorter.
Moreover, Fedoryuk's method immediately provides us with a sharp error
term for the first order approximation of $\widehat{b_{\lambda}^{n}}(k)$,
which holds uniformly for $k\in[\beta n,\beta^{-1}n]$. We first provide
this simple proof making use of Fedoryuk's result, and then write
in full detail a classical (but longer and more technical) proof of
the same formula, using A.Erdélyi's standard method of stationary
phase. 
\begin{proof}[Proof of Theorem \ref{Th:Regions_IV_V_VI}~{\rm (2)} for $\beta n\leq k\leq\beta^{-1}n$
using Fedoryuk's method. ]
Suppose that $a=k/n\in[\beta,\beta^{-1}]$. The stationary points
of $\tilde{h}=\widetilde{h_{a}}$ are given by 
\[
z_{\pm}=\frac{a(1+\lambda^{2})-(1-\lambda^{2})}{2\lambda a}\pm i\sqrt{1-\left(\frac{a(1+\lambda^{2})-(1-\lambda^{2})}{2\lambda a}\right)^{2}}\in\partial\mathbb{D}
\]
and we write $z_{\pm}=e^{i\varphi_{\pm}}$ with $\varphi_{+}\in[0,\pi]$
and $\varphi_{-}\in(-\pi,0]$. Only $z_{+}$ is relevant since we
integrate over $[0,\pi]$ and the unique critical point $\varphi_{+}=\varphi_{+}(k/n)$
of $\widetilde{h_{a}}$ in $(0,\pi)$ satisfies $x\leq\varphi_{+}\leq\pi-x$
for some $x=x(\beta,\lambda)>0$ because 
\[
\Abs{e^{i\varphi_{+}}-1}\geq(1-\lambda)\sqrt{\frac{\beta}{\lambda}}\sqrt{\alpha_{0}^{-1}-\beta^{-1}},\qquad\Abs{e^{i\varphi_{+}}+1}\geq(1+\lambda)\sqrt{\frac{\beta}{\lambda}}\sqrt{\beta-\alpha_{0}}.
\]
These inequalities follow from the identities 
\[
\Abs{e^{i\varphi_{+}}-1}^{2}=\frac{(1-\lambda)^{2}\left(\alpha_{0}^{-1}-a\right)}{a\lambda},\qquad\Abs{e^{i\varphi_{+}}+1}^{2}=\frac{(1+\lambda)^{2}\left(a-\alpha_{0}\right)}{a\lambda}.
\]
For the second derivative we have 
\[
-i\tilde{h}''(\varphi)=\frac{\partial}{\partial\varphi}\left(\frac{\partial\Phi}{\partial z}\frac{dz}{d\varphi}\right)=\frac{\partial^{2}\Phi}{\partial z^{2}}\left(\frac{dz}{d\varphi}\right)^{2}+\frac{\partial\Phi}{\partial z}\frac{d^{2}z}{(d\varphi)^{2}}.
\]
It follows from \eqref{eq:sec_der_Phi_z_p} that 
\begin{eqnarray*}
i\frac{\partial^{2}\tilde{h}}{\partial\varphi^{2}}\Bigg|_{\varphi=\varphi_{+}} & = & z_{+}^{2}\Phi''(z_{+}),
\end{eqnarray*}
which gives, by \eqref{eq:sec_der_Phi_z_p_2}, that 
\[
\tilde{h}''(\varphi_{+})=\frac{k}{n}\sqrt{\left(\frac{k}{n}-\alpha_{0}\right)\left(\alpha_{0}^{-1}-\frac{k}{n}\right)}\geq\min_{a\in[\beta,\beta^{-1}]}a\sqrt{(a-\alpha_{0})(\alpha_{0}^{-1}-a)}=:C(\beta,\lambda)>0.
\]
We are now ready to apply Fedoryuk's result \cite[Theorem 2.4 p.~80]{Fed2}
with $d=1$ and $\Omega=(0,\pi)$ to 
\[
I(n,a)=\int_{0}^{\pi}\nu(\varphi)e^{in\tilde{h}(\varphi)}\d\varphi,
\]
where $\tilde{h}=\widetilde{h_{a}},$ $a=k/n\in[\beta,\beta^{-1}]=:M$,
$x=x(\beta,\lambda)>0$ and $\nu:\left[0,\pi\right]\longrightarrow\mathbb{R}$
is a \textit{neutralizer} satisfying ${\nu=1}$ on $\left[x/2,\pi-x/2\right],$
${\nu=0}$ on $\left[0,x/4\right]\cup\left[\pi-x/4,\pi\right]$ and
$0\leqslant\nu\leqslant1$. The compact $K:=\left[\frac{x}{4},\pi-\frac{x}{4}\right]$
satisfies Assumption 2 in \cite{Fed2}. The function $\nu:\varphi\mapsto\nu(\varphi)$
does not depend neither on $a$ nor on $\xi=n$ and Assumption 3 in
\cite{Fed2} is satisfied with $m=0.$ Finally for $a=\frac{k}{n}\in M$
the unique critical point $\varphi_{+}=\varphi_{+}(a)$ of $\varphi\mapsto\widetilde{h_{a}}(\varphi)$
satisfies 
\[
\tilde{h}''(\varphi_{+})\geq C(\beta,\lambda)>0
\]
and Assumptions 4 and 5 in \cite{Fed2} are also satisfied. Applying
Fedoryuk's asymptotic formula with $l=1$, $\alpha_{1}=\frac{3}{2}$,
$b_{1}=\sqrt{2\pi}\left(\tilde{h}''(\varphi_{+})\right){}^{-1/2}\exp\left(\frac{i\pi}{4}\right)$,
we obtain that 
\[
I(n,a)=\sqrt{2\pi}\left(\tilde{h}''(\varphi_{+})\right){}^{-1/2}\exp\left(\frac{i\pi}{4}\right)n^{-1/2}e^{in\tilde{h}(\varphi_{+})}+\cO\left(n^{-3/2}\right)
\]
where $\cO\left({n^{-3/2}}\right)$ is uniform over $k/n\in[\beta,\beta^{-1}]$.
It remains to observe that $\int_{0}^{\pi}e^{in\widetilde{h_{a}}(\varphi)}\d\varphi-I(n,a)=\cO(n^{-2})$
uniformly for $k/n\in[\beta,\beta^{-1}]$ to conclude that 
\begin{equation}
\widehat{b_{\lambda}^{n}}(k)=\sqrt{\frac{2}{n\pi}}\frac{\cos\left(n\tilde{h}(\varphi_{+})+\pi/4\right)}{\sqrt{k/n}\left[(\alpha_{0}^{-1}-k/n)(k/n-\alpha_{0})\right]^{1/4}}+\cO\left(n^{-3/2}\right)\label{eq:asym_exp_centr_reg},
\end{equation}
where $\cO\left({n^{-3/2}}\right)$ is uniform over $k/n\in[\beta,\beta^{-1}]$. 
\end{proof}
For the sake of completeness we end this section by proving the above
asymptotic expansion \eqref{eq:asym_exp_centr_reg} using the standard
\textit{method of stationary phase}~\cite{Erd}. 
\begin{proof}[Proof of Theorem \ref{Th:Regions_IV_V_VI}~{\rm (2)} for $\beta n\leq k\leq\beta^{-1}n$
using Erdélyi's method.]

To determine the asymptotic behavior we apply a standard result of
A.~Erdélyi~\cite[Theorem 4]{Erd} (see also \cite[Theorem 1.3]{AlDe1}
for a detailed discussion of this result and the involved error estimates),
which however requires that the stationary point is an endpoint of
the interval of integration. Hence we begin by splitting our generalized
Fourier integral: 
\[
\int_{0}^{\pi}e^{in\tilde{h}(\varphi)}\d\varphi=\int_{0}^{\varphi_{+}}e^{in\tilde{h}(\varphi)}\d\varphi+\int_{\varphi_{+}}^{\pi}e^{in\tilde{h}(\varphi)}\d\varphi.
\]
For the second integral, Theorem 4 of \cite{Erd} yields 
\begin{align*}
\int_{\varphi_{+}}^{\pi}e^{in\tilde{h}(\varphi)}\d\varphi & =\frac{1}{2}\Gamma(1/2)\kappa_{1}(0)e^{i\frac{\pi}{4}}n^{-1/2}e^{in\tilde{h}(\varphi_{+})}+\frac{1}{2}\Gamma(1)\kappa_{1}'(0)e^{i\frac{\pi}{2}}n^{-1}e^{in\tilde{h}(\varphi_{+})}\\
 & +\frac{1}{2}\Gamma(3/2)\kappa_{1}''(0)e^{i\frac{3\pi}{4}}n^{-3/2}e^{in\tilde{h}(\varphi_{+})}+e^{inh(\pi)}\frac{i}{n}\frac{1}{\tilde{h}'(\pi)}\\
 & +R_{3}^{(1)}(n)+R_{3}^{(2)}(n),
\end{align*}
where 
\[
\kappa_{1}(0)=2^{1/2}\left(\tilde{h}''(\varphi_{+})\right)^{-1/2},
\]
\[
\kappa_{1}'(0)=-\frac{2}{h''(\varphi_{+})}\frac{\tilde{h}^{(3)}(\varphi_{+})}{3\tilde{h}''(\varphi_{+})},
\]
\[
\kappa_{1}''(0)=\frac{2^{5/2}}{\left(\tilde{h}''(\varphi_{+})\right)^{3/2}}\left(\frac{5}{36}\left(\tilde{h}^{(3)}(\varphi_{+})\right)^{2}-\frac{\tilde{h}''(\varphi_{+})\tilde{h}^{(4)}(\varphi_{+})}{12}\right)\frac{3}{4\tilde{h}''(\varphi_{+})^{2}},
\]
the error terms $R_{3}^{(1)}(n),\,R_{3}^{(2)}(n)$ will be explicitly
estimated from above in what follows, according to \cite[Theorem 1.3]{AlDe1},
and the function $\kappa_{1}$ will be explicitly defined later on.
First of all we observe that for $k\in\left[\beta n,\,\beta^{-1}n\right]$
we have 
\[
\tilde{h}''(\varphi_{+})\geq C(\beta,\lambda)>0
\]
which shows in particular that 
\[
\frac{1}{2}\Gamma(3/2)\kappa_{1}''(0)e^{i\frac{3\pi}{4}}n^{-3/2}e^{in\tilde{h}(\varphi_{+})}=\cO\left({n^{-3/2}}\right)
\]
and that the $\cO\left({n^{-3/2}}\right)$-term is uniform over $a=\frac{k}{n}\in[\beta,\beta^{-1}]$.
Observing that $\tilde{h}'(\pi)=\frac{k}{n}-\alpha_{0}$ we find that
the third term in the expansion of $\int_{\varphi_{+}}^{\pi}e^{in\tilde{h}(\varphi)}\d\varphi$
is purely imaginary. The second one will cancel out when we will add
to $\int_{\varphi_{+}}^{\pi}e^{in\tilde{h}(\varphi)}\d\varphi$, the
integral $\int_{0}^{\varphi_{+}}e^{in\tilde{h}(\varphi)}\d\varphi$
whose asymptotic expansion is computed below, see \eqref{eq:exp_1st_int_erd}.
Now, we show that the error terms $R_{3}^{(1)}(n),\,R_{3}^{(2)}(n)$
both satisfy 
\[
R_{3}^{(j)}(n)=\cO\left({n^{-3/2}}\right),\qquad j=1,2
\]
again uniformly for $a=\frac{k}{n}\in[\beta,\beta^{-1}]$. To this
aim we first recall that for $k\in\left[\beta n,\,\beta^{-1}n\right]$
the unique critical point $\varphi_{+}$ of $\tilde{h}$ satisfies
$x\leq\varphi_{+}\leq\pi-x$ for some $x=x(\beta,\lambda)>0.$ We
use the notation from \cite[Section 1]{AlDe1} and choose $\eta=\frac{x}{4}\in\left(0,\frac{\pi-\varphi_{+}}{2}\right).$
For $j=1,2,$ let $\psi_{j}=I_{j}\to\mathbb{R}$ be the
functions defined by 
\[
\psi_{1}(\varphi)=\left(\tilde{h}(\varphi)-\tilde{h}\left(\varphi_{+}\right)\right)^{\frac{1}{2}},\quad\psi_{2}(\varphi)=\tilde{h}\left(\pi\right)-\tilde{h}(\varphi)
\]
$\text{with }I_{1}:=\left[\varphi_{+},\pi-\eta\right],I_{2}=\left[\varphi_{+}+\eta,\pi\right]\text{ and }s_{1}:=\psi_{1}\left(\pi-\eta\right),s_{2}:=\psi_{2}\left(\varphi_{+}+\eta\right)$.
$\psi_{j}$ is shown to be a diffeomorphism between $I_{j}$ and $[0,s_{j}]$,
see \cite[Proposition 3.2]{AlDe1}. For $j=1,2,$ let $\kappa_{j}:\left(0,s_{j}\right]\rightarrow\mathbb{C}$
be the functions defined by

\[
\kappa_{j}(s):=\left(\psi_{j}^{-1}\right)^{\prime}(s).
\]
It is shown in \cite[Proposition 3.3]{AlDe1} that $\kappa_{j}$ can
be continuously extended to $[0,s_{j}]$ and that $\kappa_{j}\in\mathcal{C}^{3}([0,s_{j}])$.
Let $\nu:\left[\varphi_{+},\pi\right]\longrightarrow\mathbb{R}$ be
a neutralizer such that ${\nu=1}$ on $\left[\varphi_{+},\varphi_{+}+\eta\right]$,
${\nu=0}$ on $\left[\pi-\eta,\pi\right]$ and $0\leqslant\nu\leqslant1$,
where $\eta$ is defined above. For $j=1,2,$ let $\nu_{j}=\left[0,s_{j}\right]\rightarrow\mathbb{R}$
be the functions defined by

\[
\nu_{1}(s)=\nu\circ\psi_{1}^{-1}(s)\quad,\quad v_{2}(s)=(1-\nu)\circ\psi_{2}^{-1}(s).
\]
It is shown in \cite[Theorem 1.3]{AlDe1} that 
\[
\left|R_{3}^{(j)}(n)\right|\leqslant\frac{1}{4}\Gamma\left(\frac{3}{2}\right)n^{-\frac{3}{2}}\int_{0}^{s_{j}}\left|\frac{d^{3}}{ds^{3}}\left[\nu_{j}\kappa_{j}\right](s)\right|{\rm d}s,\qquad j=1,2.
\]
To prove that $R_{3}^{(j)}(n)=\cO\left({n^{-3/2}}\right)$ uniformly
for $\frac{k}{n}\in[\beta,\beta^{-1}]$ we write 
\[
\int_{0}^{s_{j}}\left|\frac{d^{3}}{ds^{3}}\left[\nu_{j}\kappa_{j}\right](s)\right|{\rm d}s\leq s_{j}\max_{s\in[0,s_{j}]}\left|\frac{d^{3}}{ds^{3}}\left[\nu_{j}\kappa_{j}\right](s)\right|,\qquad j=1,2.
\]

\textit{First, we treat in details the case $j=2$.} We need to show 
that 
\begin{equation}
\max_{s\in[0,s_{2}]}\left|\kappa_{2}^{(l)}(s)\right|=\cO\left(1\right),\qquad l=0,\dots,3,\label{eq:AS2}
\end{equation} 
uniformly for $\frac{k}{n}\in[\beta,\beta^{-1}]$. We have 
\[
\kappa_{2}(s)=-\frac{1}{\tilde{h}'(\psi_{2}^{-1}(s))}.
\]
Computing the derivatives of $\kappa_{2}$ and taking into account
that $\tilde{h}$ and each of its derivatives are uniformly bounded
on $[0,\,\pi]\supset[\varphi_{+}+\eta,\,\pi]$, we observe that the
proof of \eqref{eq:AS2} follows from the fact that $\min_{s\in[0,s_{2}]}\abs{\tilde{h}'(\psi_{2}^{-1}(s))}$
is uniformly separated from $0$. More precisely, for $s\in[0,s_{2}]$,
$\psi_{2}^{-1}(s)\in[\varphi_{+}+\eta,\,\pi]$ and for $\varphi\in(0,\pi)$,
we have 
\[
\tilde{h}''(\varphi)=\frac{2\lambda(1-\lambda^{2})\sin(\varphi)}{(1+\lambda^{2}-2\lambda\cos(\varphi))^{2}}>0
\]
which implies that $\tilde{h}'$ is increasing on $[\varphi_{+}+\eta,\,\pi]$.
We know that $h'(\varphi_{+})=0$, $h'$ does not vanish on $(\varphi_{+},\pi]$
and $\tilde{h}'(\pi)=\frac{k}{n}-\alpha_{0}>0$. Therefore, $\min_{s\in[0,s_{2}]}\abs{\tilde{h}'(\psi_{2}^{-1}(s))}=\tilde{h}'(\varphi_{+}+\eta)$.
By the mean-value theorem there is $\theta\in(\varphi_{+},\,\varphi_{+}+\eta)$
such that 
\begin{align*}
\tilde{h}'(\varphi_{+}+\eta) & =\tilde{h}'(\varphi_{+}+\eta)-\tilde{h}'(\varphi_{+})\\
 & =\eta\tilde{h}''(\theta)\geq\eta\frac{2\lambda(1-\lambda^{2})\sin\theta}{(1+\lambda^{2}-2\lambda\cos\theta)^{2}}\\
 & \ge\frac{x\lambda(1+\lambda)}{2(1-\lambda)^{3}}\min_{t\in[x,\pi-3x/4]}\sin(t)
\end{align*}
because $x\leq\varphi_{+}\leq\theta\leq\varphi_{+}+\eta\leq\pi-\frac{3x}{4}$.
The same type of argument yields 
\[
\max_{s\in[0,s_{2}]}\left|\nu_{2}^{(l)}(s)\right|=\cO\left(1\right),\qquad0\leq l\leq3,
\]
uniformly for $\frac{k}{n}\in[\beta,\beta^{-1}]$. Indeed, a direct
computation shows that $\nu_{2}'(s)=-\frac{(1-\nu)'(\psi_{2}^{-1}(s))}{\tilde{h}'(\psi_{2}^{-1}(s))}$
and $\nu_{2}'$ is of the same nature as $\kappa_{2}$. We conclude
that $R_{3}^{(2)}(n)=\cO\left({n^{-3/2}}\right)$ uniformly for $\frac{k}{n}\in[\beta,\beta^{-1}]$.

\textit{Now, we deal with the case $j=1.$ }We apply the same type
of reasoning to show that $R_{3}^{(1)}(n)=\cO\left({n^{-3/2}}\right)$
uniformly for $\frac{k}{n}\in[\beta,\beta^{-1}]$. First of all, since
$\nu_{1}(s)=1$ in some neighborhood of $s=0$ and $\nu_{1}'(s)=-\frac{\nu'(\psi_{1}^{-1}(s))}{\tilde{h}'(\psi_{1}^{-1}(s))}$,
we have 
\[
\max_{s\in[0,s_{1}]}\left|\nu_{1}^{(l)}(s)\right|=\cO\left(1\right),\qquad 0\le l\le 3,
\]
uniformly for $\frac{k}{n}\in[\beta,\beta^{-1}]$. Indeed, if $s$
is separated from 0, a direct computation shows that $\nu_{1}^{(l)}$
are expressed as quotients whose numerators are uniformly bounded
from above and whose denominators are powers of $\tilde{h}'(\psi_{1}^{-1})$
which are therefore uniformly separated from 0 (this can be seen, for
example, by an application of the mean-value theorem as above). For
$s\in[0,s_{1}]$, we have $\psi_{1}^{-1}(s)\in[\varphi_{+},\pi-\eta]$
and 
\[
\kappa_{1}(s)=\frac{2s}{\tilde{h}'(\psi_{1}^{-1}(s))}.
\]
To show that
\begin{equation}
\max_{s\in[0,s_{1}]}\left|\kappa_{1}^{(l)}(s)\right|=\cO\left(1\right),\qquad0\leq l\leq3,\label{eq:AS1}
\end{equation}
uniformly for $\frac{k}{n}\in[\beta,\beta^{-1}]$, we begin with a
series of preliminary observations. First, for $0\leq l\leq3$, the
functions $\kappa_{1}^{(l)}$ are continuous on the compact
$[0,s_{1}]$, see \cite[Proposition 3.3]{AlDe1}. Therefore, the function
$s\mapsto\abs{\kappa_{1}^{(l)}(s)}$ attains its maximum on this interval.
Second, we recall that the three explicit formulas we have previously
written for $\kappa_{1}^{(l)}(0)$, $0\leq l\leq2,$ show that these
quantities are expressed as quotients whose numerators are uniformly
bounded (because $\tilde{h}$ and its derivatives are bounded) and
whose denominators are expressed as powers of $\tilde{h}'(\varphi_{+})\geq C(\beta,\lambda)>0$.
Therefore for $0\leq l\leq3$, we have $\abs{\kappa_{1}^{(l)}(0)}=\cO\left({1}\right)$
uniformly for $\frac{k}{n}\in[\beta,\beta^{-1}].$ Third, if $s$
is separated from 0, a direct computation shows again that $\kappa_{1}^{(l)}$
are expressed as quotients whose numerators are uniformly bounded
from above and whose denominators are powers of $h'(\psi_{1}^{-1})$
which are therefore uniformly separated from 0. We use these observations
to prove that for any $0\leq l\leq3,$ \eqref{eq:AS1} holds uniformly
for $\frac{k}{n}\in[\beta,\beta^{-1}]$. We only provide a proof of
\eqref{eq:AS1} for the case $l=0$, the other cases $1\leq l\leq3,$
can be proved similarly. Let $t=t(n)\in[0,s_{1}]$ be such that 
\[
\max_{s\in[0,s_{1}]}\left|\kappa_{1}(s)\right|=\abs{\kappa_{1}(t(n))}=\abs{\kappa_{1}(\psi_{1}(\varphi(n)))},
\]
where $\varphi(n)\in[x,\pi-\eta]$. If $\abs{\kappa_{1}(t(n))}$ is
not uniformly bounded for $\frac{k}{n}=\frac{k(n)}{n}\in[\beta,\beta^{-1}]$
as $n$ tends to $\infty$, then $\abs{\kappa_{1}(t(n_{l}))}\rightarrow_{l}\infty$
for some subsequence $(n_{l})_{l}$ and $\frac{k(n_{l})}{n_{l}}\in[\beta,\beta^{-1}].$
A direct computation shows that 
\[
\tilde{h}'(\psi_{1}^{-1}(t(n_{l})))=\frac{k(n_{l})}{n_{l}}-\frac{1-\lambda^{2}}{1+\lambda^{2}-2\lambda\cos\left(\varphi(n_{l})\right)}.
\]
By compactness, we can construct a new subsequence $(n_{q})$ (actually
extracted from $(n_{l})$) such that both $\frac{k(n_{q})}{n_{q}}$
converges to some $\tilde{\beta}\in[\beta,\beta^{-1}]$ and $\varphi(n_{q})$
converges to some $\tilde{\varphi}\in[x,\pi-\eta]$. Passing to the
limit as $q$ tends to $\infty$ we find that 
\[
\lim_{q}\tilde{h}'(\psi_{1}^{-1}(t(n_{q})))=\tilde{h}_{\tilde{\beta}}'(\tilde{\varphi})=\tilde{\beta}-\frac{1-\lambda^{2}}{1+\lambda^{2}-2\lambda\cos\left(\tilde{\varphi}\right)}.
\]
Therefore, 
\[
\lim_{q}\kappa_{1}(t(n_{q}))=\tilde{\kappa_{1}}(\tilde{\psi_{1}}(\tilde{\varphi})),
\]
where $\tilde{\psi_{1}}(\varphi)=\sqrt{\tilde{h}_{\tilde{\beta}}(\varphi)-\tilde{h}_{\tilde{\beta}}(\varphi_{+})}$, 
$\tilde{h}'(\varphi_{+})=0$ and $\tilde{\kappa_{1}}(s)=\frac{2s}{\tilde{h}_{\tilde{\beta}}'(\tilde{\psi_{1}}^{-1}(s))}$. 
This contradicts the assumption $\lim_{q\rightarrow\infty}\abs{\kappa_{1}(t(n_{q}))}=\infty$.

The analysis of the first integral $\int_{0}^{\varphi_{+}}e^{in\tilde{h}(\varphi)}\d\varphi$
is essentially the same but we change the variable of integration
$\varphi\mapsto-\varphi$ as suggested in \cite[p.~23]{Erd}. We get
\[
\int_{0}^{\varphi_{+}}e^{in\tilde{h}(\varphi)}\d\varphi=\int_{-\varphi_{+}}^{0}e^{in\tilde{h}(-\varphi)}\d\varphi.
\]
Applying Theorem 4 of \cite{Erd} (together with \cite[Theorem 1.3]{AlDe1}
to estimate the $\cO-$term), we obtain that 
\begin{align}
\int_{-\varphi_{+}}^{0}e^{in\tilde{h}(-\varphi)}\d\varphi & =\frac{1}{2}\Gamma(1/2)\kappa_{3}(0)e^{i\frac{\pi}{4}}n^{-1/2}e^{in\tilde{h}(\varphi_{+})}+\frac{1}{2}\Gamma(1)\kappa_{3}(0)e^{i\frac{\pi}{2}}n^{-1}e^{in\tilde{h}(\varphi_{+})}\label{eq:exp_1st_int_erd}\\
 & -\frac{i}{n}e^{in\tilde{h}(0)}\frac{1}{\tilde{h}'(0)}+\cO\left({n^{-3/2}}\right)\nonumber 
\end{align}
with 
\[
\kappa_{3}(0)=2^{1/2}\left(\tilde{h}''(\varphi_{+})\right)^{-1/2},
\]
\[
\kappa_{3}'(0)=\frac{2}{\tilde{h}''(\varphi_{+})}\frac{\tilde{h}^{(3)}(\varphi_{+})}{3\tilde{h}''(\varphi_{+})},
\]
where, as for the above asymptotic expansion of $\int_{\varphi_{+}}^{\pi}e^{in\tilde{h}(\varphi)}\d\varphi$,
the $\cO\left({n^{-3/2}}\right)$-term is again uniform over $a=\frac{k}{n}\in[\beta,\beta^{-1}]$.
Observing that $\tilde{h}(0)=0$, $\tilde{h}(\pi)=(a-1)\pi,$ $\tilde{h}'(0)=\frac{(a-1)(1-\lambda)-2\lambda}{1-\lambda}$
and $\tilde{h}'(\pi)=-\frac{(a-1)(1+\lambda)+2\lambda}{1+\lambda}$
we compute 
\begin{align*}
\int_{0}^{\pi}e^{in\tilde{h}(\varphi)}\d\varphi & =\Gamma(1/2)\left(2^{1/2}\left(\tilde{h}''(\varphi_{+})\right)^{-1/2}\right)e^{i\frac{\pi}{4}}n^{-1/2}e^{in\tilde{h}(\varphi_{+})}+\cO\left(n^{-3/2}\right)\\
 & =\frac{\sqrt{2}\Gamma(1/2)e^{in\tilde{h}(\varphi_{+})+i\frac{\pi}{4}}}{\sqrt{k/n}\left[\left(k/n-\alpha_{0}\right)\left(\alpha_{0}^{-1}-k/n\right)\right]^{1/4}}+\cO\left(n^{-3/2}\right)\\
 & =\frac{\sqrt{2\pi}}{\sqrt{k/n}\left[\left(k/n-\alpha_{0}\right)\left(\alpha_{0}^{-1}-k/n\right)\right]^{1/4}}e^{in\tilde{h}(\varphi_{+})+i\frac{\pi}{4}}+\cO\left(n^{-3/2}\right).\\
\end{align*}
We conclude that 
\begin{align*}
 & \frac{1}{\pi}\Re\left\{ \int_{0}^{\pi}e^{in\tilde{h}(\varphi)}\d\varphi\right\} \\
 & =\sqrt{\frac{2}{\pi n}}\frac{\cos\left(n\tilde{h}(\varphi_{+})+\frac{\pi}{4}\right)}{\sqrt{k/n}\left[\left(k/n-\alpha_{0}\right)\left(\alpha_{0}^{-1}-k/n\right)\right]^{1/4}}\big(1+O(n^{-1})\big),
\end{align*}
where $\cO\left({n^{-3/2}}\right)$ is uniform over $k/n\in[\beta,\beta^{-1}]$. 
\end{proof}

\section{Strongly annular functions with small Taylor coefficients}

\label{Annular}

Let us recall that a function $f$ analytic in the unit disc is said
to be strongly annular (we use the notation $f\in\mathcal{S}\mathcal{A}$)
if 
\[
\limsup_{r\to1}\min_{\partial\mathcal{D}(0,r)}|f|=\infty.
\]
The question we are interested in here is how small and how (non)-lacunar
could be the Taylor coefficients $\widehat{f}(n)$ of $f$: 
\[
f(z)=\sum_{n\ge0}\widehat{f}(n)z^{n},\qquad z\in\mathbb{D}.
\]
In 1977, Bonar, Carroll, and Piranian \cite{BCP} produced $f\in\mathcal{S}\mathcal{A}$
such that $\widehat{f}\in c_{0}$. It is clear that if $f\in\mathcal{S}\mathcal{A}$,
then $\widehat{f}\not\in\ell^{2}$. Furthermore, the function constructed
in \cite{BCP} is far from being lacunary. Given $0<p<\infty$, set
\[
\widetilde{\ell}^{p}=\Bigl\{\{a_{n}\}_{n\ge0}:\sum_{k\ge0}\min(|a_{2k}|^{p},|a_{2k+1}|^{p})<\infty\Bigr\}.
\]
Then, the function $f$ constructed in \cite{BCP} is such that $\widehat{f}\in c_{0}\setminus\widetilde{\ell}^{2}$.

In this section we are going to get new results in this direction. 
\begin{thm}
\label{thmlp} Let $2\le p<q$. There exists $f\in\mathcal{S}\mathcal{A}$
such that $\widehat{f}\in\ell^{q}\setminus\widetilde{\ell}^{p}$. 
\end{thm}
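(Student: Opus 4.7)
The plan is to construct $f$ as a lacunary sum
\[
f(z)=\sum_{j\ge 1} c_j\, z^{M_j}\, Q_j(z),
\]
where each polynomial $Q_j$ of degree $N_j$ has three properties: \emph{(i)} its nonzero Taylor coefficients all have modulus of the same order $\asymp N_j^{-1/2}$ (coefficient flatness); \emph{(ii)} it is uniformly bounded, $\|Q_j\|_{H^\infty(\mathbb{D})}=\cO(1)$; and \emph{(iii)} there is a radius $r_j\uparrow 1$ on which $\min_{|z|=r_j}|Q_j(z)|\gtrsim 1$. The integers $M_j$ will be chosen large enough that the spectra of the shifted blocks $z^{M_j}Q_j$ are pairwise disjoint, so that $\widehat f$ decomposes into disjoint flat blocks, the $j$-th one being a $c_j$-scaled copy of the coefficient sequence of $Q_j$.

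To build $Q_j$, I would start from the Region~V truncation of $b_{1/2}^{n_j}$ with $n_j\asymp N_j$, namely the polynomial obtained by keeping only those Taylor coefficients of $b_{1/2}^{n_j}$ whose index $k$ lies in the oscillatory interval $(\alpha_{0} n_j,\alpha_{0}^{-1} n_j)$ described by Theorem~\ref{Th:Regions_IV_V_VI}\,(2). Those coefficients have the form $\asymp n_j^{-1/2}\cos(n_j\tilde h(\varphi_+)-\pi/4)$, so they oscillate and can be very small individually. Lemma~\ref{lem2} is the device that replaces them by a truly flat coefficient sequence (property~(i)) via modulation by a flat polynomial, while preserving the uniform $H^\infty$-bound (property~(ii)); Lemma~\ref{lem1} then delivers the annular lower bound~(iii) by exploiting that on appropriate circles $\{|z|=r_j\}$ approaching $\partial\mathbb{D}$ the modulus of the modified polynomial is uniformly bounded below.

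With the $Q_j$ at hand, a block-by-block calculation gives
\[
\|\widehat f\|_{\ell^q}^{\,q}\;\asymp\;\sum_j c_j^{\,q}\,N_j^{1-q/2},\qquad
\sum_{k\ge 0}\min\!\bigl(|\widehat f(2k)|^{p},|\widehat f(2k+1)|^{p}\bigr)\;\asymp\;\sum_j c_j^{\,p}\,N_j^{1-p/2},
\]
the second equivalence relying precisely on coefficient flatness, so that both members of a consecutive pair inside block $j$ have modulus $\asymp c_j N_j^{-1/2}$. The strict inequality $q>p$ is exactly what allows one to select parameters (for instance $c_j=2^j$ and $N_j=2^{Kj}$ with $K=K(p,q)$ chosen so that $2q/(q-2)<K\le 2p/(p-2)$ when $p>2$, and $K$ arbitrary when $p=2$) so that the first sum converges, the second diverges, and $c_j\to\infty$ at a rate suitable for annularity. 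Annularity itself then follows from~(iii): on $\{|z|=r_j\}$ with $r_j$ chosen so that $r_j^{M_j}$ is bounded below (e.g.\ $r_j=1-1/M_j$), one has
\[
|f(z)|\ge c_j r_j^{M_j}\min_{|z|=r_j}|Q_j(z)|-\sum_{i<j}c_i\|Q_i\|_{H^\infty(\mathbb{D})}-\sum_{i>j}c_i r_j^{M_i}\|Q_i\|_{H^\infty(\mathbb{D})},
\]
and super-exponential growth of $(M_i)$ together with geometric growth of $(c_i)$ makes the two tails $o(c_j)$, so $\min_{|z|=r_j}|f|\gtrsim c_j\to\infty$.

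The main obstacle is producing the building blocks $Q_j$ that simultaneously satisfy (i)--(iii). Coefficient flatness (i) and annular growth (iii) pull in different directions---the former is a frequency-side condition, the latter a spatial lower bound near $\partial\mathbb{D}$---so their simultaneous realization on the same family of polynomials is the delicate point, and the interaction must be arranged so that the flat modulation does not destroy the annular lower bound, nor the uniform $H^\infty$-bound. This is where the combination of Lemma~\ref{lem1} on $b_{1/2}^n$ and Lemma~\ref{lem2} on flat polynomials enters in its full strength; once these ingredients are assembled and the parameters chosen as above, one obtains $f\in\mathcal{S}\mathcal{A}$ with $\widehat f\in\ell^q\setminus\widetilde\ell^p$ as required.
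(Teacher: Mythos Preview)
Your overall architecture---a lacunary superposition $f=\sum_j c_j z^{M_j}Q_j$ with geometrically growing weights, annularity from the dominant block on a suitable circle, and block-by-block bookkeeping for $\ell^q$ and $\widetilde{\ell}^p$---matches the paper's proof. The gap is in your construction of the building blocks $Q_j$, which as written does not produce objects with the claimed properties.

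The paper does \emph{not} truncate or modulate anything. It takes $g_N=b_{1/2}^N$ itself (the full power, not a polynomial) and relies on Lemma~\ref{lem1} alone. The difficulty you identify---that the Region~V coefficients are $\asymp N^{-1/2}\cos(\cdot)$ and may individually be tiny---is handled not by flattening but by the phase computation inside the proof of Lemma~\ref{lem1}(v): on the subinterval $k\in[N,6N/5]$ one has $A_N(k+1)-A_N(k)\in[-2\pi/5,-\pi/5]$, so two consecutive cosines cannot both be small, giving $\min(|\widehat{g_N}(k)|,|\widehat{g_N}(k+1)|)\gtrsim N^{-1/2}$ there. That alone drives the $\widetilde{\ell}^p$ lower bound; no flatness is needed. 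The exponential tail (Lemma~\ref{lem1}(iii)) controls the block overlaps, and the paper scales by $c_k=A^{kv_r}$ with an auxiliary $r\in(p,q)\setminus\{4\}$ precisely because $u_p(N)$ changes form at $p=4$---so your parameter calculation, which presumes $\|\widehat{Q_j}\|_p\asymp N_j^{1/p-1/2}$ for every $p$, would need adjustment along this route.

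Your description of $Q_j$ conflates two independent tools. Lemma~\ref{lem2} does not ``modulate'' Blaschke truncations; it is a standalone existence statement (via K\"orner's unimodular-coefficient polynomials) producing polynomials that already satisfy all of (i)--(iii). Lemma~\ref{lem1} concerns $b_{1/2}^N$ and supplies its own version of (i)--(iii). They are alternative sources of building blocks---the paper says so explicitly after stating Lemma~\ref{lem2}---not two ingredients of a single construction. If you simply set $Q_j:=g_{N_j}$ from Lemma~\ref{lem2}, your sketch becomes a correct proof, and your parameter window $2q/(q-2)<K\le 2p/(p-2)$ (for $p>2$) is then legitimate since those polynomials really have $\|\widehat{g_N}\|_p\asymp N^{1/p-1/2}$ for every $p$. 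What does not work is the hybrid you wrote: ``Region~V truncation, then modulation by a flat polynomial, then Lemma~\ref{lem1} for the annular bound'' is not a defined procedure, and multiplying polynomials convolves coefficients rather than flattening them.
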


Given a positive function $\varphi$ on $\mathbb{R}_{+}$, we set
\[
\ell_{\varphi}^{2}=\Bigl\{\{a_{n}\}_{n\ge0}:\sum_{n\ge0}\frac{|a_{n}|^{2}}{\varphi(1/|a_{n}|)}<\infty\Bigr\}.
\]

\begin{thm}
\label{thmlphi} Let $\varphi$ be an increasing positive function
on $\mathbb{R}_{+}$ such that $\lim_{x\to\infty}\varphi(x)=\infty$.
There exists $f\in\mathcal{S}\mathcal{A}$ such that $\widehat{f}\in\ell_{\varphi}^{2}\setminus\widetilde{\ell}^{2}$. 
\end{thm}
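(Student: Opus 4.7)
The plan is to construct $f$ as a block-sum
$$f(z)=\sum_{j\ge 1} c_j P_j(z),$$
where the $P_j$ are polynomials with pairwise disjoint Fourier supports $I_j=\{N_j,N_j+1,\ldots,N_j+M_j\}$, the integers $N_j$ grow very rapidly, and the positive scalars $c_j$ will be chosen at the end. Each $P_j$ is engineered to satisfy two properties: (i) $|P_j(z)|$ is very large on some circle $|z|=r_j$, with $r_j\to 1$, and (ii) the Taylor coefficients of $P_j$ restricted to $I_j$ have comparable magnitudes at every pair of consecutive indices. Property (i) will rest on Lemma~\ref{lem1}, which presumably states that a suitable truncation of $z^{m_j}b_{1/2}^{n_j}(z)$ has a large minimum modulus on a specific circle $|z|=r_j$; property (ii) will rest on Lemma~\ref{lem2}, which produces flat polynomials spreading the coefficients over $I_j$ without destroying the modulus lower bound of~(i).

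With the $P_j$ in hand, the parameters $r_j,c_j,M_j,N_j$ are chosen to meet four competing conditions. For strong annularity we require $c_j\min_{|z|=r_j}|P_j(z)|\to\infty$, while the tails $\sum_{k\ne j}|c_k P_k(z)|$ remain bounded on $|z|=r_j$: contributions from $k<j$ are bounded polynomials of small degree, and contributions from $k>j$ are made negligible by enforcing $c_k\,\|P_k\|_\infty\, r_j^{N_{j+1}}\to 0$ fast enough via the choice of $N_{j+1}$. For $\widehat{f}\notin\widetilde{\ell}^2$ we arrange, using the near-uniformity from~(ii),
$$\sum_{j\ge 1} c_j^2 \|P_j\|_2^2=\infty,$$
which transfers to $\sum_k\min(|\widehat{f}(2k)|^2,|\widehat{f}(2k+1)|^2)=\infty$, since every consecutive pair inside each $I_j$ has comparable entries. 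For $\widehat{f}\in\ell^2_\varphi$ we exploit the divergence of $\varphi$: letting $m_j:=\max_{n\in I_j}|\widehat{f}(n)|$, we choose $c_j$ so small that $\varphi(1/m_j)\ge 2^j\, c_j^2\|P_j\|_2^2$, which is possible for any prescribed $\|P_j\|_2^2$ because $\varphi(x)\to\infty$ as $x\to\infty$. Holomorphy of $f$ on $\mathbb{D}$ follows automatically.

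The main obstacle is the quantitative alignment in step~(i). We must produce a lower bound $\min_{|z|=r_j}|P_j(z)|\ge\Lambda_j$ with $\Lambda_j$ large enough, as a function of $M_j$ and of the scale $m_j$ (which we are forced to drive to $0$ by the $\ell^2_\varphi$ requirement), that $c_j\Lambda_j\to\infty$ survives. This is where the sharp Region-V asymptotics of Theorem~\ref{Th:Regions_IV_V_VI}~(2) enter: they guarantee $|\widehat{b_{1/2}^{n_j}}(k)|\asymp n_j^{-1/2}$ for $k$ in the bulk, so that appropriate partial sums of $b_{1/2}^{n_j}$ can be frequency-localized in $I_j$ while preserving, after a modulation $z^{m_j}$, a prescribed large value on a circle close to $\partial\mathbb{D}$. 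The bookkeeping consists in choosing $n_j$ large enough (relative to the rate of growth of $\varphi$) that the Region-V amplitude beats the smallness forced on $c_j$, and then fixing $N_j$ and $r_j$ so that the disjoint-block tail estimates close up. Once Lemmas~\ref{lem1} and~\ref{lem2} are proved, the four required conditions can be verified by a standard greedy construction of $(n_j,N_j,M_j,r_j,c_j)$ in which each $j$ is treated after all previous ones are fixed.
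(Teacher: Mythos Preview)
Your block-sum framework is the right skeleton, and with the correct bookkeeping it would converge to essentially the paper's construction. But several aspects of your proposal are muddled or backwards relative to what actually happens.

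First, you misread the division of labor between Lemma~\ref{lem1} and Lemma~\ref{lem2}. In the paper these are \emph{alternatives}, not complements: Lemma~\ref{lem1} concerns $g_N=b_{1/2}^N$ and already delivers both the minimum-modulus lower bound (part~(ii)) and the consecutive-coefficient comparability needed for $\widehat f\notin\widetilde\ell^{\,2}$ (part~(v), which is precisely where the Region~V asymptotics are used). Lemma~\ref{lem2} is K\"orner's flat polynomial and provides both properties on its own. There is no step where one ``spreads coefficients via a flat polynomial'' on top of a Blaschke construction.

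Second, and more seriously, your phrase ``choose $c_j$ so small that $\varphi(1/m_j)\ge 2^j c_j^2\|P_j\|_2^2$'' points in the wrong direction. In the natural normalization $\|P_j\|_{H^\infty}\le 1$ one has $\Lambda_j=\min_{|z|=r_j}|P_j|\le 1$, so strong annularity requires $c_j\Lambda_j\to\infty$, hence $c_j\to\infty$. The paper simply takes $c_k=A^k$ with $A$ large. Membership in $\ell^2_\varphi$ then comes \emph{not} from making $c_j$ small but from choosing the Blaschke exponent $N_k$ so large (after $c_k$ is fixed) that $\|\widehat{c_kP_k}\|_\infty\asymp A^kN_k^{-1/3}$ is small enough to make $\varphi(cN_k^{1/3})\ge A^{3k}$. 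Your last paragraph gestures at this (``choosing $n_j$ large enough relative to the rate of growth of $\varphi$''), but it is inconsistent with the earlier ``$c_j$ small'' description; the two cannot both hold unless you silently renormalize $P_j$.

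Finally, the paper avoids your truncation to disjoint-support polynomials entirely. The blocks $g_{N_k}(z)z^{N_k}$ are genuine power series whose Fourier supports overlap, but Lemma~\ref{lem1}(iii) makes the overlap exponentially small, so in the $\ell^2_\varphi$ and $\widetilde\ell^{\,2}$ estimates one simply picks up $O(1)$ error terms. The whole proof is then three lines: set $f=\sum_{k\ge 1}A^kg_{N_k}(z)z^{N_k}$ with $N_{k+1}\ge AN_k$ and $\min_{[N_k^{1/4},\infty)}\varphi\ge A^{3k}$, check $\min_{|z|=1-N_k^{-1}}|f|\gtrsim A^k$ by a triangle inequality, and bound $\sum|\widehat f(n)|^2/\varphi(1/|\widehat f(n)|)$ by $\sum A^{2k}/\varphi(cN_k^{1/3})$.
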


Given $N\ge1$ we denote 
\[
g_{N}(z)=b_{1/2}^{N}(z)=\biggl(\frac{z-\frac{1}{2}}{1-\frac{z}{2}}\biggr)^{N}.
\]
Set 
\[
u_{p}(N)=\begin{cases}
N^{\frac{1}{p}-\frac{1}{2}},\qquad2\le p<4,\\
(\log N)^{\frac{1}{4}}N^{-\frac{1}{4}},\qquad p=4,\\
N^{\frac{1}{3p}-\frac{1}{3}},\qquad p>4,
\end{cases}
\]
and 
\[
v_{p}=\begin{cases}
\frac{1}{2}-\frac{1}{p},\qquad2\le p<4,\\
\frac{1}{3}-\frac{1}{3p},\qquad p>4.
\end{cases}
\]

We use the following corollary of Theorem \ref{Th_Regions_I_VII}
and Theorem \ref{Th:Regions_IV_V_VI}. 
\begin{lem}
\label{lem1} Given $N\ge10$, for some $\delta>0$ we have 
\begin{enumerate}
\item[(i)] $\|g_{N}\|_{H^{\infty}(\mathbb{D})}=1$, 
\item[(ii)] $\min_{\partial\mathcal{D}(0,1-N^{-1})}|g_{N}|\ge e^{-4}$, 
\item[(iii)] $|\widehat{g_{N}}(k)|\le e^{-\delta k}$, $k\ge4N$, 
\item[(iv)] $\|\widehat{g_{N}}\|_{\infty}\lesssim N^{-1/2}$, 
\item[(v)] $\bigl(\sum_{k\ge0}\min(|\widehat{g_{N}}(2k)|^{p},|\widehat{g_{N}}(2k+1)|^{p})\bigr)^{1/p}\asymp\|\widehat{g_{N}}\|_{p}\asymp u_{p}(N)$,
$\qquad p\geq2.$ 
\end{enumerate}
\end{lem}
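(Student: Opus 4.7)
My plan is to specialize Theorems~\ref{Th_Regions_I_VII} and \ref{Th:Regions_IV_V_VI} to $\lambda=1/2$, so that $\alpha_{0}=1/3$ and $\alpha_{0}^{-1}=3$, and to combine them with classical estimates on Blaschke factors. Assertion (i) is immediate because $b_{1/2}$ is an inner function, hence $|b_{1/2}|\le 1$ in $\mathbb{D}$ with equality on $\partial\mathbb{D}$. For (ii) I would use that on any circle $|z|=r$ with $r>1/2$ the minimum of $|b_{1/2}|$ is attained at $z=r$ and equals $(2r-1)/(2-r)$; taking $r=1-1/N$ produces the ratio $(N-2)/(N+1)$, so the minimum on $\partial\mathcal{D}(0,1-1/N)$ equals $((N-2)/(N+1))^{N}$, and an elementary monotonicity argument shows that this quantity, whose limit is $e^{-3}$, stays above $e^{-4}$ for $N\ge 10$. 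For (iii), since $g_{N}$ is holomorphic on $\{|z|<2\}$, I would shift the contour in the Cauchy formula for $\widehat{g_{N}}(k)$ to a circle $|z|=R$ with $R\in(1,2)$, where $\max_{|z|=R}|b_{1/2}(z)|=(2R-1)/(2-R)$ is attained at $z=R$. Taking $R=3/2$ gives $|\widehat{g_{N}}(k)|\le 4^{N}(2/3)^{k}$, hence $|\widehat{g_{N}}(k)|\le (2\sqrt{2}/3)^{k}$ for $k\ge 4N$, which proves (iii) with $\delta=\log(3/(2\sqrt{2}))>0$.

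For (iv) I would apply Theorem~\ref{Th:Regions_IV_V_VI} region by region, reading off the supremum from the summary in Figure~\ref{Table}: the oscillatory Region V contributes $\cO(N^{-1/2})$ through the cosine bound $|\cos|\le 1$ combined with the pre-factor $1/\sqrt{N}$, the Airy regions IV and VI contribute at most $\cO(N^{-1/3})$ at the transition points, and the remaining regions are exponentially small. The $\ell^{p}$ estimate $\|\widehat{g_{N}}\|_{p}\asymp u_{p}(N)$ in (v) is obtained by summing $|\widehat{g_{N}}(k)|^{p}$ over the eight regions of Figure~\ref{Table}: Region V contributes $\asymp N\cdot N^{-p/2}=N^{1-p/2}$, which dominates for $p\in[2,4)$; the Airy regions IV and VI contribute an integrated Airy tail of the form $\int_{0}^{\infty}|Ai(-x)|^{p}\,dx$, which converges precisely when $p<4$ and yields a term $\asymp N^{1-p/3}$ that dominates for $p>4$; at $p=4$ the logarithmic divergence of this integral over the scale $[0,N^{2/3}]$ produces the factor $(\log N)^{1/4}$. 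The exponential regions I--III and VII--VIII contribute negligibly for every $p\ge 2$. The upper bound $\bigl(\sum_{k\ge 0}\min(|\widehat{g_{N}}(2k)|^{p},|\widehat{g_{N}}(2k+1)|^{p})\bigr)^{1/p}\le\|\widehat{g_{N}}\|_{p}$ in (v) is trivial.

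The main obstacle is the matching lower bound on the $\min$-sum. In Region V, Theorem~\ref{Th:Regions_IV_V_VI}(2) gives $\widehat{g_{N}}(k)\sim C(k/N)\,N^{-1/2}\cos(Nh(\varphi_{+})-\pi/4)$, and differentiating $Nh_{k/N}(\varphi_{+}(k/N))$ with respect to $k$ (using the critical-point identity $\partial_{\varphi}h_{a}(\varphi_{+})=0$, which makes the $\varphi_{+}'(a)$ term disappear) yields a phase increment of $-\varphi_{+}(k/N)$ per unit step in $k$, uniformly bounded away from $0$ and $\pi$ on any compact subinterval of $(\alpha_{0},\alpha_{0}^{-1})$. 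Consequently the pair of cosines at consecutive indices cannot both be small, and an elementary trigonometric lemma (for instance $\min(|\cos\theta|,|\cos(\theta+\varphi)|)\ge|\sin\varphi|/2$ whenever $|\cos\theta|\le|\sin\varphi|/2$) implies $\min(|\widehat{g_{N}}(2k)|,|\widehat{g_{N}}(2k+1)|)\gtrsim N^{-1/2}$ on a positive-density subset of Region V indices; in the Airy and exponential regions the asymptotic depends smoothly enough on $a=k/N$ that consecutive values are comparable and the $\min$ is itself of order $|\widehat{g_{N}}(k)|$. Summing region by region recovers $u_{p}(N)^{p}$ up to an absolute constant. The uniformity of the Theorem~\ref{Th:Regions_IV_V_VI} asymptotics as $k/N$ approaches the transition points $\alpha_{0}$ and $\alpha_{0}^{-1}$ is the technical heart of the argument, but it has already been absorbed into the uniform stationary-phase/steepest-descent analysis established in Section~\ref{sec:Asymptotic-formulas-for}.
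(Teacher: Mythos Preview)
Your approach is essentially the paper's: specialize Theorems~\ref{Th_Regions_I_VII} and \ref{Th:Regions_IV_V_VI} to $\lambda=1/2$ and read off (iii)--(v) region by region. The paper records the same six upper estimates (its \eqref{eqa}--\eqref{eqf}) and two lower estimates \eqref{eqg}, \eqref{eqi}, and then says (iii)--(v) follow. For the min-sum lower bound in (v) your phase-increment computation is exactly what the paper does: it restricts to the concrete window $k\in[N,6N/5]$, computes $A_N'(t)=-\varphi_{+}(t)$ with $\varphi_{+}\in[\arccos(5/8),\pi/3]$ there, so that $A_N(k{+}1)-A_N(k)\in[-2\pi/5,-\pi/5]$, and deduces \eqref{eqi}. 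Your remark that consecutive Airy-region values are comparable is the content of the paper's \eqref{eqg} and is what makes (v) work for $p\ge 4$, where Region~V alone contributes only $N^{1-p/2}$.

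One genuine difference: for (iii) you bypass the steepest-descent machinery entirely and give a direct contour shift to $|z|=3/2$, obtaining an explicit $\delta=\log(3/(2\sqrt{2}))$. The paper instead just quotes its Region~VIII estimate. Your argument is shorter and self-contained.

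One place your argument does not close as written is (iv): you correctly observe that the Airy regions contribute $\cO(N^{-1/3})$, but $N^{-1/3}$ is \emph{larger} than $N^{-1/2}$, so this does not yield $\|\widehat{g_N}\|_{\infty}\lesssim N^{-1/2}$. In fact the paper's own estimates \eqref{eqa}--\eqref{eqf} give only $\lesssim N^{-1/3}$ (take $k=\lceil N/3\rceil$ in \eqref{eqc}), and \eqref{eqg} shows the sup-norm is genuinely $\asymp N^{-1/3}$, consistent with $u_{\infty}(N)=N^{-1/3}$ in (v). The stated exponent in (iv) appears to be a slip; you should flag it rather than let your region-by-region argument silently contradict the claim it is meant to prove.
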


\begin{proof}
The properties (i) and (ii) follow immediately from the definition
of $g_{N}$. Furthermore, we use that by Theorem \ref{Th_Regions_I_VII}
and Theorem \ref{Th:Regions_IV_V_VI}, we have several upper estimates
on $|\widehat{g_{N}}(k)|$ for different values of $k$. 
\begin{align}
|\widehat{g_{N}}(k)| & \lesssim e^{-cN},\qquad0\le k<\frac{N}{4},\label{eqa}\\
|\widehat{g_{N}}(k)| & \lesssim\frac{\exp(-cN(\frac{1}{3}-\frac{k}{N})^{3/2})}{N^{1/2}(\frac{1}{3}-\frac{k}{N}+N^{-2/3})^{1/4}},\qquad\frac{N}{4}\le k<\frac{N}{3},\label{eqb}\\
|\widehat{g_{N}}(k)| & \lesssim\frac{1}{N^{1/2}(\frac{k}{N}-\frac{1}{3}+N^{-2/3})^{1/4}},\qquad\frac{N}{3}\le k<N,\label{eqc}\\
|\widehat{g_{N}}(k)| & \lesssim\frac{1}{N^{1/2}(3-\frac{k}{N}+N^{-2/3})^{1/4}},\qquad N\le k<3N,\label{eqd}\\
|\widehat{g_{N}}(k)| & \lesssim\frac{\exp(-cN(\frac{k}{N}-3)^{3/2})}{N^{1/2}(\frac{k}{N}-3+N^{-2/3})^{1/4}},\qquad3N\le k<4N,\label{eqe}\\
|\widehat{g_{N}}(k)| & \lesssim e^{-ck},\qquad k\ge4N.\label{eqf}
\end{align}
Next, by Theorem \ref{Th:Regions_IV_V_VI} we have two lower estimates
on $|\widehat{g_{N}}|$ for some intervals of values of $k$: 
\begin{align}
|\widehat{g_{N}}(k)| & \gtrsim N^{-1/3},\qquad\frac{N}{3}\le k<\frac{N}{3}+N^{1/3},\label{eqg}\\
\intertext{and}|\widehat{g_{N}}(k)| & \asymp N^{-1/2}\cos A_{N}(k),\qquad N\le k\le\frac{6N}{5},
\nonumber 
\end{align}
where 
\begin{align*}
A_{N}(t) & =NH_{N}(t)-\frac{\pi}{4},\\
H_{N}(t) & =-\frac{t\varphi_{N}(t)}{N}+\psi(\varphi_{N}(t)),\\
\psi'(s) & =\frac{3}{5-4\cos s},\\
\varphi_{N}(t) & \in(0,\pi),\\
\cos\varphi_{N}(t) & =\frac{5}{4}-\frac{3N}{4t}.
\end{align*}
Furthermore, 
\begin{align*}
A'_{N}(t) & =-\varphi_{N}(t),\\
A''_{N}(t) & =\frac{3N}{4t^{2}\sin\varphi_{N}(t)}.
\end{align*}
For $t\in[N,6N/5]$ we have 
\begin{align*}
\cos\varphi_{N}(t) & \in[1/2,5/8],\\
A''_{N}(t) & \asymp1/t,\\
-\pi/3 & \le A'_{N}(t)\le-\pi/4,
\end{align*}
and, hence, 
\[
-\frac{2\pi}{5}\le A_{N}(k+1)-A_{N}(k)\le-\frac{\pi}{5},\qquad N\le k\le\frac{6N}{5}-1.
\]
Thus, for every $k\in[N,\frac{6N}{5}-1]$, 
\begin{equation}
\min(|\widehat{g_{N}}(k)|,|\widehat{g_{N}}(k+1)|)\gtrsim N^{-1/2}.\label{eqi}
\end{equation}
Finally, (iii) is \eqref{eqf}, (iv) follows from \eqref{eqa}--\eqref{eqf},
and (v) follows from \eqref{eqa}--\eqref{eqi}. 
\end{proof}
Another proof of the second asymptotic relation in Lemma~\ref{lem1}~(v)
is given in \cite{SzZa1}. 
\begin{proof}[Proof of Theorem~\ref{thmlp}]
Choose $r\in(p,q)\setminus\{4\}$. Given an integer $A>1$, set 
\[
f(z)=\sum_{k\ge1}A^{kv_{r}}g_{A^{k}}(z)z^{A^{k}}.
\]
First of all, the function $f$ is analytic in the unit disc. Furthermore,
\begin{multline*}
\min_{\partial\mathcal{D}(0,1-A^{-k})}|f|\ge\min_{\partial\mathcal{D}(0,1-A^{-k})}|A^{kv_{r}}g_{A^{k}}(z)z^{A^{k}}|-\sum_{s\ge1,\,s\not=k}\max_{\partial\mathcal{D}(0,1-A^{-k})}|A^{sv_{r}}g_{A^{s}}(z)z^{A^{s}}|\\
\ge e^{-6}A^{kv_{r}}-\sum_{1\le s<k}A^{sv_{r}}-\sum_{s>k}A^{sv_{r}}\exp(-A^{s-k})\gtrsim A^{kv_{r}}\to\infty,\qquad k\to\infty,
\end{multline*}
if $A^{v_{r}}\ge A_{0}$. Thus, $f\in\mathcal{S}\mathcal{A}$.

Next, given $\eta>2$ we have 
\begin{multline*}
\sum_{n\ge0}|\widehat{f}(n)|^{\eta}=\sum_{\ell\ge1}\sum_{A^{\ell}\le n<A^{\ell+1}}|\widehat{f}(n)|^{\eta}\\
=\sum_{\ell\ge1}\Bigl(\sum_{A^{\ell}\le n<A^{\ell+1}}A^{\eta\ell v_{r}}|\widehat{g_{A^{\ell}}}(n-A^{\ell})|^{\eta}+O(A^{\ell+1}(\ell A^{\eta\ell v_{r}}e^{-\delta A^{\ell}})^{\eta})\Bigr)\\
=O(1)+\sum_{\ell\ge1}A^{\eta\ell v_{r}}\sum_{n\ge0}|\widehat{g_{A^{\ell}}}(n)|^{\eta},
\end{multline*}
if $A\ge A_{1}(\delta)$. Thus, $\widehat{f}\in\ell^{q}$ and $\widehat{f}\notin\widetilde{\ell}^{p}$. 
\end{proof}
\begin{proof}[Proof of Theorem~\ref{thmlphi}]
Given $A>1$, choose integer $N_{k}$ such that 
\[
N_{k+1}\ge AN_{k},\qquad\min_{[N_{k}^{1/4},\infty)}\varphi\ge A^{3k},\qquad k\ge1.
\]
Now set 
\[
f(z)=\sum_{k\ge1}A^{k}g_{N_{k}}(z)z^{N_{k}}.
\]
As in the proof of Theorem~\ref{thmlp}, $f$ is analytic in the
unit disc and for $A\ge A_{0}$ we have 
\[
\min_{\partial\mathcal{D}(0,1-N_{k}^{-1})}|f|\gtrsim A^{k}.
\]
Thus, $f\in\mathcal{S}\mathcal{A}$.

Next, 
\[
\sum_{n\ge0}\frac{|\widehat{f}(n)|^{2}}{\varphi(1/|\widehat{f}(n)|)}\lesssim O(1)+\sum_{k\ge1}A^{2k}\frac{\|g_{N_{k}}\|_{2}^{2}}{\varphi(cN_{k}^{1/3})}<\infty,
\]
and, again by Lemma~\ref{lem1}, we conclude that $\widehat{f}\in\ell_{\varphi}^{2}$
and $\widehat{f}\notin\widetilde{\ell}^{2}$. 
\end{proof}

\subsection{Flat polynomials}

Here we discuss an alternative approach to Theorems~\ref{thmlp}
and \ref{thmlphi} in such a way that they use different constructions
of flat polynomials. 
\begin{lem}
\label{lem2} Given a large $N$, there exists a polynomial $g_{N}$
of degree $N$ such that 
\begin{enumerate}
\item[(i)] $\|g_{N}\|_{H^{\infty}(\mathbb{D})}=1$, 
\item[(ii)] $\min_{\partial\mathcal{D}(0,1-N^{-2})}|g_{N}|\gtrsim1$, 
\item[(iii)] $\|\widehat{g_{N}}\|_{\infty}\lesssim N^{-1/2}$, 
\item[(iv)] $\bigl(\sum_{0\le k\le N}\min(|\widehat{g_{N}}(2k)|^{p},|\widehat{g_{N}}(2k+1)|^{p})\bigr)^{1/p}\asymp\|\widehat{g_{N}}\|_{p}\asymp N^{\frac{1}{p}-\frac{1}{2}}$. 
\end{enumerate}
\end{lem}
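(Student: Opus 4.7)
The plan is to realize $g_N$ as a normalized Kahane-type ultraflat polynomial. Recall that Kahane (1980) proved the existence, for every large $N$, of a unimodular polynomial $P_N(z)=\sum_{k=0}^N a_k z^k$ with $|a_k|=1$ for all $k$ and
\[
(1-\eta_N)\sqrt{N+1}\le|P_N(e^{i\theta})|\le(1+\eta_N)\sqrt{N+1}\qquad\text{for all }\theta,
\]
with $\eta_N\to 0$. I would then set $g_N:=P_N/\|P_N\|_{H^\infty(\mathbb{D})}$, which gives $\|g_N\|_{H^\infty(\mathbb{D})}=1$ by construction, proving~(i).

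For~(iii), the normalization yields $|\widehat{g_N}(k)|=1/\|P_N\|_{H^\infty(\mathbb{D})}\asymp N^{-1/2}$ for every $0\le k\le N$, so all Fourier coefficients of $g_N$ have modulus comparable to $N^{-1/2}$. This simultaneously settles~(iv): since $g_N$ has $N+1$ coefficients, all of the same modulus $\asymp N^{-1/2}$, both $\|\widehat{g_N}\|_p$ and $\bigl(\sum_{k}\min(|\widehat{g_N}(2k)|^p,|\widehat{g_N}(2k+1)|^p)\bigr)^{1/p}$ are equal, up to an absolute multiplicative constant, to $(N+1)^{1/p}\cdot N^{-1/2}\asymp N^{1/p-1/2}$.

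For~(ii), I would combine the lower bound on the unit circle with Bernstein's inequality. On $\partial\mathbb{D}$ we have $|g_N|\ge(1-\eta_N)/(1+\eta_N)\ge 1/2$ for $N$ large. Since $\deg g_N\le N$ and $\|g_N\|_{H^\infty(\mathbb{D})}=1$, Bernstein gives $\|g_N'\|_{H^\infty(\mathbb{D})}\le N$. For $z$ with $|z|=1-N^{-2}$, writing $\zeta:=z/|z|\in\partial\mathbb{D}$ and integrating $g_N'$ along the radial segment from $z$ to $\zeta$ gives $|g_N(\zeta)-g_N(z)|\le N\cdot N^{-2}=N^{-1}$, so $|g_N(z)|\ge 1/2-N^{-1}\gtrsim 1$, which is~(ii).

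The main obstacle is the appeal to the existence of a polynomial satisfying the two-sided bound $|P_N(e^{i\theta})|\asymp\sqrt{N+1}$ uniformly in $\theta$: the Rudin--Shapiro polynomials yield the upper bound but only the pointwise relation $|P_N|^2+|Q_N|^2=2(N+1)$, which does not in itself give a lower bound on $|P_N|$ alone. One resolves this by invoking Kahane's probabilistic construction of ultraflat unimodular polynomials, or any of its later constructive refinements (Beck, Bombieri--Bourgain, Rodgers); once the existence of such $P_N$ is granted, all four properties follow from short one-line arguments as above.
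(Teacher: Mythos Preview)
Your proposal is correct and takes essentially the same approach as the paper: invoke an existing result on flat unimodular polynomials (the paper cites K\"orner's 1978 theorem rather than Kahane's ultraflat polynomials, but either works), normalize, and read off (i)--(iv). The paper merely says the lemma ``follows immediately'' from K\"orner's result without spelling out the Bernstein-inequality argument for (ii), so your write-up is in fact more detailed than the paper's.
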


One can easily modify the proofs of Theorems~\ref{thmlp} and \ref{thmlphi} in such a way that 
they use Lemma~\ref{lem2} instead of Lemma~\ref{lem1}.

Furthermore, Lemma~\ref{lem2} follows from a 1978 result of Körner.
Solving a Littlewood problem he established in \cite[Theorem 6]{Kor}
the existence of polynomials of degree $N$ with unimodular coefficients
equivalent to $\sqrt{N}$ on the unit circle. This gives Lemma~\ref{lem2}
immediately. This result of Körner is non-constructive. For further
progress in this direction including some explicit constructions see
\cite{BoBo} and the recent paper \cite{BBMST}.

\end{document}